\DeclareMathOperator{\cov}{\mathbb{C}ov}
\DeclareMathOperator{\var}{\mathbb{V}ar}
\def\argmin {{\rm argmin}}
\definecolor{c20}{rgb}{0.,0.7,0.}
\definecolor{c30}{rgb}{0.,0.,1.}
\definecolor{c40}{rgb}{1,0.1,0.7}
\definecolor{c50}{rgb}{1,0,0}
\definecolor{c60}{rgb}{1,0.9,0.1}
\def\x{\vk{x}}
\newcommand{\abs}[1]{\left\lvert #1 \right\rvert}
\newcommand{\sprod}[1]{\langle#1\rangle}
\newcommand{\E}[1]{\mathbb{E}\left\{ #1\right\}}
\newcommand{\sgn}[1]{{\rm sgn}\left( #1\right)}
\newcommand{\pk}[1]{\mathbb{P} \left\{ #1 \right \} }
\newcommand{\R}{\mathbb{R}}
\newcommand{\N}{\mathbb{N}}
\newcommand{\ldot}{,\ldots,}
\newcommand{\ep}{\varepsilon}
\renewcommand{\tilde}{\widetilde}
\renewcommand{\hat}{\widehat}
\newcommand{\limit}[1]{\lim_{#1 \to \infty}}
\newcommand{\BQN}{\begin{eqnarray}}
\newcommand{\EQN}{\end{eqnarray}}
\newcommand{\BQNY}{\begin{eqnarray*}}
\newcommand{\EQNY}{\end{eqnarray*}}
\newcommand{\BS}{\begin{sat}}
\newcommand{\ES}{\end{sat}}
\newcommand{\BT}{\begin{theo}}
\newcommand{\ET}{\end{theo}}
\newcommand{\BK}{\begin{korr}}
\newcommand{\EK}{\end{korr}}
\newcommand{\BD}{\begin{de}}
\newcommand{\ED}{\end{de}}
\newcommand{\BIT}{\begin{itemize}}
\newcommand{\EIT}{\end{itemize}}
\newcommand{\BDI}{\begin{description}}
\newcommand{\EDI}{\end{description}}
\newcommand{\BRM}{\begin{remark}}
\newcommand{\ERM}{\end{remark}}
\newcommand{\BEL}{\begin{lem}}
\newcommand{\EEL}{\end{lem}}
\newtheorem{theo}{Theorem}[section]
\newtheorem{sat}[theo]{Proposition}
\newtheorem{de}[theo]{Definition}
\newtheorem{lem}[theo]{Lemma}
\newtheorem{example}[theo]{Example}
\newtheorem{korr}[theo]{Corollary}
\newtheorem{remark}[theo]{Remark}
\newcommand{\nelem}[1]{{Lemma \ref{#1}}}
\newcommand{\COM}[1]{}
\def\td{\text{\rm d}}
\def\td{d}
\def\IF{\infty}
\newcommand{\kb}[1]{\boldsymbol{#1}}
\newcommand{\vk}[1]{\kb{#1}}
\def\IF{\infty}
\def\bqny#1{\begin{eqnarray*} #1 \end{eqnarray*}}
\def\bqn#1{\begin{eqnarray} #1 \end{eqnarray}}
\begin{document}

\title{Simultaneous ruin probability for multivariate Gaussian risk model }

\author{Krzysztof Bisewski}
\address{Krzysztof Bisewski, Department of Actuarial Science, 
	University of Lausanne,\\
	UNIL-Dorigny, 1015 Lausanne, Switzerland
}
\email{Krzysztof.Bisewski@unil.ch}

\author{Krzysztof D\c{e}bicki}
\address{Krzysztof D\c{e}bicki, Mathematical Institute, University of Wroc\l aw, pl. Grunwaldzki 2/4, 50-384 Wroc\l aw, Poland}
\email{Krzysztof.Debicki@math.uni.wroc.pl}

\author{Nikolai Kriukov}
\address{Nikolai Kriukov, Department of Actuarial Science, 
University of Lausanne,\\
UNIL-Dorigny, 1015 Lausanne, Switzerland
}
\email{Nikolai.Kriukov@unil.ch}

\bigskip

\date{\today}
\maketitle

{\bf Abstract:}
Let
$\vk Z(t)=(Z_1(t) \ldot Z_d(t))^\top , t \in \R$
where $Z_i(t), t\in \R$, $i=1,...,d$ are
mutually independent centered Gaussian
processes with continuous sample paths a.s. and stationary increments.
For $\vk X(t)= A \vk Z(t),\ t\in \R$, where
$A$ is a nonsingular $d\times d$ real-valued matrix,
$\vk u, \vk c\in\R^d$ and $T>0$
we derive tight bounds for
\[
\pk{\exists_{t\in [0,T]}: \cap_{i=1}^d \{ X_i(t)- c_i t > u_i\}}
\]
and find exact asymptotics
as $(u_1,...,u_d)^{\top}= (u a_1,..., ua_d)^\top$ and $u\to\infty$.

{\bf Key Words:} Multivariate Gaussian risk model; Simultaneous ruin probability.

{\bf AMS Classification: Primary 60G15; secondary 60G70.}

\section{Introduction}
Let
$\vk Z(t)=(Z_1(t) \ldot Z_d(t))^\top , t \in \R$ be a $d$-dimensional
Gaussian process, where  $Z_i(t), t\in \R$, $i=1,...,d$ are mutually independent centered Gaussian
processes with continuous sample paths a.s. and stationary increments.
For $\vk u, \vk c\in\R^d$ and $T>0$ we
consider
\begin{eqnarray*}
p_T(\vk u)&:=&
\pk{\exists_{t\in [0,T]}: \vk X(t)- \vk c t > \vk u} \\
&=&
\pk{\exists_{t\in [0,T]}: \cap_{i=1}^d \{ X_i(t)- c_i t > u_i\}},
\end{eqnarray*}
where
$\vk X(t)= A \vk Z(t),$ 
with $A$ a nonsingular $d\times d$ real-valued matrix.

In the main result of this contribution, which is
Theorem \ref{th.exact}, we derive exact asymptotics of $p_T(\vk u)$ for $\vk u =u \vk a=(a_1 u,...,a_d u)^\top$, as $u\to\infty$,
where $\vk a \in \R^d\setminus(-\infty,0]^d$.
The core assumption that we work with is the so-called {\it Berman condition}
\[v_i(t) := \var(Z_i(t)) = o(t),\ {\rm as} \ t\to0\]
for $i=1,...,d$.
Interestingly, while in the 1-dimensional case, under Berman condition,
\[p_T( u)\sim \pk{  X(T)-  c T >  u}, \ {\rm as}\  u\to\infty,\]
(see
the seminal paper by Berman \cite{MR789369} and \cite{DeR02} for the non-centered case),
the vector-valued case considered in this manuscript leads to more
diverse scenarios that can be captured in the form
\[
p_T( u\vk a)\sim \mathcal C \cdot \pk{ \vk X(T)-  \vk c T >  u\vk a},
\]
as $u\to\infty$, where $\mathcal{C}\ge 1$ is a constant depending on the model parameters; see Eq.~(\ref{def:C}) below.

We note that, for
$\vk Z$ being a two-dimensional standard Brownian motion,
the asymptotic behavior of $p_T( u\vk a)$
was recently analyzed in \cite{DHM19}, where the strategy of the proof
was based on the independence of increments and self-similarity of Brownian motion. In general, Gaussian processes with stationary increments do not have these properties and thus the proof of the main result of this contribution
needs more subtle and refined analysis than the one used in \cite{DHM19}.
More precisely, the idea of the proof of Theorem \ref{th.exact} is based on two steps:
(i) showing that
\[
\lim_{L\to\infty}\lim_{u\to\infty}
\frac{p_T( u\vk a)}
{\pk{\exists_{t\in [T-Lu^{-2},T]}: \cap_{i=1}^d \{ X_i(t)- c_i t > u_i\}}}=1,
\]
and (ii) finding the exact asymptotics of the denominator
above.
In the first step, particularly precise analysis is needed
for the neighbourhood of the right end of the parameter set $[0,T-Lu^{-2}]$, see Lemma \ref{lem:gap}.

Complemantary to the exact asymptotics derived in Theorem \ref{th.exact},
in Theorem \ref{th.bound} we establish uniform upper and lower bounds for $p_T(\vk u)$.
This result extends recently derived bounds for $Z$ being a $d$-dimensional standard
Brownian motion \cite{KoW20, DHM19, DHK21}.

The quantity $p_T(\vk u)$ has a natural interpretation as
the {\it simultaneous ruin probability} in time horizon $[0,T]$
of an insurance portfolio represented by $d$ mutually dependent risk processes
$(R_i(t),...,R_d(t))^\top =\vk R(t,\vk u)$, where
$\vk R(t,\vk u)=\vk u-\vk X (t)+\vk c t,\  t\in\R$,
since
\[
p_T(\vk u)=
\pk{\exists_{ t\in [0,T]}: \vk R(t,\vk u)< \vk 0},
\]
where, for the $i$-th business line, $u_i$ is the initial capital, $X_i(t)$ is the accumulated clam size in time interval $[0,t]$
and $c_i$ is the premium rate.
In this context, our results complement  work
\cite{Ji18}, where the particular case $d=2, T=\infty$ with $X_2(t)=\sigma_2 X_1(t)$
where $X_1$ is a fractional Brownian motion,
was analyzed.
We refer to, e.g., \cite{FPP08,HuJ13,FPR17,Mic20} for recent works on
simultaneous ruin probability for L{\'e}vy processes and to
recently derived asymptotics for centered vector valued Gaussian processes;
see \cite{DHW20,HHJT15}.

Our findings cover two special cases that play important role in the literature on the Gaussian risk models,
i.e. fractional Brownian motion risk model and Gaussian integrated risk model; see Section \ref{s.main} for details.
We refer to \cite{Mic98, HuP99, HuP04, Deb02}
for the analysis of Gaussian risk models in $d=1$ dimensional setting.


\section{Notation}
We follow the notational convention of \cite{debicki2020extremes}.
All vectors in $\R^d$ are written in bold letters, for instance
$\vk b = (b_1, \ldots, b_d)^{\top}$, $\vk{0} = (0, \ldots, 0)^{\top}$ and $\vk{1} = (1, \ldots, 1)^{\top}$.
We follow the convention that $1$-dimensional vectors are vertical.
For two vectors $\vk x$ and $\vk y$, we write $\vk{x} > \vk {y}$ if $x_i > y_i$ for all $1 \leq i \leq d$.
For any $\vk{x}, \vk{y}\in\R^d$ we use $\langle\vk{x},\vk{y}\rangle$ for scalar product and
$\vk{x}\vk{y}$ for a component-wise product.

Given a real-valued matrix $A$ we shall write $A_{IJ}$ for the
submatrix of $A$ determined by keeping the rows and columns of $A$ with row indices in the non-empty set $I$
and column indices in the non-empty set $J$, respectively.
In our notation $\mathcal{I}_d$ is the $d\times d$ identity matrix and
${\rm diag}(\vk x)={\rm diag}(x_1,\ldots,x_d)$
stands for the diagonal matrix with entries $x_i$, $i=1,\ldots,d$ on the main diagonal, respectively.

Let in the sequel $\Sigma \in \R^{d \times d}$ be any positive definite matrix.
We write $\Sigma_{IJ}^{-1}:=\left(\Sigma_{IJ}\right)^{-1}$ for the inverse matrix
of $\Sigma_{IJ}$ whenever it exists.
For  $\vk a \in \R^d \setminus (-\IF, 0]^d $,  let $\Pi_\Sigma(\vk a)$
denotes the quadratic programming problem
\begin{equation}
\label{eq:QP}
\Pi_\Sigma(\vk{a})= \text{minimise $ \vk x^\top \Sigma^{-1} \x $ under the linear constraint } \vk{x} \ge \vk{a}.
\end{equation}
It is known that $\Pi_\Sigma(\vk a)$ has a unique solution $\tilde{\vk a} \ge \vk{a}$ and there exists a unique non-empty index set $I\subset \{1 \ldot d\}$ such that
\begin{equation}
\label{keyWW}
\tilde{\vk a}_I=\vk{a}_I,  \quad \tilde{\vk a}_J = \Sigma_{IJ}\Sigma_{II}^{-1}\vk{a}_I \vk
\ge  \vk{a}_J,  \quad \vk{\lambda}_I= \Sigma_{II}^{-1}\vk{a}_I> \vk{0}_I, \quad \vk{\lambda}_J= \vk{0}_J,
\end{equation}
where $\vk{\lambda} = \Sigma ^{-1} \tilde{\vk a}$ and $J= \{ 1 \ldot d\} \setminus I$. Additionally, we define $U := \{i\in J: \tilde a_i = a_i\}$. The coordinates $J$ (which can be empty) are responsible for dimension-reduction phenomena, while coordinates belonging to $I$ play essential role in the exact asymptotics. We refer to \nelem{lem:quadratic_programming} below for more details.

Throughout the manuscript, let $\Sigma(t)$ denote the variance matrix of process $\vk{X}$ at time $t\in[0,T]$, that is
$$\Sigma(t) := \E{\vk X(t)\vk X^\top(t)} = A\E{\vk Z(t){\vk Z}^\top(t)}A^\top = A~{\rm diag}(\vk v(t))A^\top $$
where $\vk v(t)=\left(v_1(t)\ldots v_d(t)\right)^\top$. Moreover, for all $i\in\{1,\ldots,d\}$ let
\begin{equation}\label{eq:def_rho_i}
\rho_i(t,s) := \cov(Z_i(t),Z_i(s))=\frac{v_i(s)+v_i(t)-v_i(\abs{s-t})}{2},
\end{equation}
where in the second equality we used the fact that $Z_i$ has stationary increments.
For all $t\in(0,T]$, let $\tilde{\vk{a}}(t)$ be the solution of quadratic programming problem
$\Pi_{\Sigma(t)}(\vk{a})$, defined in \eqref{eq:QP} and
$D(t) := \tilde{\vk a}(t)^\top\Sigma^{-1}(t)\tilde{\vk a}(t)$. Moreover, let $\vk{\lambda}(t) := \Sigma^{-1}(t)\tilde{\vk{a}}(t)$, and $I_t := \{i : \lambda_i(t) > 0\}$, $J_t := \{1,\ldots,d\}\setminus I_t$ (which can be empty). Throughout the manuscript we slightly abuse the notation by writing $\lambda_i(t)$ instead of $\lambda(t)_i$, $a_i(t)$ instead of $a(t)_i$, and $\tilde a_i(t)$ instead of $\tilde a(t)_i$.

\section{Main Results}\label{s.main}
Consider a centered, $d-$dimensional Gaussian process with stationary increments,
 continuous sample paths and mutually independent components $\vk Z(t),t\ge0$. Let $v_i(t) := \var Z_i(t)$ be the variance function of process $Z_i$. Due to the stationarity of increments, the covariance structure of $Z_i$ is determined by its variance function $v_i$, cf. \eqref{eq:def_rho_i}. We shall assume that, for each $i\in\{1,\ldots,d\}$,

\begin{itemize}
\item[{\rm \bf B0.}]\label{B0} $v_i\in C^1([0,T])$ is strictly increasing and $v_i(0)=0$.
\item[{\rm \bf BI.}]\label{BI} The first derivative $\dot{v_i}(T)>0$.
\item[{\rm \bf BII.}]\label{BII} $v_i(t) = o(t)$, as $t\to0$.
\end{itemize}

The following families of Gaussian processes satisfy assumptions {\bf  B0-BII}:
\vspace{0.1cm}

$\diamond$ {\it fractional Brownian motions}: $\vk Z(t)=(B_{\alpha_1}(t),...,B_{\alpha_d}(t))^\top,t\ge0$,
where $B_{\alpha_i}(t),t\ge0$, $i=1,...,d$ are mutually independent standard fractional Brownian motions with
Hurst parameters $\alpha_i/2\in(1/2,1)$, that is centered Gaussian processes with stationary increments, continuous sample paths a.s.
and variance function $v_i(t)=t^{\alpha_i}$ respectively.
We refer to, e.g., \cite{Mic98,HuP99, DeR02} for the motivation and relations of this class of stochastic processes in risk theory.

$\diamond$ {\it integrated stationary processes}:
$\vk Z(t)=(Z_{1}(t),...,Z_{d}(t))^\top,t\ge0$,
where
$Z_i(t)=\int_0^t \eta_i(s)ds$, with $\eta_i(t),t\ge0$, $ i=1,...,d$ mutually independent centered stationary Gaussian processes
with continuous sample paths a.s. and continuous strictly positive covariance function $R_i(t):=\cov (Z_i(s),Z_i(s+t))$.
One can check that $v_i(t)=2\int_0^t ds \int_0^s R(w) dw $ in this case. We refer to \cite{HuP04,Deb02,DeR02}
for the analysis of extremes of this class of processes
in the context of Gaussian risk theory and its relations to Gaussian fluid gueueing models.
\\

In the following, $\mathcal N_d(\mu,\Sigma)$ stands for the law of a $d$-dimensional normal distribution with mean $\mu\in\R^d$ and covariance matrix $\Sigma\in\R^{d\times d}$.

\BT\label{th.bound}
Let $\vk X (t)= A\vk Z (t)$, $t\geq0$ be such that
$\vk Z$ satisfies {\bf B0} with $v_i(t)=v(t)$ for all $i$, $v(t)$ is convex, and $A\in\R^{d\times d}$ is a non-singular matrix satisfying $(A^\top A)_{ij}\geq 0$. Then, for each $\vk u\in \R^d$ and $\vk c\ge \vk 0$,
\[
\pk{\vk X(T)- \vk c T > \vk u}
\le
\pk{\exists_{t\in [0,T]}: \vk X(t)- \vk c t > \vk u}
\le
\frac{\pk{ \vk X(T)- \vk c T > \vk u}}
{\pk{ \mathcal N_d(0,A^\top A) > \vk 0}}.
\]
\ET

\BRM
In the case when $\vk Z$ is a standard $d$-dimensional Brownian motion
the assumption $(A^\top A)_{ij}\geq 0$ can be lifted
and the upper bound in Theorem \ref{th.bound}
holds for any non-singular matrix $A$; see \cite{DHK21}.
It can be verified that this bound holds also for all $u$ large enough for the process $\vk Z$
considered in Example~\ref{example:2dim_fbm} below, which
suggests that
the upper bound in Theorem \ref{th.bound} holds for any non-singular matrix $A$.
\ERM

To the end of this paper, let
\begin{equation}\label{def:C}
\mathcal{C} := \frac{\sum_{i=1}^{d}\max\left(\lambda_i \cdot (A QA^{-1}\tilde{\vk a})_i,0\right)}{\sum_{i=1}^{d}\lambda_i \cdot (A QA^{-1}\tilde{\vk a})_i}
\end{equation}
where $Q = {\rm diag}(\dot v_i(T)/v_i(T))$, and $\vk\lambda$, $\vk{\tilde a}$ correspond to the solution of the quadratic problem $\Pi_{\Sigma(T)}(\vk a)$ defined in \eqref{eq:QP}; see also Lemma \ref{lem:quadratic_programming} below.

\BT\label{th.exact} Let $\vk X (t)= A\vk Z (t)$, $t\geq 0$ be such that
$\vk Z $ satisfies {\bf B0-BII}, $\vk a \in \R^d \setminus (-\infty,0]^d$, $\vk c\in\R^d$ and $A$ is a non-singular matrix. Then,
\[
\pk{\exists_{t\in [0,T]}: \vk X(t)- \vk c t > u \vk a}
\sim \mathcal{C} \cdot \pk{\vk X(T)- \vk c T >  u\vk a}, \quad u\to\infty,
\]
where $\mathcal{C}$ defined in \eqref{def:C} is a positive constant.
\ET
The heuristic interpretation of the bounds and asymptotics derived in Theorems \ref{th.bound}, \ref{th.exact} is that only a small area around the end point $T$
of the parameter set $[0,T]$ contributes to the tail distribution of the analyzed problem.
We refer to \cite{Has05,Has19} and references therein
for the analysis of the exact form of the asymptotics for
$\pk{\vk X(T)- \vk c T >  u\vk a}$, as $u\to\infty$; see also Lemma \ref{prop:proj1}.


The following example illustrates the main findings of this section.
\begin{example}\label{example:2dim_fbm}
Suppose that $d=2$, and $Z_1(t),Z_2(t)$ are mutually independent and identically distributed centered Gaussian
processes that satisfy {\bf B0-BII}. Then the constant $\mathcal{C}$ has the following form
$$
\mathcal{C}= \frac{\sum_{i=1}^{2}\max\left(\lambda_i \cdot \tilde{\vk a}_i,0\right)}{\sum_{i=1}^{2}\lambda_i \cdot \tilde{\vk a}_i}.
$$
We assume further that
$A=\left(\begin{smallmatrix} 1 & 0 \\
     \rho  & \sqrt{1-\rho^2} \\
      \end{smallmatrix}\right)$, where $\rho\in(-1,1)$ and $\vk a=(1,a)^\top$, with $a\le 1$.
\\
$\diamond$ If $a<\rho$, then $I=\{1\}$, $J=\{2\}$ and hence, as $u\to\infty$,
\[
\pk{\exists_{t\in [0,T]}: \vk X(t)- \vk c t > u \vk a}
\sim
     \pk{\vk X(T)-\vk c T > u}
\sim
     \pk{ X_1(T)-  c_1 T > u};
\]
$\diamond$ If $a=\rho$, then $I=\{1\}$, $U=\{2\}$ and as $u\to\infty$,
\[
\pk{\exists_{t\in [0,T]}: \vk X(t)- \vk c t > u \vk a}
\sim
     \pk{\vk X(T)-\vk c T > u}
\sim
     \pk{X_2(T)>c_2T|X_1(T)=c_1T}\pk{X_1(T)-c_1 T > u};
\]
$\diamond$ If $a>\rho$, then $I=\{1,2\}$,  $\vk{\tilde a}=\vk a$ and
$
\vk\lambda=\Sigma^{-1}(T)\vk a=\frac{v(T)}{1-\rho^2}\left(\begin{smallmatrix}1-a\rho \\a-\rho\end{smallmatrix}\right)
$, see also \cite{DHM19}.
Thus, if further $a\geq 0$, then  $\mathcal{C}=1$
and
\[
\pk{\exists_{t\in [0,T]}: \vk X(t)- \vk c t > u \vk a}
\sim
     \pk{ X_1(T)-  c_1 T > u, X_2(T)-  c_2 T > au},
\]
as $u\to\infty$.
Otherwise, if $a<0$, then
$\mathcal{C}=\frac{1-a\rho}{1-a\rho+a^2-a\rho}=\frac{1-a\rho}{1-2a\rho+a^2}>1
$
and hence, as $u\to\infty$,
\[
\pk{\exists_{t\in [0,T]}: \vk X(t)- \vk c t > u \vk a}
\sim
     \frac{1-a\rho}{1-2a\rho+a^2}\pk{ X_1(T)-  c_1 T > u, X_2(T)-  c_2 T > au }.
\]
In the above we used \cite[Lemma~4.2]{Has19} for the exact asymptotics of $\pk{\vk X(T)-\vk c T > u}$.
\end{example}

\section{Proofs of Main Results}

\begin{proof}[Proof of Theorem \ref{th.bound}]
Using the fact that $\vk c \geq \vk 0$ and $\vk u >\vk 0$, 
we have
\bqny{
\pk{\exists_{t\in[0,T]}\vk X(t)-\vk c t>\vk u}&=&\pk{\bigcup_{t\in[0,T]}\bigcap_{i=1}^{d}\{X_i(t)>u_i+c_it\}}\\
&=&1-\pk{\bigcap_{t\in[0,T]}\bigcup_{i=1}^{d}\left\{\frac{X_i(t)}{u_i+c_it}\leq 1\right\}}\\
&=&1-\pk{\bigcap_{t\in[0,T]}\bigcup_{i=1}^{d}\left\{\frac{-X_i(t)}{u_i+c_it}\geq -1\right\}}\\
&=&1-\pk{\bigcap_{t\in[0,T]}\bigcup_{i=1}^{d}\left\{\frac{X_i(t)}{u_i+c_it}\geq -1\right\}},
}
where in the last equality above we used that $X_i$ are centered.

Let $B_i(t),t\ge0$, $i=1,2,...,d$ mutually independent standard Brownian motions,
and $\vk B^*(t)=A\vk B(t)$.

Next, we show that
\bqny{
\pk{\bigcap_{t\in[0,T]}\bigcup_{i=1}^{d}\left\{\frac{X_i(t)}{u_i+c_it}\geq -1\right\}}&\geq&
\pk{\bigcap_{t\in[0,T]}\bigcup_{i=1}^{d}\left\{\frac{B^*_i(v(t))}{u_i+c_it}\geq -1\right\}}\label{in}
}
for which, by Gordon inequality (see e.g. \cite{MR800188} or \cite[page~55]{MR1088478}),
it suffices to check that:
\bqn{
\E{X_i(t)^2}&=&\E{B^*_i(v(t))^2}\label{g1}\\
\E{X_i(t)X_j(t)}&=&\E{B^*_i(v(t))B^*_j(v(t))}\label{g2}\\
\E{X_i(t)X_j(s)}&\geq&\E{B^*_i(v(t))B^*_j(v(s))},\quad\text{for}~t\not=s.\label{g3}
}
For all $i,j\in\{1\ldot d\}$ and $t\in[0,T]$ we have
\bqny{
\E{X_i(t)X_j(t)}=\E{(AZ)_i(t)(AZ)_j(t)}=\E{\sum_{k=1}^d a_{ik}Z_k(t)\sum_{k=1}^d a_{jk}Z_k(t)}=\E{\sum_{k=1}^d a_{ik}a_{jk}Z_k^2(t)}\\
=\sum_{k=1}^d a_{ik}a_{jk}\E{Z_k^2(t)}=\sum_{k=1}^d a_{ik}a_{jk}v(t)=(AA^\top)_{i,j}v(t).
}
Analogously,
\bqny{
\E{B^*_i(v(t))B^*_j(v(t))}=\E{(AB)_i(v(t))(AB)_j(v(t))}=\E{\sum_{k=1}^d a_{ik}B_k(v(t))\sum_{k=1}^d a_{jk}B_k(v(t))}\\
=\E{\sum_{k=1}^d a_{ik}a_{jk}B_k^2(v(t))}=\sum_{k=1}^d a_{ik}a_{jk}\E{B_k^2(v(t))}=\sum_{k=1}^d a_{ik}a_{jk}v(t)=(AA^\top)_{i,j}v(t).
}
Hence, equalities (\ref{g1}), (\ref{g2}) are satisfied.
For $t\neq s$  we  obtain that
\bqny{
\E{X_i(t)X_j(s)}&=&(AA^\top)_{i,j}\E{Z_1(t)Z_1(s)}=(AA^\top)_{i,j}\frac{v(s)+v(t)-v(\abs{s-t})}{2},\\
\E{B^*_i(v(t))B^*_j(v(s))}&=&(AA^\top)_{i,j}\E{B_1(v(t))B_1(v(s))}=(AA^\top)_{i,j} \min(v(t),v(s)).
}
As $(AA^\top)_{i,j}\geq 0$, it is enough to show that
\bqny{
\frac{v(s)+v(t)-v(\abs{s-t})}{2} \geq \min(v(t),v(s)).
}
Using the convexity of $v(\cdot)$, we have for all $s<t$
\bqny{
v(t-s)=\int_0^{t-s}v^\prime(x)\td x\leq \int_{s}^{t}v^\prime(x)\td x = v(t)-v(s)
}
hence
\begin{equation*}
\frac{v(s)+v(t)-v(\abs{s-t})}{2} \geq \frac{v(s)+v(t)-\abs{v(s)-v(t)}}{2}.
\end{equation*}
Thus, inequality (\ref{g3}) holds which jointly with (\ref{g1}) and (\ref{g2})
implies
\begin{eqnarray*}
\pk{\exists_{t\in [0,T]}: \vk X(t)- \vk c t > \vk u}
&\leq& \pk{\exists_{t\in [0,T]}:  A \vk B(v(t)) - \vk c t > \vk u} \\
&=& \pk{\exists_{t\in [0,v(T)]}:  A \vk B(t) - \vk c w(t)  > \vk u}\\
&\le& \pk{\exists_{t\in [0,v(T)]}:  A \vk B(t) - \vk c \frac{T}{v(T)}t  > \vk u},
\end{eqnarray*}
where $w(t)$ is the inverse function of $v(t)$.
In the second line we used the fact that $v(\cdot)$ is continuous and strictly increasing, while the inequality in
the third line follows by concavity of $w(t)$ (recall that $v(t)$ is supposed to be convex).
Finally, by \cite[Theorem~1.1]{DHK21} (see also \cite{KoW20} for the centred case)
the above is bounded by
\[
\frac{\pk{\vk A \vk B(v(T))- \vk c T > \vk u}}
{\pk{\vk A \vk B(v(T))> \vk 0}}
=
\frac{\pk{ \vk X(T)- \vk c T > \vk u}}
{\pk{ \vk X(T)> \vk 0}}
.
\]
This completes the proof.
\end{proof}

\begin{proof}[Proof of Theorem \ref{th.exact}]
For any $L>0$ we have
\begin{align*}
P_3(u,L) \leq \frac{\pk{\exists_{t\in [0,T]}: \vk X(t)- \vk c t > \vk u}}{\pk{\vk X(T)- \vk c T >  u\vk a}}
\leq \sum_{n=0}^3 P_n(u,L),
\end{align*}
with
\begin{align*}
P_n(u,L) := \frac{\pk{\exists_{t\in [T_n(u),T_{n+1}(u)]}: \vk X(t)- \vk c t > \vk u}}{\pk{\vk X(T)- \vk c T >  u\vk a}},
\end{align*}
where $T_0(u) = 0$, $T_1(u)=T_1>0$ is chosen small enough to satisfy the conditions in Lemma \ref{lem:small},
\[T_2(u,L) := T-Lu^{-2}\ln^2 u, \quad T_3(u,L) := T-Lu^{-2}, \quad \text{and} \quad T_4(u) = T.\]

The proof consists of several steps which follow by lemmas displayed and proved in the rest of this section. It turns out, that asymptotically as $u\to\infty$, only $P_3(u,L)$ contributes to the asymptotics, while
$\sum_{n=0}^2 P_n(u,L)$ is asymptotically negligible. Since each term in  $\sum_{n=0}^2 P_n(u,L)$ needs a different argument for its negligibility, we provide detailed justification in separate lemmas. Namely,
\begin{itemize}
\item[$\diamond$] For any $L>0$ it holds that $\lim_{u\to\infty}P_0(u,L)=0$ due to Lemma \ref{lem:small},
\item[$\diamond$] For any $L>0$ it holds that $\lim_{u\to\infty}P_1(u,L)=0$ due to Lemma \ref{lem:mid},
\item[$\diamond$] $\lim_{L\to\infty}\lim_{u\to\infty}P_4(u,L)=0$ due to Lemma \ref{lem:gap},
\item[$\diamond$]  $\lim_{L\to\infty}\lim_{u\to\infty}P_5(u,L)=\mathcal{C}$ due to Lemma \ref{lem:A}.
\end{itemize}
This completes the proof.
\end{proof}

\begin{lem}\label{lem:small} Under the assumptions of Theorem \ref{th.exact} it holds that
\bqny{
\frac{\pk{\exists_{t\in[0,T_1]}:\vk X(t)-\vk c t>u\vk a}}{\pk{\vk X(T)-\vk c T>u\vk a}}\to 0, \quad u\to\infty
}
for all $T_1\in(0,T)$ small enough.
\end{lem}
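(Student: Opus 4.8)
The plan is to show that on the small interval $[0,T_1]$ the supremum of each coordinate of $\vk X(t)-\vk c t$ is, with overwhelming probability, far below the level $u\vk a$, so that the probability of simultaneously exceeding $u\vk a$ on $[0,T_1]$ is super-exponentially smaller than $\pk{\vk X(T)-\vk c T>u\vk a}$. Since $\vk X(t)-\vk c t>u\vk a$ forces $X_i(t)-c_it>ua_i$ for at least one index $i$ with $a_i>0$ (and $\vk a\notin(-\infty,0]^d$ guarantees such an index exists), we have
\[
\pk{\exists_{t\in[0,T_1]}:\vk X(t)-\vk c t>u\vk a}
\le \sum_{i:\,a_i>0}\pk{\exists_{t\in[0,T_1]}: X_i(t)>ua_i+c_it}.
\]
Each $X_i=(A\vk Z)_i=\sum_k a_{ik}Z_k$ is a centered Gaussian process with stationary increments and variance function $\sigma_i^2(t):=(A\,{\rm diag}(\vk v(t))A^\top)_{ii}=\sum_k a_{ik}^2 v_k(t)$, which is continuous, increasing, and vanishes at $0$ by {\bf B0}. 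The key point is that on $[0,T_1]$ the variance $\sigma_i^2(t)$ is uniformly small once $T_1$ is small, so the Borell--TIS inequality gives, for $u$ large,
\[
\pk{\exists_{t\in[0,T_1]}: X_i(t)>ua_i+c_it}
\le \pk{\sup_{t\in[0,T_1]}X_i(t)>ua_i-|c_i|T_1}
\le \exp\!\left(-\frac{(ua_i-|c_i|T_1-m_i(T_1))^2}{2\sigma_i^2(T_1)}\right),
\]
where $m_i(T_1):=\E{\sup_{t\in[0,T_1]}X_i(t)}<\infty$. Hence the numerator is bounded above by a constant times $\exp(-c\,u^2/\sigma^2(T_1))$ for some $c>0$ (uniform in $u$ large), where $\sigma^2(T_1):=\max_{i:a_i>0}\sigma_i^2(T_1)$.

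For the denominator, $\vk X(T)-\vk c T\sim\mathcal N_d(-\vk c T,\Sigma(T))$ with $\Sigma(T)$ positive definite, so by standard Gaussian tail asymptotics (e.g. via the quadratic programming characterization in Lemma \ref{prop:proj1}, or simply since $\pk{\vk X(T)-\vk c T>u\vk a}\ge\pk{\mathcal N_d(-\vk cT,\Sigma(T))>u\vk a}$ decays like $u^{-\gamma}e^{-D(T)u^2/2}$ for an explicit exponent $D(T)=\tilde{\vk a}(T)^\top\Sigma^{-1}(T)\tilde{\vk a}(T)$), we have $\pk{\vk X(T)-\vk c T>u\vk a}\ge e^{-(D(T)+o(1))u^2/2}$ as $u\to\infty$. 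Therefore
\[
\frac{\pk{\exists_{t\in[0,T_1]}:\vk X(t)-\vk c t>u\vk a}}{\pk{\vk X(T)-\vk c T>u\vk a}}
\le \exp\!\left(-\frac{c\,u^2}{\sigma^2(T_1)}+\frac{(D(T)+o(1))u^2}{2}+O(\ln u)\right),
\]
which tends to $0$ as $u\to\infty$ provided $T_1$ is chosen small enough that $c/\sigma^2(T_1)>D(T)/2$; this is possible precisely because $\sigma^2(T_1)\to0$ as $T_1\to0$ by {\bf B0}, while $c$ and $D(T)$ do not depend on $T_1$.

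The main obstacle is making the constant $c$ in the Borell--TIS bound explicit enough and the mean term $m_i(T_1)$ controlled enough that the resulting exponent genuinely beats $D(T)u^2/2$; concretely one needs $m_i(T_1)=o(u)$ (trivially true since $m_i(T_1)$ is a fixed finite constant) and, more delicately, a lower bound of the form $(ua_i-|c_i|T_1-m_i(T_1))^2/(2\sigma_i^2(T_1))\ge c\,u^2/\sigma_i^2(T_1)$ valid for all large $u$, which holds with any $c<a_i^2/2$. One should also double-check that $\sigma_i^2(t)>0$ on $(0,T_1]$ so the exponent is well-defined — this follows from nonsingularity of $A$ together with $v_k(t)>0$ for $t>0$ under {\bf B0}. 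Everything else is routine Gaussian-tail bookkeeping.
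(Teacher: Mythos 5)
Your proposal is correct and follows essentially the same route as the paper's proof: reduce to a single coordinate $i$ with $a_i>0$, apply a concentration inequality for the supremum of the one-dimensional Gaussian process $X_i$ on $[0,T_1]$, and compare the resulting exponential rate $a_i^2/(2\sigma_i^2(T_1))$ against the denominator's rate $D(T)/2$ (which you, like the paper, extract from Lemma~\ref{prop:proj1}), choosing $T_1$ small so that $\sigma_i^2(T_1)$ is small enough. The only substantive difference is that you invoke Borell--TIS where the paper invokes Piterbarg's inequality (Thm~8.1 in \cite{Pit96}); both yield a polynomial-times-Gaussian-tail bound with the same exponent and the same conclusion, and Borell--TIS arguably requires less setup since a.s.\ boundedness of $\sup_{[0,T_1]}X_i$ already follows from continuity on a compact interval. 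You are also slightly more careful than the paper in writing the variance of $X_i(T_1)$ as $\sigma_i^2(T_1)=\sum_k a_{ik}^2 v_k(T_1)$ rather than $v_i(T_1)$; what matters, and what you correctly use, is that this quantity tends to $0$ as $T_1\to0$ under \textbf{B0}.
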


\begin{lem}\label{lem:mid} Under the assumptions of Theorem \ref{th.exact}, for any $L>0$ it holds that
\bqny{
\frac{\pk{\exists_{t\in[T_1,T-Lu^{-2}\ln^2 u]}:\vk X(t)-\vk c t>u\vk a}}{\pk{\vk X(T)-\vk c T>u\vk a}}\to 0, \quad u\to\infty
}
for all $T_1\in(0,T)>0$ small enough.
\end{lem}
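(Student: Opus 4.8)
The plan is to cover the compact interval $[T_1, T - Lu^{-2}\ln^2 u]$ by a Pickands-type grid and estimate the probability on each cell, then sum. First I would fix notation: write $p(u) := \pk{\vk X(T)-\vk c T>u\vk a}$ for the denominator, and recall from Lemma \ref{prop:proj1} (together with \cite{Has19}) that $p(u)$ decays like $u^{-m/2}\exp(-\tfrac{D(T)}{2v(T)}u^2 \cdot(1+o(1)))$ for an explicit integer $m$, where $D(T) = \tilde{\vk a}(T)^\top \Sigma^{-1}(T)\tilde{\vk a}(T)$ is the value of the quadratic program $\Pi_{\Sigma(T)}(\vk a)$. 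Hence it suffices to show that the numerator is $o\!\bigl(\exp(-\tfrac{D(T)}{2v(T)}u^2(1+o(1)))\bigr)$ with a strictly faster exponential rate, so that the ratio tends to $0$.

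The key observation is that the exponent of the single-time-slice probability $\pk{\vk X(t)-\vk c t > u\vk a}$ is governed by the function $g(t) := D(t)/(2\,\mathrm{variance\ scaling}) = \tfrac{1}{2}\tilde{\vk a}(t)^\top\Sigma^{-1}(t)\tilde{\vk a}(t)$ applied to the shifted vector $\vk a + \vk c t/u$; for $u$ large this is essentially $\tfrac{1}{2}D(t)$. Under {\bf B0} the map $t\mapsto\Sigma(t) = A\,\mathrm{diag}(\vk v(t))A^\top$ is $C^1$ and $v_i$ is strictly increasing, which forces $t\mapsto D(t)$ to be continuous and strictly decreasing on $(0,T]$ (smaller variance means a more costly large deviation); this monotonicity is exactly the content I would extract from Lemma \ref{lem:quadratic_programming}. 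Consequently there is $\delta = \delta(T_1,L) > 0$ such that $D(t) \geq D(T) + \delta$ for all $t \in [T_1, T - Lu^{-2}\ln^2 u]$ — indeed on $[T_1, T/2]$ this is uniform, and on $[T/2, T-Lu^{-2}\ln^2 u]$ one uses $\dot D(T) < 0$ (which follows from {\bf BI}, $\dot v_i(T)>0$, via the envelope theorem applied to $\Pi_{\Sigma(t)}$) to get $D(t) - D(T) \gtrsim Lu^{-2}\ln^2 u$, so the gap beats any polynomial correction. Then I would apply a standard Borell–TIS / Piterbarg-type uniform bound (e.g. the double-sum method, or more simply the union bound over an $\varepsilon u^{-2}$-mesh combined with a stationary-increments modulus-of-continuity estimate using {\bf BII}) to get
\[
\pk{\exists_{t\in[T_1,\,T-Lu^{-2}\ln^2 u]}:\vk X(t)-\vk c t>u\vk a}
\le C\,u^{K}\exp\!\Bigl(-\tfrac{D(T)+\delta/2}{2v(T)}u^2\Bigr)
\]
for constants $C, K$ depending only on the model, and dividing by $p(u)$ sends the ratio to $0$ because of the strictly larger exponential rate.

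The main obstacle, I expect, is the region near the right endpoint, $t \in [T - \eta, T - Lu^{-2}\ln^2 u]$ for small fixed $\eta$: there the exponential gap $D(t) - D(T)$ shrinks to $0$, so one cannot afford a crude polynomial-in-$u$ prefactor from a naive grid. The fix is to be quantitative about the local behavior: parametrize $t = T - s$ and use that $D(T-s) = D(T) + \dot D(T)\,(-s)(1+o(1))$ is linear in $s$ with a strictly positive coefficient $|\dot D(T)| > 0$, so the "cost'' accumulated over $[T-\eta, T - Lu^{-2}\ln^2 u]$, after the Laplace-type integration over $s$ against the Pickands constant factor, produces an extra factor that is $o(1)$ relative to the contribution of the genuine boundary layer $[T - Lu^{-2}, T]$ handled in Lemma \ref{lem:gap} — precisely because the lower cutoff $Lu^{-2}\ln^2 u$ is chosen to be asymptotically larger than the natural $u^{-2}$ scale, the suppression is by a factor $\exp(-c\ln^2 u) = o(u^{-p})$ for every $p$. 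Carrying this out rigorously requires a uniform-in-$t$ version of the single-slice asymptotics of Lemma \ref{prop:proj1} with explicit control of the prefactor's $t$-dependence (continuity of $\Sigma(t)$, $I_t \equiv I$ for $t$ near $T$ since the active index set of the quadratic program is locally constant by \eqref{keyWW}), plus a Slepian/Piterbarg comparison to reduce the supremum over the shrinking interval to its value at a single point up to a bounded constant; the bookkeeping is routine once the monotonicity of $D$ and the sign of $\dot D(T)$ are in hand.
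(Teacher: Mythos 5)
Your overall strategy is the same as the paper's: exploit that $D(t)=\tilde{\vk a}(t)^\top\Sigma^{-1}(t)\tilde{\vk a}(t)$ is strictly decreasing with $\dot D(T)<0$ (Lemma~\ref{prop:D}, not Lemma~\ref{lem:quadratic_programming} as you cite), bound the supremum by a Piterbarg-type inequality with a polynomial prefactor, and observe that the cutoff at $T-Lu^{-2}\ln^2u$ gives an excess exponent of order $\ln^2 u$ that swallows any polynomial. The paper implements this by first reducing the $d$-dimensional exceedance to a one-dimensional process $Y(t)=\langle\vk\lambda(t),\vk X(t)\rangle/D(t)$ (Lemma~\ref{lem:bound}), and then proving a reusable Lemma~\ref{lem:exp.bound} with two sets of sufficient conditions, applied to a three-piece decomposition of $[T_1,T-Lu^{-2}\ln^2u]$. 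You skip the $Y(t)$ reduction in favour of a generic mesh-plus-modulus argument, which is less clean but should go through given the Lipschitz estimates.

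Two points would need to be fixed to make the argument rigorous. First, your displayed inequality
\[
\pk{\exists_{t\in[T_1,\,T-Lu^{-2}\ln^2 u]}:\vk X(t)-\vk c t>u\vk a}
\le C\,u^{K}\exp\!\Bigl(-\tfrac{D(T)+\delta/2}{2v(T)}u^2\Bigr)
\]
cannot hold with a fixed $\delta>0$ over the whole interval, because at the right endpoint $D(T-Lu^{-2}\ln^2u)-D(T)=O(u^{-2}\ln^2 u)\to 0$; you acknowledge this afterwards, but the bound as written is false and a $u$-dependent gap must be carried through explicitly (which is exactly what the $h_2(u)$ bookkeeping in Lemma~\ref{lem:exp.bound} does). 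Second, you do not track the linear-in-$u$ contribution coming from the drift $\vk c$: the correct exponent of the ratio is $-\tfrac{u^2}{2}\bigl(D(t)-D(T)\bigr)-u\bigl(D(t)\mathcal G(t)-D(T)\mathcal G(T)\bigr)+O(1)$ with $\mathcal G$ as in~\eqref{def:G}, and near $T$ the second term is $O\!\bigl(u(T-t)\bigr)$, comparable in size to $\ln u$ and not negligible unless one shows (as the paper does, via the Lipschitz continuity of $D(\cdot)\mathcal G(\cdot)$ from Lemma~\ref{lem:lipshtz_continuity}) that it is $o\!\bigl(u^2(T-t)\bigr)$; this is why Lemma~\ref{lem:exp.bound}(ii) carries the extra hypothesis $T-L(u)=o(u(T-R(u)))$. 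Finally, your claim that the active index set satisfies $I_t\equiv I$ for $t$ near $T$ is stronger than what the paper establishes: Lemma~\ref{lem:tau}(i) gives only $I_T\subseteq I_t$, and the Lipschitz estimates are proved without assuming local constancy of $I_t$.
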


\begin{lem}\label{lem:gap}
Under the assumptions of Theorem \ref{th.exact}, there exist positive constants $C,\beta > 0$, such that
\bqny{
\lim_{u\to\infty} \frac{\pk{\exists_{t\in[T-Lu^{-2}\ln^2u,T-Lu^{-2}]}\vk X(t)-\vk ct>\vk au}}{\pk{\vk X(T)-\vk cT>u\vk a}}\leq Ce^{-\beta L}
}
for all $L>0$ large enough.
\end{lem}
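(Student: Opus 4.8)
The plan is to estimate the probability that $\vk X(t)-\vk c t>u\vk a$ holds for \emph{some} $t$ in the short window $[T-Lu^{-2}\ln^2 u,\,T-Lu^{-2}]$, and to show this is geometrically small in $L$ relative to the benchmark $\pk{\vk X(T)-\vk cT>u\vk a}$. The natural first step is to pass from the event $\exists_t$ to a single-time event by a union/Borell-TIS argument. Concretely, I would cover $[T-Lu^{-2}\ln^2u,\,T-Lu^{-2}]$ by $O(\ln^2 u)$ subintervals of length $u^{-2}$; on each such subinterval the increments of $\vk X$ are of order $u^{-1}$ in the sup-norm by assumption {\bf B0} (since $v_i\in C^1$ and hence $v_i(t)-v_i(s)=O(|t-s|)$), so with high probability the supremum over a subinterval of radius $u^{-2}$ around a point $t_k$ is within $o(u^{-1}\ln u)$ of the value at $t_k$. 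This reduces the claim, up to negligible corrections, to bounding $\sum_k \pk{\vk X(t_k)-\vk c t_k>u\vk a - \delta_u}$ where $t_k$ ranges over a grid of mesh $u^{-2}$ inside the window and $\delta_u$ is a small slack vector.

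The heart of the matter is then a Gaussian tail estimate for a \emph{fixed} time $t$ close to $T$. Writing $t = T - s$ with $s\in[Lu^{-2},Lu^{-2}\ln^2 u]$, the key quantity is the large-deviation rate $D(t) = \tilde{\vk a}(t)^\top \Sigma^{-1}(t)\tilde{\vk a}(t)$ governing $\pk{\vk X(t)-\vk c t>u\vk a}$ via $\approx \exp(-\tfrac{u^2}{2}D(t))$ (up to polynomial-in-$u$ factors, cf. Lemma \ref{prop:proj1}). The plan is to Taylor-expand $D(t)$ around $t=T$: since $\Sigma(t)=A\,\mathrm{diag}(\vk v(t))A^\top$ and each $v_i$ is $C^1$ with $\dot v_i(T)>0$, we get $D(T-s) = D(T) + s\cdot D'(T) + o(s)$ with $D'(T)>0$ (the positivity of the derivative is exactly what makes $T$ the unique maximizer of the rate on $[0,T]$ and is where {\bf BI} enters). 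Then $\pk{\vk X(T-s)-\vk c(T-s)>u\vk a} \lesssim u^{C'}\exp(-\tfrac{u^2}{2}(D(T)+\tfrac{s}{2}D'(T)))$ uniformly for $s$ in the relevant range once $u$ is large. Summing the $O(u^2\ln^2 u)$ grid points, the dominant contribution comes from $s\approx Lu^{-2}$, giving a bound $\lesssim u^{C''}\ln^2 u \cdot \exp(-\tfrac{u^2}{2}D(T))\cdot \exp(-\tfrac{1}{4}D'(T)L)$. Dividing by $\pk{\vk X(T)-\vk cT>u\vk a}\gtrsim u^{-C'''}\exp(-\tfrac{u^2}{2}D(T))$ and absorbing all polynomial and logarithmic factors (valid for $L$ large, $u$ large), one obtains the asserted bound $C e^{-\beta L}$ with $\beta$ any constant $<\tfrac14 D'(T)$, say $\beta = \tfrac14 D'(T) - \epsilon$, with $C$ chosen to swallow the polynomial prefactors for $L$ above a threshold.

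I expect the main obstacle to be making the Gaussian tail estimate \emph{uniform} over the moving time $t=T-s$: unlike the single-time asymptotics in Lemma \ref{prop:proj1}, here I need two-sided control of $\pk{\vk X(t)-\vk ct>u\vk a}$ with explicit dependence on both $u$ and $t\to T$, and in particular the polynomial prefactor must be shown to be bounded by a fixed power $u^{C}$ uniformly in $s\in[Lu^{-2},Lu^{-2}\ln^2u]$. This is delicate because the index set $I_t$ of the quadratic programming problem $\Pi_{\Sigma(t)}(\vk a)$ can in principle change as $t$ varies; however, for $t$ in a small enough left-neighborhood of $T$ one should be able to argue by continuity of $\Sigma(t)$ and the structure \eqref{keyWW} that $I_t = I_T$ (at least when $U=\emptyset$; the boundary case $U\neq\emptyset$ may require a separate, more careful treatment, possibly yielding a slightly worse prefactor but not affecting the exponential rate). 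A secondary technical point is controlling the oscillation of $\vk X$ within each mesh cell of size $u^{-2}$ — here assumption {\bf BII}, $v_i(t)=o(t)$, is not needed (that is for the earlier lemmas near $t=0$); what is needed is only the Lipschitz-type bound from $v_i\in C^1$, so a routine Piterbarg-type or Borell-TIS fluctuation estimate suffices.
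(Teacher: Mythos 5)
You have the right big picture: cover the window, localize the estimate, and use the first-order Taylor behavior $D(T-s)-D(T)\sim |\dot D(T)|\,s$ (Lemma~\ref{prop:D}) to extract the geometric decay in $L$. But the middle of the argument — passing from $\sup_{t\in I_k}$ to a single-time Gaussian tail and then absorbing the polynomial prefactors — has a genuine gap, and you point right at it without resolving it.

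The problem is that in this window $u^2(D(\tau_k)-D(T))=O(kL)$ is \emph{bounded} in $u$, so the exponential factor $e^{-\beta kL}$ cannot absorb any leftover power of $u$. Concretely, your proposed reduction $\{\exists_{t\in I_k}: \vk X(t)>u\vk a\}\subseteq\{\vk X(t_k)>u\vk a-\vk\delta_u\}\cup\{\sup_{t\in I_k}\|\vk X(t)-\vk X(t_k)\|>\delta_u\}$ is caught in a vise: to keep the rate $D(t_k)$ unchanged you need $\delta_u=o(u^{-1})$, but then Borell--TIS on an interval of length $u^{-2}$ gives $\pk{\sup\ldots>\delta_u}\lesssim e^{-c\,\delta_u^2u^2}=e^{-o(1)}=O(1)$, which is \emph{not} negligible against the benchmark $\pk{\vk X(T)-\vk cT>u\vk a}\asymp u^{-|I_T|}e^{-D(T)u^2/2}$, the latter being exponentially small in $u^2$. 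If instead you use Piterbarg's inequality on each $u^{-2}$-interval through a one-dimensional projection $Y(t)=\langle\vk\lambda(t),\vk X(t)\rangle/D(t)$ (this is essentially Lemma~\ref{lem:bound}), the interval-length factor $u^{-2}\cdot u^2$ does cancel, but the resulting single-time tail is a one-dimensional Gaussian tail $\asymp u^{-1}e^{-D(t_k)u^2/2}$, whereas the denominator is of order $u^{-|I_T|}e^{-D(T)u^2/2}$. The ratio then carries an unabsorbable factor $u^{|I_T|-1}$ whenever $|I_T|>1$, and the whole bound diverges as $u\to\infty$ for fixed $L$. This is precisely why the paper does \emph{not} reuse Lemma~\ref{lem:bound} in this region (it is only adequate for Lemma~\ref{lem:mid}, where $u^2(D(t)-D(T))\to\infty$).

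What the paper does instead is keep the problem $d$-dimensional: each block $[\tau_{k+1},\tau_k]$ with $\tau_k=T-kLu^{-2}$ is handled via the change of variables $\vk w_{u,\tau_k}(\vk x)$ and the conditional representation, yielding $M_{\tau_k}(u,L)=u^{-|I_{\tau_k}|}\int h_{u,\tau_k}(L,\vk x)\varphi_{\tau_k}(\vk w)\,d\vk x$. The crucial input is Lemma~\ref{lem:dominant}: a dominating function $H(L,\vk x)$ for $h_{u,\tau}\theta_{u,\tau}$, uniform in $\tau$ near $T$ and in $u$ large. This automatically produces the correct polynomial order $u^{-|I_T|}$ (using $I_T\subseteq I_{\tau_k}$ from Lemma~\ref{lem:tau}, so $u^{-|I_{\tau_k}|}\le u^{-|I_T|}$), which then cancels exactly against the denominator via Lemma~\ref{prop:proj1}. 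The remaining ratio of Gaussian densities $\varphi_{\tau_k}/\varphi_T$ is controlled by $\dot D(T)<0$ and the Lipschitz continuity of $\vk\lambda(\cdot)$, giving the $\sum_k e^{-\beta kL}$ geometric series. There is no elementary replacement for Lemma~\ref{lem:dominant} here; this is the piece your proposal is missing.

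One small correction: $D(t)$ is strictly \emph{decreasing} on $(0,T]$ with $\dot D(T)<0$, so $T$ minimizes $D$ (equivalently, maximizes the probability of crossing), and the relevant positive quantity in your expansion is $-\dot D(T)=|\dot D(T)|$. Also, assumption {\bf BII} is indeed used here (not just near $t=0$): Corollary~\ref{cor:weak_convergence_to_0}, which is fed into Lemma~\ref{lem:dominant}, relies on $v_j(Lu^{-2})=o(u^{-2})$ to get $\pk{W_{u,\tau}>\ep k}\le e^{-(\ep k)^2}$.
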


\begin{lem}\label{lem:A}
Under the assumptions of Theorem \ref{th.exact}, for any $L > 0$ there exists a positive constant $\mathcal{C}(L)$ such that
\bqny{
\frac{\pk{\exists_{t\in[T-Lu^{-2},T]}:\vk X(t)-\vk c t >u\vk a}}{\pk{\vk X(T)-\vk c T>u\vk a}} \to  \mathcal{C}(L), \quad u\to\infty.
}
Moreover, $\lim_{L\to\infty}\mathcal C(L)=\mathcal{C}$, with $\mathcal{C}$ defined in \eqref{def:C}.
\end{lem}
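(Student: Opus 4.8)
The plan is to work on the rescaled time window around $T$. For $t\in[T-Lu^{-2},T]$ write $t=T-su^{-2}$ with $s\in[0,L]$, and localise the event $\{\vk X(t)-\vk ct>u\vk a\}$ by conditioning on the position of $\vk X(T)$. Concretely, I would first reduce to the model at the right endpoint: since $v_i\in C^1$ and $\dot v_i(T)>0$, the covariance of $\vk X(t)$ behaves locally like $\Sigma(T)$ perturbed linearly in $su^{-2}$, governed by the matrix $Q={\rm diag}(\dot v_i(T)/v_i(T))$. The key object is the conditional law of the whole trajectory $(\vk X(T-su^{-2}))_{s\in[0,L]}$ given $\vk X(T)-\vk cT = u\vk a + \vk w/u$ for $\vk w$ in a bounded set; a Gaussian computation (as in the proof of \nelem{prop:proj1} / \cite{Has19}) shows that the relevant conditional mean, after rescaling by $u$, drifts linearly with slope determined by $AQA^{-1}\tilde{\vk a}$ on the coordinates in $I$, while the coordinates in $J$ contribute through the dimension-reduction of the quadratic program $\Pi_{\Sigma(T)}(\vk a)$.

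Next I would carry out the asymptotic evaluation. Write $\pk{\exists_{t\in[T-Lu^{-2},T]}:\vk X(t)-\vk ct>u\vk a}$ as an integral over the endpoint value: conditioning on $\vk X(T)-\vk cT$, the density of $\vk X(T)-\vk cT$ at $u\vk a+\vk z/u$ (with $\vk z\in\R^d$) is asymptotically $\pk{\vk X(T)-\vk cT>u\vk a}$ times an explicit exponential/polynomial factor in $\vk z$ coming from the standard Gaussian tail expansion built around $\tilde{\vk a}$ and $\vk\lambda$ (only the $\vk z_I$ directions enter the exponent linearly via $\vk\lambda_I$, the $\vk z_J$ directions are free on a half-space dictated by $U$). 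Inside the conditional probability, the surviving event becomes $\{\exists_{s\in[0,L]}: \vk\eta(s) > \text{(linear drift)}(s) + \text{const}\cdot\vk z\}$ for a continuous Gaussian "bridge" process $\vk\eta$ with $\vk\eta(0)=\vk 0$ whose law does not depend on $u$; dominated convergence (with the bound from Lemma \ref{lem:gap} furnishing integrability/tightness in $\vk z$ and uniform control of the tails) gives the existence of $\mathcal C(L)$ as a finite positive constant, expressed as an integral of a Gaussian orthant-type quantity over $\vk z$.

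Finally, to identify $\lim_{L\to\infty}\mathcal C(L)=\mathcal C$, I would let $L\to\infty$ in the integral representation. As $L\to\infty$ the "existence over $s\in[0,L]$" event saturates: the trajectory $\vk\eta(s)$ minus the linear drift eventually escapes (or not) depending only on the sign of the drift slopes $\lambda_i\cdot(AQA^{-1}\tilde{\vk a})_i$. Coordinates with positive slope are automatically satisfied for some $s$ (the drift helps), coordinates with negative slope force a constraint that survives in the limit, and this bookkeeping collapses the integral to exactly the ratio $\sum_i\max(\lambda_i(AQA^{-1}\tilde{\vk a})_i,0)\big/\sum_i\lambda_i(AQA^{-1}\tilde{\vk a})_i$ — matching \eqref{def:C}. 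The main obstacle I anticipate is the first step: making the conditional-trajectory expansion rigorous and uniform, i.e. showing that the $u$-dependent Gaussian field $(\vk X(T-su^{-2})\mid \vk X(T)-\vk cT\approx u\vk a)$ converges, after centering and scaling, to the limiting process $\vk\eta$ \emph{with enough uniformity in the conditioning value $\vk z$} to justify interchanging limits — this is where assumptions {\bf B0}, {\bf BI} (to get the linear drift with nonzero slope) are essential, and where one must handle the interaction between the $I$-coordinates (driving the Gaussian tail) and the $J$/$U$-coordinates (driving the dimension reduction) simultaneously.
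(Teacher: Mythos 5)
Your high-level strategy — write the probability as an integral over the conditioning value $\vk X(T)$, substitute $\vk x \mapsto u\tilde{\vk a}(T)+\vk cT-\vk x/\bar{\vk u}$ with the anisotropic $I$-vs-$J$ scaling, pass to the limit pointwise in $\vk x$, and apply dominated convergence, then take $L\to\infty$ — is exactly what the paper does. But two steps in your sketch contain genuine gaps.

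First, you keep a nontrivial Gaussian ``bridge'' process $\vk\eta$ in the limiting conditional law, and you never invoke \textbf{BII}. Under the Berman condition $v_i(t)=o(t)$, the centered conditional increment $u(A\widehat{\vk Z}_{u,T}(t))_i$ converges to $\vk 0$ \emph{uniformly} on $[0,L]$ (this is Corollary~\ref{cor:weak_convergence_to_0}, built on Lemma~\ref{lem:piterbarg_condition_Zhat}), so the limiting ``process'' is deterministic and the conditional probability $h_{u,T}(L,\vk x)$ converges to an \emph{indicator} function, not a Gaussian orthant probability. This is essential: the explicit value of $\mathcal C(L)$ comes from integrating an indicator against exponentials (Lemma~\ref{lem:int_repr}), and your closing bookkeeping ``escapes or not depending only on the sign of the drift'' is only valid when the fluctuation has genuinely vanished. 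If $\vk\eta$ were a nondegenerate bridge (as it would be for Brownian-type $v_i$), the formula~\eqref{def:C} simply would not come out.

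Second, you appeal to Lemma~\ref{lem:gap} to furnish the dominating function in $\vk z$. That lemma bounds the contribution of the \emph{gap interval} $[T-Lu^{-2}\ln^2 u, T-Lu^{-2}]$ relative to the target probability; it says nothing about a $u$-uniform, integrable bound on $h_{u,\tau}(L,\vk x)\theta_{u,\tau}(\vk x)$ as a function of $\vk x$. The paper needs a separate, nontrivial construction (Lemma~\ref{lem:dominant}), which controls the conditional excursion probability via Corollary~\ref{cor:weak_convergence_to_0} and a careful $\varepsilon$-perturbation of $\lambda_i$, the slopes, and the $J$-coordinate Gaussian factor, uniformly in $\tau\in[\tau_0,T]$. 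Without something like that the interchange of limit and integral is unjustified. Also note that the $J$-coordinate conditioning should be kept at scale $1$, not $1/u$, which you gesture at (``free on a half-space dictated by $U$'') but do not pin down; the paper's $\bar{\vk u}$ substitution is precisely the device that handles this.
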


The proofs of the four lemmas above are given in the following three subsections. The proof of each result is located at the very end of these subsections and is preceded by additional preparatory results.

\subsection{Proof of Lemma \ref{lem:small}}
Let $\varphi_{\vk X}(\cdot)$ be the pdf of random variable $\vk X$.
Before giving the proof of Lemma \ref{lem:small}, we need
the following lemma, which can be deduced from \cite[Lemma~4.2]{Has19}. Since we need a bit different
(although equivalent) form of
the derived below asymptotics, we provide an independent short proof of the following lemma.

\begin{lem}\label{prop:proj1} Let $\vk X\in\R^d$ be a centered Gaussian vector with an arbitrary, non-singular covariance matrix $\Sigma$. Then, for any $\vk c\in\R^d$, $\vk a\in \R^d \setminus (-\IF, 0]^d$ we have, as $u\to\infty$
\bqny{
\pk{\vk X - \vk c >u\vk a} \sim \frac{u^{-|I|}\varphi_{\vk X}(u\tilde{\vk a}+\vk c)}{\prod_{i\in I}\lambda_i}\int_{\R^{|J|}} \mathbb{I}\left\{\vk x_{U}<\vk 0_U\right\}e^{-\frac{1}{2}
\vk x_J^\top(\Sigma^{-1})_{JJ}\vk x_J}e^{\sprod{\tilde{\vk c}_J, \vk x_J}}\td\vk x_J,
}
where $\tilde{\vk c} := \vk c^\top\Sigma^{-1}$, and $\tilde{\vk a}, \vk\lambda$, and index sets $I,J,U$ corresponding to the quadratic programming problem $\Pi_\Sigma(\vk a)$ defined in \eqref{eq:QP}.
\end{lem}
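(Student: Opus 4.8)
\textbf{Proof plan for Lemma \ref{prop:proj1}.}

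The plan is to compute the tail probability $\pk{\vk X - \vk c > u\vk a}$ directly from the Gaussian density, using the change of variables dictated by the quadratic programming solution. First I would write
\[
\pk{\vk X - \vk c > u\vk a} = \int_{\{\vk y > u\vk a + \vk c\}} \varphi_{\vk X}(\vk y)\,\td\vk y
= (2\pi)^{-d/2}(\det\Sigma)^{-1/2}\int_{\{\vk y > u\vk a + \vk c\}} e^{-\frac{1}{2}\vk y^\top\Sigma^{-1}\vk y}\,\td\vk y,
\]
and substitute $\vk y = u\tilde{\vk a} + \vk c + \vk z / u$ for $i\in I$ (shrinking to the exponentially-rare "boundary" directions at scale $u^{-1}$) while keeping $\vk y_J = u\tilde{\vk a}_J + \vk c_J + \vk x_J$ fixed at scale $O(1)$ in the $J$ coordinates; more precisely, the right substitution splits $\R^d$ along $I$ and $J$ and rescales only the $I$-block. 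Expanding the quadratic form $\vk y^\top\Sigma^{-1}\vk y$ around the point $u\tilde{\vk a}+\vk c$ and using the Karush–Kuhn–Tucker relations \eqref{keyWW} — namely $\vk\lambda = \Sigma^{-1}\tilde{\vk a}$, $\vk\lambda_I > \vk 0_I$, $\vk\lambda_J = \vk 0_J$, and $\tilde{\vk a}_I = \vk a_I$ — is the heart of the computation: the cross term $u\,\tilde{\vk a}^\top\Sigma^{-1}(\cdot) = u\,\vk\lambda^\top(\cdot)$ picks out only the $I$-coordinates of the perturbation (since $\vk\lambda_J = \vk 0$), which after the $u^{-1}$ rescaling in $I$ produces a finite linear term $\sum_{i\in I}\lambda_i z_i$, while the $O(1)$ perturbation in the $J$-coordinates contributes the full quadratic form in $\vk x_J$.

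Next I would carry out the asymptotic evaluation. The constraint region $\vk y > u\vk a + \vk c$ translates, in the new coordinates, to: $z_i > 0$ for $i\in I$ (from $\tilde a_i = a_i$), and for $j\in J$ the condition $u\tilde a_j(t) + x_j > u a_j$, i.e. $x_j > -u(\tilde a_j - a_j)$; since $\tilde a_j \ge a_j$ with strict inequality exactly off $U$, this constraint disappears in the limit for $j\in J\setminus U$ and becomes $x_j < 0$ (after the sign bookkeeping) precisely for $j\in U$, giving the indicator $\mathbb{I}\{\vk x_U < \vk 0_U\}$. The $I$-integral $\int_{(0,\infty)^{|I|}} e^{-\sum_{i\in I}\lambda_i z_i}\,\td\vk z_I = \prod_{i\in I}\lambda_i^{-1}$ accounts for the $u^{-|I|}$ prefactor and the $\prod_{i\in I}\lambda_i$ denominator, and I would need to check that the residual quadratic term in the $I$-block (order $u^{-2}\cdot u^2 \cdot$ bounded, hence $O(u^{-2})$ times the linear term, wait — actually order $1$ from $\vk z_I^\top(\Sigma^{-1})\vk z_I / u^2 \cdot$, which is $o(1)$) vanishes, and that the $I$–$J$ cross terms in the expanded quadratic form vanish in the limit as well. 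The leftover $J$-integral, with the Schur-complement identity relating the $JJ$-block of the full expansion to $(\Sigma^{-1})_{JJ}$ and the linear term $\vk c^\top\Sigma^{-1}$ restricted to $J$ giving $\langle\tilde{\vk c}_J,\vk x_J\rangle$, yields exactly the claimed expression. The factor $(2\pi)^{-d/2}(\det\Sigma)^{-1/2}e^{-\frac12(u\tilde{\vk a}+\vk c)^\top\Sigma^{-1}(u\tilde{\vk a}+\vk c)}$ is precisely $\varphi_{\vk X}(u\tilde{\vk a}+\vk c)$.

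The main obstacle I anticipate is making the interchange of limit and integral rigorous — i.e., justifying that the $J$-integral over the (expanding) region $\{x_j > -u(\tilde a_j - a_j): j\in J\setminus U\}$ converges to the integral over $\R^{|J\setminus U|}\times(-\infty,0)^{|U|}$, and that the error terms from the quadratic expansion are uniformly controlled so dominated convergence applies. For $j\in J\setminus U$ one has $\tilde a_j > a_j$ strictly, so the lower limit $-u(\tilde a_j - a_j)\to-\infty$, but one must dominate $e^{-\frac12 \vk x_J^\top(\Sigma^{-1})_{JJ}\vk x_J + \langle\tilde{\vk c}_J,\vk x_J\rangle}$ (a Gaussian-type integrand, hence integrable) uniformly in $u$ after absorbing the lower-order expansion remainders — this is where a careful bound on the remainder in the Taylor expansion of the quadratic form, combined with the positive-definiteness of $(\Sigma^{-1})_{JJ}$ (which follows since $\Sigma^{-1}$ is positive definite and $(\Sigma^{-1})_{JJ}$ is a principal submatrix), closes the argument. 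The rest is bookkeeping with the KKT conditions and a standard Laplace-type asymptotic in the $I$-directions.
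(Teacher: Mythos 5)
Your proposal is correct and follows essentially the same route as the paper: change variables around the QP minimizer $u\tilde{\vk a}+\vk c$ with a $u^{-1}$ rescaling in the $I$-block and no rescaling in the $J$-block, use $\vk\lambda_J=\vk 0$ and $\tilde{\vk a}_I=\vk a_I$ to identify the surviving terms in the expanded quadratic form, and pass to the limit by dominated convergence (the paper exhibits the explicit dominating function $e^{\sprod{\vk\lambda_I,\vk x_I}}e^{\frac12\sprod{\vk\lambda_I,|\vk x_I|}}e^{-\frac12\vk x_J^\top(\Sigma^{-1})_{JJ}\vk x_J}e^{\sprod{\tilde{\vk c}_J,\vk x_J}}$). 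The only differences are cosmetic: the paper writes the substitution with a minus sign and a single vector $\bar{\vk u}$, which flips the half-space constraints to $\vk x_{I\cup U}<\vk 0$, and no Schur complement is actually needed since $\delta_I\to\vk 0$ makes the $JJ$-block of the quadratic form appear directly.
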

\begin{proof}
Let $\bar u\in\R^d$ be such that $\bar u_i = u$ when $i\in I$ and $\bar u_i = 1$ when $i\in J$. We apply substitution $\vk w = u\tilde{\vk a} + \vk c - \vk x/\bar u$ and obtain
\begin{align*}
\pk{\vk X -\vk c >u\vk a} & = \int_{\vk w>u\vk a+\vk cT}\varphi_{\vk X}(\vk w)\td\vk w=u^{-\abs{I}}\int_{\vk x<u\bar{u}(\tilde{\vk a}-\vk a)}\varphi(u\tilde{\vk a}+\vk c-\vk x/\bar{\vk u})\td\vk x\\
& = u^{-|I|}\varphi(u\tilde{\vk a}+\vk c)\int_{\R^d}\mathbb{I}\left\{\vk x<u\bar{u}(\tilde{\vk a}-\vk a)\right\}\theta_u(\vk x)\td\vk x,
\end{align*}
where $\theta_u(\vk x) := \varphi(u\tilde{\vk a}+\vk c-\vk x/\bar{\vk u})/\varphi(u\tilde{\vk a}+\vk c)$. We have $\mathbb{I}_{\left\{\vk x<u\bar{u}(\tilde{\vk a}-\vk a)\right\}}\to\mathbb{I}\left\{\vk x_{I\cup U}<\vk 0\right\}$, as $u\to\infty$ and
\begin{align*}
\theta_u(\vk x) & = \exp\left\{u\tilde{\vk a}^\top\Sigma^{-1}(\vk x/\bar{\vk u}) + \vk c^\top\Sigma^{-1}(\vk x/\bar{\vk u}) - \frac{1}{2}(\vk x/\bar{\vk u})^\top \Sigma^{-1}(\vk x/\bar{\vk u})\right\} \\
& \to e^{\sprod{\vk \lambda_I, \vk x_I}} \cdot e^{-\frac{1}{2}\vk x_J^\top(\Sigma^{-1})_{JJ}\vk x_J}
e^{\sprod{\tilde{\vk c}_J, \vk x_J}} =: \theta(\vk x),
\end{align*}
as $u\to\infty$. So, applying the dominated convergence theorem, with dominating, integrable function
\[e^{\sprod{\vk \lambda_I, \vk x_I}}e^{\frac{1}{2}\sprod{\vk \lambda_I, |\vk x_I|}}
\cdot e^{-\frac{1}{2}\vk x_J^\top(\Sigma^{-1})_{JJ}\vk x_J}e^{\sprod{\tilde{\vk c}_J, \vk x_J}},\]
we obtain
\begin{align*}
\frac{\pk{\vk X-\vk c>u\vk a}}{u^{-|I|}\varphi(u\tilde{\vk a}+\vk c)}& \to \int_{\R^d}\mathbb{I}\left\{\vk x_{I\cup U}<\vk 0\right\}\theta(\vk x)\td\vk x\\
& = \int_{\R^{\abs{I}}}\mathbb{I}{\left\{\vk x_{I}<\vk 0_{I}\right\}}
e^{\sum\limits_{i\in I}\lambda_i x_i}\td\vk x_I \cdot \int_{\R^{|J|}}
\mathbb{I}\left\{\vk x_{U}<\vk 0_U\right\}e^{-\frac{1}{2}\vk x_J^\top(\Sigma^{-1})_{JJ}\vk x_J}e^{\sprod{\tilde{\vk c}_J, \vk x_J}}\td\vk x_J\\
& = \frac{1}{\prod_{i\in I}\lambda_i}\int_{\R^{|J|}} e^{-\frac{1}{2}\vk x_J^\top(\Sigma^{-1})_{JJ}\vk x_J}e^{\sprod{\tilde{\vk c}_J, \vk x_J}}\td\vk x_J,
\end{align*}
which concludes the proof.
\end{proof}

\begin{proof}[Proof of Lemma~\ref{lem:small}]
First, using \nelem{prop:proj1}, we know that there exist some $C>0$, $k\in\N$ such that
\begin{align*}
\pk{\vk X(T) - \vk c T > \vk au} \sim Cu^{-k}\varphi_T(\tilde{\vk a}u + \vk c T),
\end{align*}
as $u\to\infty$, where $\varphi_T$ is the density of $\vk X(T)$. Second, fix some $i\in\{1\ldot d\}$ such that $a_i>0$. Then
\bqny{
\pk{\exists_{t\in[0,T_1]}:\vk X(t)-\vk c t>u\vk a}
\leq
\pk{\exists_{t\in[0,T_1]}: X_i(t)- c_i t> a_i u}
\leq
\pk{\exists_{t\in[0,T_1]}: X_i(t)> a_i u-\abs{c_i}T_1}.
}
Using assumption {\bf B0}, we can apply Piterbarg's inequality \cite[Thm 8.1]{Pit96},
receiving for some positive constant $C_1$
and all sufficiently large $u$
\bqny{
\pk{\exists_{t\in[0,T_1]}: X_i(t) > a_i u-\abs{c_i}T_1}\leq \pk{\exists_{t\in[0,T_1]}: |X_i(t)| > a_i u-\abs{c_i}T_1}
\leq C_1 (a_i u-\abs{c_i}T_1)^{2}\pk{X_i(T_1) > a_i u-\abs{c_i}T_1}
}
for $u>0$. Hence, for all $u$ large enough we have
\bqny{
\frac{\pk{\exists_{t\in[0,T_1]}: X_i(t)> a_i u-\abs{c_i}T_1}}{\pk{\vk X(T) - \vk c T > \vk au}}
\leq 2 C_1(a_i u-\abs{c_i}T_1)^2u^{k}\frac{\pk{X_i(T_1) > a_i u-\abs{c_i}T_1}}{\varphi_T(\tilde{\vk a}u+\vk c T)}.
}
Since $\pk{\mathcal N(0,1) > u} \sim \frac{1}{\sqrt{2\pi}u}\exp(-u^2/2)$ as $u\to\infty$, it is left to show that
\begin{equation*}
\lim_{u\to\infty}u^{k -1}\frac{\exp\left(-(a_i u-\abs{c_i} T_1)^2/(2v_i(T_1))\right)}
{\exp\left(-\frac{1}{2}(\tilde{\vk a}u+\vk cT)^\top\Sigma^{-1}(T)(\tilde{\vk a}u+\vk cT)\right)}= 0.
\end{equation*}
We have
\begin{align*}
\frac{\exp\left(-(a_i u-\abs{c_i}T_1)^2/(2v_i(T_1))\right)}
{\exp\left(-\frac{1}{2}(\tilde{\vk a}u+\vk cT)^\top\Sigma^{-1}(T)(\tilde{\vk a}u+\vk cT)\right)} =
\exp\left[-\frac{1}{2}\left(\frac{a_i^2}{v_i(T_1)} - \tilde{\vk a}^\top \Sigma^{-1}(T)\tilde{\vk a}\right)u^2 + O(u)\right].
\end{align*}
Finally, since $v_i(0)=0$ and $v_i(\cdot)$ is continuous,
then $\frac{a_i^2}{v_i(T_1)}>\tilde{\vk a}^\top\Sigma^{-1}(T)\tilde{\vk a}$ for all $T_1$ small enough,
which completes the proof.
\end{proof}

\subsection{Proof of Lemma \ref{lem:mid}}

Before giving the proof, we need to layout preliminary results. Below, we cite the result from \cite[Lemma~4]{debicki2020extremes}. In the following, $J= \{1 \ldot d\}\setminus I$ can be empty; the claim in Lemma~\ref{lem:quadratic_programming}(ii) is formulated under the assumption that $J$ is non-empty.

\begin{lem}\label{lem:quadratic_programming}  Let $d \geq 2$ and
$\Sigma$ a $d \times d$ symmetric positive definite  matrix with inverse $\Sigma^{-1}$. If $\vk{a}\in\R^d \setminus (-\infty, 0]^d $, then the quadratic programming problem $\Pi_{\Sigma}(\vk b)$ defined in \eqref{eq:QP} has a unique solution $\tilde{\vk a}$ and there exists a unique non-empty index set $I\subset \{1 \ldot d\}$ with $|I| \le d$ elements such  that
\begin{itemize}
\item[(i)] $\tilde{\vk{a}}_{I} = \vk{a}_{I} \not=\vk{0}_I$;
\item[(ii)] $\tilde{\vk{a}}_{J} = \Sigma_{IJ}^{-1} \Sigma_{II}^{-1} \vk{a}_{I}\ge \vk{a}_{J}$, and $\Sigma_{II}^{-1} \vk{a}_{I}>\vk{0}_I$;
\item[(iii)] $\min_{\vk{x} \ge \vk{a}}\x^\top \Sigma^{-1}\x = \tilde{\vk{a}} ^\top \Sigma^{-1} \tilde{\vk{a}} = \vk{a} ^\top \Sigma^{-1} \tilde{\vk{a}} = \vk{a}_{I}^\top \Sigma_{II}^{-1}\vk{a}_{I}>0$,
\end{itemize}
with $\vk{\lambda}= \Sigma^{-1} \tilde{\vk{a}}$ satisfying $\vk{\lambda}_I= \Sigma_{II}^{-1} \vk{a}_I> \vk 0_I$, and $\vk{\lambda}_J= \vk{0}_J$.
\end{lem}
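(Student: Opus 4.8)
This lemma is quoted from \cite[Lemma~4]{debicki2020extremes}; for completeness the plan is to prove it directly from the Karush--Kuhn--Tucker (KKT) optimality conditions, which for this strictly convex program over a polyhedron are both necessary and sufficient.

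First I would settle existence and uniqueness of $\tilde{\vk a}$. Since $\Sigma^{-1}$ is positive definite, $f(\x):=\x^\top\Sigma^{-1}\x$ is strictly convex, and from $f(\x)\ge\lambda_{\min}(\Sigma^{-1})\|\x\|^2$ it is coercive; as $\{\x:\x\ge\vk a\}$ is closed, convex and non-empty, $f$ attains its minimum there at a unique point $\tilde{\vk a}$. Because the constraints are polyhedral, the KKT conditions hold at $\tilde{\vk a}$: there is a multiplier $\vk\mu\ge\vk 0$ with $2\Sigma^{-1}\tilde{\vk a}=\vk\mu$ (stationarity) and $\mu_i(\tilde a_i-a_i)=0$ for all $i$ (complementary slackness). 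Setting $\vk\lambda:=\Sigma^{-1}\tilde{\vk a}=\tfrac12\vk\mu\ge\vk 0$ and $I:=\{i:\lambda_i>0\}$, $J:=\{1\ldot d\}\setminus I$, complementary slackness immediately gives $\tilde{\vk a}_I=\vk a_I$.

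Next I would read off the explicit formulas. If $I=\emptyset$ then $\vk\lambda=\vk 0$, so $\tilde{\vk a}=\Sigma\vk\lambda=\vk 0$, and feasibility would force $\vk a\le\vk 0$, contradicting $\vk a\in\R^d\setminus(-\infty,0]^d$; hence $I\ne\emptyset$. From $\tilde{\vk a}=\Sigma\vk\lambda$ and $\vk\lambda_J=\vk{0}_J$ the $I$-block gives $\tilde{\vk a}_I=\Sigma_{II}\vk\lambda_I$, whence $\vk\lambda_I=\Sigma_{II}^{-1}\vk a_I>\vk{0}_I$ by the definition of $I$; in particular $\vk a_I\ne\vk{0}_I$, which finishes (i) and the positivity assertion in (ii). The $J$-block gives $\tilde{\vk a}_J=\Sigma_{JI}\vk\lambda_I=\Sigma_{JI}\Sigma_{II}^{-1}\vk a_I$, and feasibility $\tilde{\vk a}_J\ge\vk a_J$ gives the inequality in (ii). For (iii) I would expand $\tilde{\vk a}^\top\Sigma^{-1}\tilde{\vk a}=\tilde{\vk a}^\top\vk\lambda=\sprod{\tilde{\vk a}_I,\vk\lambda_I}=\sprod{\vk a_I,\vk\lambda_I}$ (using $\vk\lambda_J=\vk{0}_J$ and $\tilde{\vk a}_I=\vk a_I$), and the identical computation with $\vk a$ in place of $\tilde{\vk a}$ gives $\vk a^\top\Sigma^{-1}\tilde{\vk a}=\sprod{\vk a_I,\vk\lambda_I}=\vk a_I^\top\Sigma_{II}^{-1}\vk a_I$; this is strictly positive because $\Sigma_{II}$, a principal submatrix of the positive definite $\Sigma$, is positive definite, hence so is its inverse, and $\vk a_I\ne\vk{0}_I$.

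Finally, uniqueness of $I$: since $\vk\lambda=\Sigma^{-1}\tilde{\vk a}$ is determined by the unique minimiser $\tilde{\vk a}$, the requirements $\vk\lambda_I>\vk{0}_I$, $\vk\lambda_J=\vk{0}_J$ force $I$ to be exactly the support of $\vk\lambda$; conversely any pair $\tilde{\vk a},I$ satisfying (i)--(iii) yields, via $\vk\mu=2\vk\lambda$, a KKT pair and hence recovers the minimiser, so the description is complete and unique. I do not expect a real obstacle here: the only points deserving care are invoking that KKT is necessary \emph{and} sufficient for this convex polyhedral program, and the elementary linear-algebra fact that principal submatrices of a positive definite matrix — and their inverses — are again positive definite.
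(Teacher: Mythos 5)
Your proof is correct and self-contained. Note that the paper itself does not prove this lemma at all; it simply cites \cite[Lemma~4]{debicki2020extremes}, so there is no in-paper argument to compare against. Your KKT-based derivation is the standard (and essentially the only natural) route: strict convexity plus coercivity gives a unique minimiser; linear constraints guarantee KKT is necessary without a constraint qualification, and convexity gives sufficiency; defining $I$ as the support of $\vk\lambda=\Sigma^{-1}\tilde{\vk a}$ and invoking complementary slackness yields items (i)--(iii), with the nonemptiness of $I$ coming from $\vk a\notin(-\infty,0]^d$. The uniqueness argument for $I$ at the end is also sound, since any admissible pair $(\tilde{\vk a},I)$ forces $I$ to equal the support of the uniquely determined $\vk\lambda$.

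One small remark worth flagging, since you would be filling in a gap the paper leaves implicit: the formula in the lemma as printed, $\tilde{\vk a}_J=\Sigma_{IJ}^{-1}\Sigma_{II}^{-1}\vk a_I$, contains a typo (there is no inverse on $\Sigma_{IJ}$, and with the paper's row/column convention the block should be $\Sigma_{JI}$ for the dimensions to match). Your derivation $\tilde{\vk a}_J=\Sigma_{JI}\vk\lambda_I=\Sigma_{JI}\Sigma_{II}^{-1}\vk a_I$ is the correct form and is consistent with the displayed equation \eqref{keyWW} modulo the same notational slip. It is also worth being explicit, as you are, that $\Sigma_{II}$ is positive definite because it is a principal submatrix of the positive definite $\Sigma$, which is what makes $\vk a_I^\top\Sigma_{II}^{-1}\vk a_I>0$ once $\vk a_I\neq\vk 0_I$; this is the only place where positive definiteness (as opposed to mere invertibility) of the block is used.
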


\begin{remark}\label{rem:change_a_with_a_tilde}
Using Lemma \ref{lem:quadratic_programming} it can be found that
$\tilde{\vk{a}} ^\top \Sigma^{-1} \tilde{\vk{a}} = \vk{a} ^\top \Sigma^{-1} \tilde{\vk{a}}$.
\end{remark}

To the end of this paper, let $D(t):=:= \tilde{\vk a}(t)^\top\Sigma^{-1}(t)\tilde{\vk a}(t)$.
\begin{lem}\label{prop:D} Assuming conditions \textbf{B0-BII} hold, then $D(t)$ is positive and strictly decreasing on $t\in(0,T]$. Moreover, $\dot D(T) = -\left\|{\rm diag}\left(\sqrt{\dot{\vk v}(T)}/\vk v(T)\right)A^{-1}\tilde{\vk a}(T)\right\|_2^2<0$.
\end{lem}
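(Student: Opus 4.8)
The plan is to treat $D(t)$ as the value function of the parametric quadratic programme $\min_{\vk x\ge\vk a}\vk x^\top\Sigma^{-1}(t)\vk x$ and to read off its monotonicity and derivative from how $\Sigma^{-1}(t)$ varies with $t$. Write $M(t):=\Sigma^{-1}(t)=(A^{-1})^\top\,{\rm diag}\big(1/v_1(t),\ldots,1/v_d(t)\big)\,A^{-1}$; by \textbf{B0} we have $v_i(0)=0$ and $v_i$ strictly increasing, so $v_i(t)>0$ and $M(t)$ is positive definite on $(0,T]$. By Lemma~\ref{lem:quadratic_programming}(iii), $D(t)=\tilde{\vk a}(t)^\top M(t)\tilde{\vk a}(t)=\min_{\vk x\ge\vk a}\vk x^\top M(t)\vk x>0$, the strict positivity being immediate since $M(t)$ is positive definite and $\tilde{\vk a}(t)\ne\vk 0$ (its $I_t$-coordinates equal $\vk a_{I_t}\ne\vk 0_{I_t}$ by Lemma~\ref{lem:quadratic_programming}(i)).

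For strict monotonicity I would use that $t\mapsto M(t)$ is strictly decreasing in the Loewner order: for $0<s<t\le T$, $M(s)-M(t)=(A^{-1})^\top{\rm diag}\big(1/v_i(s)-1/v_i(t)\big)A^{-1}$ is positive definite, again because each $v_i$ is strictly increasing. Since $\tilde{\vk a}(s)\ge\vk a$ is feasible for the programme defining $D(t)$ and $\tilde{\vk a}(s)\ne\vk 0$,
\[
D(t)\le\tilde{\vk a}(s)^\top M(t)\tilde{\vk a}(s)<\tilde{\vk a}(s)^\top M(s)\tilde{\vk a}(s)=D(s),
\]
so $D$ is strictly decreasing on $(0,T]$; in particular $D$ is bounded near $T$.

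For $\dot D(T)$ I would first record that $\tilde{\vk a}(\cdot)$ is continuous at $T$. As $t\to T$ the minimisers stay in a bounded set (the smallest eigenvalue of $M(t)$ is bounded below by a positive constant for $t$ near $T$, and $D$ is bounded there), so along any $t_n\to T$ one may extract a subsequential limit $\vk b\ge\vk a$; from $\tilde{\vk a}(t_n)^\top M(t_n)\tilde{\vk a}(t_n)=D(t_n)\le\tilde{\vk a}(T)^\top M(t_n)\tilde{\vk a}(T)$ and $M(t_n)\to M(T)$ one gets $\vk b^\top M(T)\vk b\le D(T)$, so $\vk b$ solves $\Pi_{\Sigma(T)}(\vk a)$ and, by uniqueness, $\vk b=\tilde{\vk a}(T)$; hence $\tilde{\vk a}(t)\to\tilde{\vk a}(T)$. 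Next, swapping feasible points exactly as above, for $h>0$
\[
\tilde{\vk a}(T)^\top\frac{M(T)-M(T-h)}{h}\tilde{\vk a}(T)\ \le\ \frac{D(T)-D(T-h)}{h}\ \le\ \tilde{\vk a}(T-h)^\top\frac{M(T)-M(T-h)}{h}\tilde{\vk a}(T-h).
\]
Since $v_i\in C^1$ and $v_i(T)>0$, $M$ has at $T$ the (left) derivative $\dot M(T)=-(A^{-1})^\top{\rm diag}\big(\dot v_i(T)/v_i(T)^2\big)A^{-1}$, so letting $h\downarrow0$ and using the continuity of $\tilde{\vk a}(\cdot)$ both bounds converge to $\tilde{\vk a}(T)^\top\dot M(T)\tilde{\vk a}(T)$. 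Therefore
\[
\dot D(T)=\tilde{\vk a}(T)^\top\dot M(T)\tilde{\vk a}(T)=-\big(A^{-1}\tilde{\vk a}(T)\big)^\top{\rm diag}\big(\dot v_i(T)/v_i(T)^2\big)\big(A^{-1}\tilde{\vk a}(T)\big)=-\Big\|{\rm diag}\big(\sqrt{\dot{\vk v}(T)}/\vk v(T)\big)A^{-1}\tilde{\vk a}(T)\Big\|_2^2,
\]
which is strictly negative since $\dot v_i(T)>0$ for all $i$ by \textbf{BI} and $A^{-1}\tilde{\vk a}(T)\ne\vk 0$.

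The step I expect to be most delicate is the differentiation at $T$: one cannot simply differentiate the closed form $D(t)=\vk a_{I_t}^\top\Sigma_{I_tI_t}^{-1}(t)\vk a_{I_t}$ term by term, because the optimal index set $I_t$ may jump as $t\uparrow T$ (precisely when $U\ne\emptyset$). The two-sided estimate above --- an envelope-theorem argument made self-contained through the ``feasible point swap'' --- sidesteps this, at the cost of the one soft ingredient of continuity of the solution map of the strictly convex quadratic programme, which the boundedness-plus-uniqueness argument supplies.
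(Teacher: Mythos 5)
Your proof is correct and follows essentially the same route as the paper's: strict monotonicity via the feasible-point swap combined with the strict Loewner monotonicity of $\Sigma^{-1}(t)$, and $\dot D(T)$ via the same two-sided sandwich of difference quotients evaluated at $\tilde{\vk a}(T)$ and $\tilde{\vk a}(t)$. The only difference is that you supply a self-contained compactness-plus-uniqueness argument for the continuity of $\tilde{\vk a}(\cdot)$ at $T$, which the paper instead takes from its separate Lipschitz-continuity lemma (based on Hager's stability result); this is a harmless, slightly more elementary substitute.
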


\begin{proof}
Let $0< t_1<t_2\leq T$. Then
\begin{align*}
D(t_2) &= \tilde{\vk a}(t_2)^\top\Sigma^{-1}(t_2)\tilde{\vk a}(t_2)
\leq
\tilde{\vk a}(t_1)^\top\Sigma^{-1}(t_2)\tilde{\vk a}(t_1)=\tilde{\vk a}(t_1)^\top {A^{-\top}}~{\rm diag}(1/\vk v(t_2))A^{-1}\tilde{\vk a}(t_1)\\
&=D(t_1)-\tilde{\vk a}(t_1)^\top {A^{-1}}^\top~{\rm diag}\left(\frac{\vk v(t_2)-\vk v(t_1)}{\vk v(t_2)\vk v(t_1)}\right)A^{-1}\tilde{\vk a}(t_1)\\
&=D(t_1)-\left\| {\rm diag}\left(\frac{\sqrt{\vk v(t_2)-\vk v(t_1)}}{\sqrt{\vk v(t_2)\vk v(t_1)}}\right)A^{-1}\tilde{\vk a}(t_1)\right\|_2^2<D(t_1)
\end{align*}
because $\tilde{\vk a}(t_1)\not=\vk 0$ and $\vk v(t)$ is strictly increasing. This shows that $D(t)$ is strictly decreasing. Furthermore, we have
\bqny{
\tilde{\vk a}(T)^\top {A^{-\top}}{\rm diag}\left(\frac{\vk v(t)-\vk v(T)}{(T-t)\vk v(t)\vk v(T)}\right)A^{-1}\tilde{\vk a}(T)\leq\frac{D(T)-D(t)}{T-t}\leq\tilde{\vk a}(t)^\top {A^{-\top}}{\rm diag}\left(\frac{\vk v(t)-\vk v(T)}{(T-t)\vk v(t)\vk v(T)}\right)A^{-1}\tilde{\vk a}(t)
}
using that $\tilde{\vk a}(t)$ and $\vk v(t)$ are continuous and $\vk v(t)$ has a positive derivative at the point $t=T$, we have as $t\to T$
\bqny{
\frac{D(T)-D(t)}{T-t}\to-\tilde{\vk a}(T)^\top {A^{-\top}}\left(\frac{\dot{\vk v}(T)}{\vk v^2(T)}\right)A^{-1}\tilde{\vk a}(T)=-\left\|{\rm diag}\left(\sqrt{\dot{\vk v}(T)}/\vk v(T)\right)A^{-1}\tilde{\vk a}(T)\right\|_2^2<0,
}
hence the claim follows.
\end{proof}

\begin{lem}\label{lem:lipshtz_continuity}
Let $T_0\in(0,T]$. Then $D(t)$, and $\tilde{a}_i(t)$, $\lambda_i(t)$, $\frac{\lambda_i(t)}{D(t)}$ are Lipshitz continuous functions on $t\in[T_0,T]$ for all $i\in\{1,\ldots,d\}$.
\end{lem}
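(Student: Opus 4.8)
The statement is that the functions $D(t)$, $\tilde a_i(t)$, $\lambda_i(t)$, and $\lambda_i(t)/D(t)$ are Lipschitz continuous on any compact interval $[T_0,T]\subset(0,T]$. The natural approach is to reduce everything to the regularity of the input data: by Lemma \ref{lem:quadratic_programming} the solution of $\Pi_{\Sigma(t)}(\vk a)$ is governed, on each fixed active set $I$, by the explicit formulae $\tilde{\vk a}(t)_I=\vk a_I$, $\tilde{\vk a}(t)_J=\Sigma_{IJ}(t)\Sigma_{II}^{-1}(t)\vk a_I$ and $\vk\lambda(t)_I=\Sigma_{II}^{-1}(t)\vk a_I$, $\vk\lambda(t)_J=\vk 0_J$. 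Since $\Sigma(t)=A\,\mathrm{diag}(\vk v(t))A^\top$ with each $v_i\in C^1([0,T])$ (assumption \textbf{B0}), the entries of $\Sigma(t)$ are $C^1$ on $[0,T]$, hence Lipschitz on $[T_0,T]$; and since $\vk v(t)>\vk 0$ strictly on $(0,T]$ with $v_i$ continuous, $\Sigma(t)$ and all the principal submatrices $\Sigma_{II}(t)$ are uniformly nonsingular on $[T_0,T]$, so $\det\Sigma_{II}(t)$ is bounded away from $0$ there. Matrix inversion is a smooth (locally Lipschitz) map on the set of matrices with determinant bounded away from zero, so $\Sigma_{II}^{-1}(t)$ is Lipschitz on $[T_0,T]$; products and linear combinations of Lipschitz, bounded matrix-valued functions are Lipschitz. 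This gives Lipschitz continuity of $\tilde{\vk a}(t)$, $\vk\lambda(t)$, and $D(t)=\tilde{\vk a}(t)^\top\Sigma^{-1}(t)\tilde{\vk a}(t)$ as long as the active index set $I_t$ stays constant; and $\lambda_i(t)/D(t)$ is then Lipschitz because $D(t)$ is continuous and, by Lemma \ref{prop:D}, strictly positive, hence bounded below by a positive constant on $[T_0,T]$.

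**Handling the change of active set.** The one genuine subtlety is that $I_t$ need not be constant on $[T_0,T]$: as $t$ varies the optimal active set can jump. So the argument above must be upgraded to a global Lipschitz bound. I would do this in two steps. First, I would observe that $[T_0,T]$ decomposes into finitely many relatively closed subsets $E_I=\{t\in[T_0,T]:I_t=I\}$, one for each of the finitely many candidate index sets $I\subseteq\{1,\dots,d\}$; on each $E_I$ the explicit formulae apply and, by the reasoning above, yield a common Lipschitz constant $L_I$ (the Lipschitz constants of the formulae depend only on $A$, on $\sup\|\dot{\vk v}\|$, and on the uniform lower bound for $\vk v$ and for the relevant determinants on $[T_0,T]$, not on $I$ beyond taking a max over finitely many sets). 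Let $L_0=\max_I L_I$. Second, I would use continuity of the solution map to glue: the map $t\mapsto(\tilde{\vk a}(t),\vk\lambda(t))$ is continuous on $[T_0,T]$ — this follows from uniqueness of the QP solution together with standard continuity of solutions of strictly convex quadratic programs under continuous perturbation of the data (or can be argued directly: any limit point of $\tilde{\vk a}(t_n)$ as $t_n\to t$ is feasible and, by lower semicontinuity of the objective and the Karush–Kuhn–Tucker conditions passing to the limit, must equal the unique minimiser $\tilde{\vk a}(t)$). Given continuity plus a local Lipschitz bound valid near each point with the uniform constant $L_0$, a routine chaining argument over a finite cover yields the global Lipschitz bound with constant $L_0$ on all of $[T_0,T]$: for any $s<t$, subdivide $[s,t]$ at the (finitely many, by an argument below) points where $I$ changes, apply the local bound on each piece, and sum, using continuity at the breakpoints. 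Finally, for $\lambda_i(t)/D(t)$: combine the Lipschitz bound and boundedness of $\lambda_i(t)$ with the Lipschitz bound and strict positivity (hence bounded-below-ness) of $D(t)$ via the quotient rule $|f(t)/g(t)-f(s)/g(s)|\le \frac{|f(t)-f(s)|}{|g(t)|}+|f(s)|\frac{|g(t)-g(s)|}{|g(t)g(s)|}$.

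**Where the difficulty lies.** The routine parts are: verifying $\Sigma(t)\in C^1$, uniform nonsingularity of $\Sigma(t)$ and $\Sigma_{II}(t)$ on $[T_0,T]$, Lipschitzness of inversion, and the quotient estimate for $\lambda_i/D$. The only step requiring genuine care is the transition across changes of the active set $I_t$, which is why the clean way is to prove (a) a uniform local Lipschitz constant on the stratum of each fixed $I$ and (b) global continuity of the solution map, and then stitch these together. One small technical point I would flesh out inside (b): one should check that on $[T_0,T]$ the number of maximal intervals on which $I_t$ is constant is finite, or at least that the chaining argument survives even if the breakpoints form an infinite set — the cleanest route is to avoid counting breakpoints altogether and instead argue: since each of $\tilde a_i$, $\lambda_i$, $D$ is continuous on $[T_0,T]$ and satisfies $|f(t)-f(s)|\le L_0|t-s|$ whenever $I_s=I_t$, and since for general $s<t$ one can find (by continuity and a compactness/connectedness argument on the closed sets $E_I$) that the total variation of $f$ over $[s,t]$ is at most $L_0(t-s)$, the global Lipschitz bound follows; I expect this to be the main obstacle to write out carefully, though it is conceptually standard.
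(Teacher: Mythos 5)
Your route is genuinely different from the paper's. The paper does not argue from the explicit KKT formulae at all; it invokes Hager's Lipschitz stability theorem for parametric quadratic programs (\cite[Theorem~3.1]{Hag79}) and reduces the whole lemma to verifying Hager's hypotheses A1--A3 on $[T_0,T]$. In the present setting A3 is easy: the constraint matrix is a row-submatrix of $-\mathcal I_d$, so the gradient condition holds with $\beta=1$, and the curvature condition holds because $\Sigma^{-1}(t)$ is uniformly positive definite on the compact interval (eigenvalues depend continuously on $t$ and stay bounded away from $0$). Hager's theorem then delivers Lipschitz continuity of $\tilde{\vk a}(\cdot)$ and $\vk\lambda(\cdot)$ in one stroke, and $D$ and $\lambda_i/D$ follow by the same product/quotient estimates you give. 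The point is that Hager's result already absorbs all the difficulty coming from changes of the active set, which is exactly the part of your proposal that is left open.

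And that is where your proposal has a real gap. The stitching step across the strata $E_I$ is not written out, and the first version you describe (``subdivide at the finitely many points where $I$ changes'') does not work as stated: there is no a priori reason the active set changes only finitely often, and the sets $E_I$ are in general neither open nor closed (they are intersections of open sets $\{\lambda_i>0\}$ and closed sets $\{\lambda_j=0\}$), so the ``relatively closed'' assertion is also unjustified. Your fallback --- that continuity plus a uniform per-stratum Lipschitz constant implies a global Lipschitz bound --- is in fact true, but it needs its own argument; here is one way to close it. Extend the formula for stratum $I$ to a Lipschitz-$L_I$ function $g_I$ on all of $[T_0,T]$, set $L_0=\max_I L_I$, and note that for every $t$ the continuous function $f\in\{\tilde a_i,\lambda_i\}$ satisfies $f(t)=g_{I_t}(t)$. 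Given $s<t$, put $\phi(r)=f(r)-f(s)-L_0(r-s)$ and suppose $\phi(t)>0$; let $r^*=\sup\{r\le t:\phi(r)\le 0\}$, so $\phi(r^*)=0$ and $\phi>0$ on $(r^*,t]$. Pick $r_k\downarrow r^*$ with a constant index $j$ such that $f(r_k)=g_j(r_k)$; continuity gives $g_j(r^*)=f(r^*)$, whence $\phi(r_k)\le g_j(r^*)+L_0(r_k-r^*)-f(s)-L_0(r_k-s)=\phi(r^*)=0$, a contradiction. A symmetric argument gives the lower bound. This, together with your (correct) verification that $\Sigma(t)$, $\Sigma_{II}(t)$ are $C^1$ and uniformly nonsingular on $[T_0,T]$ and that $D$ is bounded below by $D(T)>0$ (\nelem{prop:proj1} and \nelem{prop:D}), would complete your version of the proof. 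Without it, the proposal is not a proof; with it, it is a legitimate and more self-contained alternative to the paper's citation of \cite{Hag79}, at the cost of being considerably longer.
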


\begin{proof}
Let $T_0\in(0,T]$ be fixed. According to \cite[Theorem~3.1]{Hag79}, $\tilde a_i(\cdot)$ and $\lambda_i(\cdot)$ are Lipshitz continuous, provided that conditions \cite[A1-A3]{Hag79} are satisfied. First, let us note that the conditions A1-A2 are clearly satisfied in our setting so will will focus only on condition A3. In order to state what is condition A3, let $M(t)\in\R^{I(t)\times d}$ such that $M(t) := (-\mathcal I_d)_{I(t)}$, where $(-\mathcal I_d)_{I(t)}$ is the submatrix of $-\mathcal I_d$ consisting of rows corresponding to the indices of $I(t)$. Then, condition  \cite[A3]{Hag79} states that that there exist $\alpha,\beta>0$ such that for all $t\in[T_0,T]$:
\begin{itemize}
\item[(i)] $x^\top \Sigma^{-1}(t) x \geq \alpha \|x\|^2$ for all $x\in\R^d$ satisfying $M(t)x = 0$,
\item[(ii)] $\|M(t)^\top x\| \geq \beta\|x\|$ for all $x\in\R^{I(t)}$
\end{itemize}
Since $\|M(t)^\top x\| = \|x\|$, then (ii) is satisfied with $\beta=1$. To see that (i) holds, we have
\bqny{
x^\top \Sigma^{-1}(t)x \geq \sigma_{1}(t)\|x\|^2
}
where $\sigma_{1}(t)$ is the smallest  eigenvalue of $\Sigma^{-1}(t)$. The matrix $\Sigma^{-1}(t)$ is symmetric and positive definite for $t>0$, thus it has only real positive eigenvalues $\sigma_1(t)\ldots \sigma_d(t)$. So the related characteristic polynomial $p_{\Sigma^{-1}(t)}$ has continuous monoms and always has $d$ real solutions. It means that we can order the eigenvalues $\sigma_1(t)\ldot \sigma_d(t)$ in such way that this functions will be continuous by $t$ and thus we can take $\alpha := \min_{t\in[T_0,T]} \sigma_1(t)>0$, which concludes the proof of (i) and of the Lipshitz continuity of $\tilde a_i(\cdot)$ and $\lambda_i(\cdot)$.

Now, the fact that $\tilde a_i(\cdot)$ is Lipshitz continuous immediately implies the Lipshitz continuity of $D(\cdot)$. Lastly, we need to show the Lipshitz continuity of $\lambda_i(t)/D(t)$. For $t,s\in[T_0,T]$ we have
\begin{align*}
\abs{\frac{\lambda_i(s)}{D(s)}-\frac{\lambda_i(t)}{D(t)}} \leq \abs{\frac{\lambda_i(s)-\lambda_i(t)}{D(s)}} + \lambda_i(t) \abs{\frac{D(t)-D(s)}{D(t)D(s)}}.
\end{align*}
The proof is concluded by the Lipshitz continuity of $\lambda_i(\cdot)$, $D(\cdot)$, and by noting that $\min_{t\in[T_0,T]} D(t) = D(T) > 0$; see Lemma \ref{prop:D}.
\end{proof}

In the following, $\argmin_{t\in [a,b]} f(t)$ is the smallest minimizer of function $f(\cdot)$ over set $[a,b]$.
For $t\in(0,T]$ we define function
\begin{equation}\label{def:G}
\mathcal G(t) := \frac{\sprod{\vk \lambda(t),\vk c}t}{D(t)}.
\end{equation}

\begin{lem}\label{lem:bound}
Let $\vk X(t)=A\vk Z(t),~t\in[0,T]$ be such that $\vk Z(t)$ satisfies {\bf B0}-{\bf BII} and $\vk a\in\R^d\setminus(-\infty,0]^d$. Then, for any $T_1\in(0,T)$ there exists a constant $C>0$ such that for any $T_1 \leq L < R \leq T$ and $u > -\mathcal G(T^*)$
\bqny{
\pk{\exists_{t\in[L,R]}:\vk X(t)-\vk c t>u\vk a}\leq C(u+\mathcal G(T^*))\exp\bigl(-D(R)(u+\mathcal G(T^*))^2/2\bigr)
}
where $T^* := \argmin_{t\in[L,R]} \mathcal G(t)$.
\end{lem}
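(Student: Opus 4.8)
The plan is to bound the probability $\pk{\exists_{t\in[L,R]}:\vk X(t)-\vk c t>u\vk a}$ by reducing it to a one-dimensional crossing event for a suitable linear combination of the coordinates, and then applying a Borell--TIS / Piterbarg-type inequality on the interval $[L,R]$. The natural linear functional to use is the one dictated by the quadratic program: for $t\in[L,R]$, the event $\{\vk X(t)-\vk c t > u\vk a\}$ is contained in the half-space event determined by the dual vector $\vk\lambda(t)$, namely $\{\sprod{\vk\lambda(t),\vk X(t)} > \sprod{\vk\lambda(t),\vk c}t + u\sprod{\vk\lambda(t),\vk a}\}$ (here we use $\vk\lambda(t)\ge\vk 0$, which follows from Lemma \ref{lem:quadratic_programming}). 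By Lemma \ref{lem:quadratic_programming}(iii), $\sprod{\vk\lambda(t),\vk a} = \vk a^\top\Sigma^{-1}(t)\tilde{\vk a}(t) = D(t)$, so the threshold becomes $D(t)u + \sprod{\vk\lambda(t),\vk c}t = D(t)\bigl(u + \mathcal G(t)\bigr)$, with $\mathcal G$ as in \eqref{def:G}. The variance of $\sprod{\vk\lambda(t),\vk X(t)}$ is $\vk\lambda(t)^\top\Sigma(t)\vk\lambda(t) = \tilde{\vk a}(t)^\top\Sigma^{-1}(t)\tilde{\vk a}(t) = D(t)$, again by the same identity. Hence, after normalizing, the event is contained in $\{\,\overline{Y}_t > \sqrt{D(t)}\,(u+\mathcal G(t))\,\}$ where $\overline{Y}_t := \sprod{\vk\lambda(t),\vk X(t)}/\sqrt{D(t)}$ is a unit-variance Gaussian process on $[L,R]$.

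Next I would uniformize the threshold over $[L,R]$. Since $D(\cdot)$ is strictly decreasing (Lemma \ref{prop:D}), $\inf_{t\in[L,R]}D(t) = D(R)$; and since $\mathcal G(\cdot)$ is continuous on $[T_1,T]$, it attains its minimum at $T^* := \argmin_{t\in[L,R]}\mathcal G(t)$. For $u > -\mathcal G(T^*)$ the quantity $u+\mathcal G(t) \ge u + \mathcal G(T^*) > 0$ for all $t\in[L,R]$, so
\[
\sqrt{D(t)}\,(u+\mathcal G(t)) \ \ge\ \sqrt{D(R)}\,(u+\mathcal G(T^*)) \ =:\ m(u),
\]
and therefore
\[
\pk{\exists_{t\in[L,R]}:\vk X(t)-\vk c t>u\vk a}\ \le\ \pk{\sup_{t\in[L,R]}\overline{Y}_t > m(u)}.
\]
Now $\overline{Y}$ is a centered unit-variance Gaussian process with a.s. continuous sample paths on the compact interval $[L,R]\subset[T_1,T]$ (continuity of the coefficient $\vk\lambda(t)/\sqrt{D(t)}$ in $t$ follows from Lemma \ref{lem:lipshtz_continuity} together with the positivity of $D$; continuity of $\vk X(t)$ is part of the standing assumptions). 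By Piterbarg's inequality \cite[Thm 8.1]{Pit96} (or a Borell--TIS bound combined with a metric-entropy estimate), there is a constant $C_0>0$, depending only on $T_1$, $A$, and the processes $Z_i$, such that for all $x>0$
\[
\pk{\sup_{t\in[L,R]}\overline{Y}_t > x}\ \le\ C_0\, x^{\kappa}\, e^{-x^2/2}
\]
for some fixed exponent $\kappa$; bounding $m(u)^\kappa e^{-m(u)^2/2} \le C_1\,(u+\mathcal G(T^*))\exp\bigl(-D(R)(u+\mathcal G(T^*))^2/2\bigr)$ (absorbing powers of $u$ into a linear factor at the cost of enlarging the constant, which is permitted since the Gaussian factor dominates) gives the claimed bound.

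The main obstacle is making the Piterbarg-type bound genuinely uniform over the family of intervals $[L,R]$ with $T_1\le L<R\le T$, i.e. getting a single constant $C$ that works for all such $L,R$. This requires a uniform control of the local behaviour (Hölder/variance modulus) of the processes $\overline{Y}$ on $[L,R]$, uniform in the choice of interval. One clean way is to bound the supremum over $[L,R]$ by the supremum over the full interval $[T_1,T]$ and to note that the coefficient vector $t\mapsto \vk\lambda(t)/\sqrt{D(t)}$ is Lipschitz on $[T_1,T]$ (Lemma \ref{lem:lipshtz_continuity}), so that the covariance of $\overline{Y}$ has a modulus of continuity on $[T_1,T]$ controlled in terms of the moduli of the $v_i$'s; Piterbarg's inequality on the fixed interval $[T_1,T]$ then yields one constant $C_0$, after which the uniform threshold lower bound $m(u)$ does the rest. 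A secondary technical point is verifying that $\overline{Y}$ is nondegenerate (its variance is identically $1$ by construction, so this is automatic) and that the exponent $\kappa$ can be taken independent of the interval, which again follows from the uniform modulus estimate.
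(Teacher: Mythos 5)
Your proposal takes essentially the same route as the paper: reduce to the half-space event determined by the dual vector $\vk\lambda(t)$, identify $\sprod{\vk\lambda(t),\vk a} = D(t)$ so that the threshold becomes $D(t)(u+\mathcal G(t))$, uniformize over $t\in[L,R]$ using the monotonicity of $D(\cdot)$ from Lemma \ref{prop:D} and the definition of $T^*$, use Lipschitz continuity of the coefficient functions (Lemma \ref{lem:lipshtz_continuity}) to control the increments of the scalar process, and conclude with Piterbarg's inequality. The only stylistic difference is normalization: you divide by $\sqrt{D(t)}$ to get a unit-variance process $\overline Y_t$, whereas the paper divides by $D(t)$, obtaining $Y(t)$ with variance $1/D(t)$ (maximized at $t=R$). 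Both conventions work, and your version has the minor advantage that the variance is constant in $t$.

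There is one step, however, where your stated justification is not valid. You write Piterbarg's inequality as $\pk{\sup_{t\in[L,R]}\overline Y_t > x}\le C_0 x^\kappa e^{-x^2/2}$ with $\kappa$ unspecified, and then assert $m(u)^\kappa e^{-m(u)^2/2}\le C_1(u+\mathcal G(T^*))e^{-D(R)(u+\mathcal G(T^*))^2/2}$ by ``absorbing powers of $u$ into a linear factor at the cost of enlarging the constant, which is permitted since the Gaussian factor dominates.'' This cannot be right as written: with $m(u)=\sqrt{D(R)}(u+\mathcal G(T^*))$ the exponentials on the two sides are identical and cancel, so the inequality reduces to $D(R)^{\kappa/2}(u+\mathcal G(T^*))^{\kappa-1}\le C_1$, which fails as $u\to\infty$ whenever $\kappa>1$. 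There is no absorption available. The argument works only because the correct exponent is $\kappa=1$: Piterbarg's Theorem 8.1, applied with the Lipschitz-type bound on $\E{(\overline Y_t-\overline Y_s)^2}$, gives $\pk{\sup\overline Y > x}\le C(R-L)\,x^2\,\pk{\mathcal N(0,1)>x}$, and since $\pk{\mathcal N(0,1)>x}\le \frac{1}{\sqrt{2\pi}\,x}e^{-x^2/2}$ already carries a $1/x$, the net power of $x$ is $1$. This is precisely how the paper obtains the linear prefactor, and you should state the Piterbarg bound in the form involving $\pk{\mathcal N(0,1)>x}$ rather than the bare exponential so that the linear power comes out automatically.
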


\begin{proof}
Recall that $\Sigma(t) := \var(\vk X(t))$ and $\tilde{\vk a}(t)$ is the solution to the quadratic programming problem $\Pi_{\Sigma(t)}(\vk a)$ for each $t>0$, as in \eqref{eq:QP}. According to Lemma~\ref{lem:quadratic_programming}(iii), for $t>0$ we have $D(t) > 0$. Hence
\begin{align}
\nonumber\pk{\exists_{t\in[L,R]}:\vk X(t)-\vk ct>u\vk a}&\leq\pk{\exists_{t\in[L,R]}:\sprod{\vk\lambda(t),(\vk X(t)-\vk c t)}>\sprod{\vk\lambda(t),\vk a} u}\\
\label{eq:lembound_1}&=\pk{\exists_{t\in[L,R]}:\frac{\sprod{\vk\lambda(t),\vk X(t)}}{D(t)}>u+\mathcal G(t)},
\end{align}
with $\mathcal G(\cdot)$ defined in \eqref{def:G}. In the following let $Y(t) := \frac{\sprod{\vk\lambda(t),\vk X(t)}}{D(t)}$. Using the inequality $(\sum_{i=1}^d a_i)^2 \leq d\sum_{i=1}^d a_i^2$ we find that
\begin{align*}
\E{\left(Y(t)-Y(s)\right)^2}&\leq 2\E{\left(\frac{\sprod{\vk\lambda(t),\vk X(t)}}{D(t)}-\frac{\sprod{\vk\lambda(s),\vk X(t)}}{D(s)}\right)^2}+2\E{\left(\frac{\sprod{\vk\lambda(s),\vk X(t)}}{D(s)}-\frac{\sprod{\vk\lambda(s),\vk X(s)}}{D(s)}\right)^2}\\
&\leq 2d\sum_{i=1}^{d}\left(\frac{\lambda_i(t)}{D(t)}-\frac{\lambda_i(s)}{D(s)}\right)^2v_i(t)+2d\frac{\lambda_i^2(s)}{D^2(s)}v_i(|t-s|).
\end{align*}
Now, the functions $v_i(\cdot)$ and $\lambda_i(\cdot)/D(\cdot)$ are Lipshitz continuous due to {\bf B0} and Lemma \ref{lem:lipshtz_continuity} respectively, so there exists $C_1>0$ such that $|v_i(|t-s|)| \leq C_1|t-s|$, and $|\frac{\lambda_i(t)}{D(t)}-\frac{\lambda_i(s)}{D(s)}| \leq C_1|t-s|$ for all $i\in\{1,\ldots,d\}$, thus
\begin{align*}
\E{\left(Y(t)-Y(s)\right)^2} \leq 2d^2C_1^2|t-s|^2 + 2d\max_{i\in\{1,\ldots,d\}}\max_{t\in[L,R]}\left\{\frac{\lambda_i^2(t)}{D^2(t)}\right\} C_1|t-s|.
\end{align*}
We conclude that there exists $C_2 > 0$ such that
\begin{align*}
\E{\left(Y(t)-Y(s)\right)^2} \leq C_2|t-s|
\end{align*}
for all $t,s\in [L,R]$. Since $\var(Y(t)) = \var(\tilde{\vk a}^\top(t)\Sigma^{-1}\vk X(t))/D^2(t) = 1/D(t)$ and $D(t)$
is strictly decreasing, see Lemma~\ref{prop:D}, then the maximum of $\var(Y(t))$ over $[L,R]$ is attained at $t=R$.
According to Piterbarg inequality \cite[Thm 8.1]{Pit96}, there exists a constant $C_3$ such that for any $0<L<R\leq T$ we have
\begin{equation}\label{eq:lembound_2}
\pk{\exists_{t\in[L,R]}:Y(t)>u}\leq C_3(R-L) u^2 \pk{Y(R) > u}
\end{equation}
for $u>0$. Finally, since $\pk{\mathcal{N}(0,1)>u} \leq \frac{1}{\sqrt{2\pi}u}e^{-u^2/2}$ for $u>0$, then upon combining \eqref{eq:lembound_1}, \eqref{eq:lembound_2} we obtain
\begin{align*}
\pk{\exists_{t\in[L,R]}:\vk X(t)-\vk ct>u\vk a} \leq C_3(R-L)\sqrt{\frac{D(R)}{2\pi}}(u+\mathcal G(T^*))\exp\{-D(R)(u+\mathcal G(T^*))^2/2\}.
\end{align*}
Since function $D(\cdot)$ is decreasing (Lemma~\ref{prop:D}),
we conclude the proof by taking $C := C_3(R-L)\sqrt{\frac{D(T_1)}{2\pi}}$.
\end{proof}

\begin{lem}\label{lem:exp.bound}
Let $\textbf{B0-BII}$ hold and let $T_1\in(0,T]$ and $T_1 \leq L(u) < R(u) \leq T$. If either of the following two conditions is satisfied:
\begin{itemize}
\item[(i)] $R(u)\to T$, and $u(T-R(u))\to\infty$, or
\item[(ii)] $T-L(u)=o(u(T-R(u)))$, and $u^2(T-R(u))/\ln(u) \to \infty$,
\end{itemize}
then
\begin{align*}
\lim_{u\to0}\frac{\pk{\exists_{t\in[L(u),R(u)]}:\vk X(t)-\vk ct>u\vk a}}{\pk{\vk X(T) - \vk c T > u\vk a}} = 0.
\end{align*}
as $u\to\infty$.
\end{lem}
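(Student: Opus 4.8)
The plan is to bound the numerator by Lemma~\ref{lem:bound}, to substitute the exact tail asymptotics of $\pk{\vk X(T)-\vk cT>u\vk a}$ for the denominator, and to verify that the quotient of the two Gaussian exponents tends to $-\infty$ fast enough to absorb the leftover polynomial factor. For the denominator, proceed exactly as in the proof of Lemma~\ref{lem:small}: Lemma~\ref{prop:proj1} applied with $\Sigma=\Sigma(T)$ gives constants $C>0$ and $k\in\N$ with $\pk{\vk X(T)-\vk cT>u\vk a}\sim Cu^{-k}\varphi_T(\tilde{\vk a}(T)u+\vk cT)$, where $\varphi_T$ is the density of $\vk X(T)$. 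Writing out this Gaussian density, using $\tilde{\vk a}(T)^\top\Sigma^{-1}(T)\tilde{\vk a}(T)=D(T)$ and $\tilde{\vk a}(T)^\top\Sigma^{-1}(T)\vk c=\sprod{\vk\lambda(T),\vk c}$, and completing the square in $u$ (recall $\mathcal G(T)=\sprod{\vk\lambda(T),\vk c}T/D(T)$ from \eqref{def:G}), one obtains a positive constant $C_0$ with
\[
\pk{\vk X(T)-\vk cT>u\vk a}\sim C_0\,u^{-k}\exp\!\bigl(-\tfrac12 D(T)(u+\mathcal G(T))^2\bigr),\qquad u\to\infty.
\]

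For the numerator, apply Lemma~\ref{lem:bound} on the interval $[L(u),R(u)]$, which is admissible for all large $u$ because $\mathcal G$ is continuous, hence bounded by some $M$, on $[T_1,T]$: with $T^*:=\argmin_{t\in[L(u),R(u)]}\mathcal G(t)$,
\[
\pk{\exists_{t\in[L(u),R(u)]}:\vk X(t)-\vk ct>u\vk a}\le C(u+\mathcal G(T^*))\exp\!\bigl(-\tfrac12 D(R(u))(u+\mathcal G(T^*))^2\bigr).
\]
Since the prefactor is $O(u)$, the ratio in the statement is at most $C'u^{k+1}\exp(-\Delta(u))$ with $\Delta(u):=\tfrac12 D(R(u))(u+\mathcal G(T^*))^2-\tfrac12 D(T)(u+\mathcal G(T))^2$, and it suffices to show $\Delta(u)-(k+1)\ln u\to\infty$. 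Here two deterministic estimates enter: (a) since $\dot D(T)<0$ by Lemma~\ref{prop:D} and $D$ is continuous and strictly decreasing on $[T_1,T]$, there is $c_0>0$ with $D(t)-D(T)\ge c_0(T-t)$ for all $t\in[T_1,T]$, in particular $D(R(u))-D(T)\ge c_0(T-R(u))$; (b) by Lemma~\ref{lem:lipshtz_continuity} the map $\mathcal G$, being a sum of products of the Lipschitz functions $t\mapsto t$ and $t\mapsto\lambda_i(t)/D(t)$, is Lipschitz on $[T_1,T]$, hence $\abs{\mathcal G(T^*)-\mathcal G(T)}\le C_1(T-L(u))$.

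Writing $\varepsilon_u:=\mathcal G(T^*)-\mathcal G(T)$ and expanding,
\[
2\Delta(u)=(D(R(u))-D(T))(u+\mathcal G(T))^2+D(R(u))\bigl(2(u+\mathcal G(T))\varepsilon_u+\varepsilon_u^2\bigr).
\]
In case (ii), $\abs{\varepsilon_u}\le C_1(T-L(u))=o(u(T-R(u)))$ while $D(R(u))\le D(T_1)$, so the second group is $o(u^2(T-R(u)))$, whereas the first term is at least $c_0(T-R(u))(u-M)^2\ge\tfrac14 c_0u^2(T-R(u))$ for large $u$; hence $\Delta(u)\ge c_2u^2(T-R(u))$ eventually. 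In case (i) the interval $[L(u),R(u)]$ is uncontrolled, but $\mathcal G(T^*),\mathcal G(T)\in[-M,M]$ give $2(u+\mathcal G(T))\varepsilon_u+\varepsilon_u^2=O(u)$, so $2\Delta(u)\ge c_0(T-R(u))u^2-O(u)=c_0u\bigl(u(T-R(u))-O(1)\bigr)\ge\tfrac12 c_0u^2(T-R(u))$ once $u(T-R(u))$ is large, which holds by hypothesis (i); again $\Delta(u)\ge c_2u^2(T-R(u))$ eventually. Finally, $u^2(T-R(u))/\ln u\to\infty$ — directly in case (ii), and as $(u/\ln u)\cdot u(T-R(u))\to\infty$ in case (i) — so $\Delta(u)-(k+1)\ln u\to\infty$, whence the ratio, at most $C'u^{k+1}\exp(-\Delta(u))$, tends to $0$.

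The delicate point is this exponent estimate: the gain $D(R(u))-D(T)\asymp T-R(u)$, coming from $\dot D(T)<0$, once multiplied by $u^2$ must survive two competing errors — the linear-in-$u$ cross terms produced by completing the square, controlled in case (i) precisely by $u(T-R(u))\to\infty$, and the displacement of the minimiser $T^*$ of $\mathcal G$ away from $T$, controlled in case (ii) precisely by $T-L(u)=o(u(T-R(u)))$ — and what remains must still outgrow the surviving order-$u^{k+1}$ polynomial, which is exactly why the $\ln u$ appears in the hypotheses.
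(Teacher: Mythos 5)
Your proposal is correct and takes essentially the same route as the paper: bound the numerator via Lemma~\ref{lem:bound}, expand the denominator via Lemma~\ref{prop:proj1}, and compare the two Gaussian exponents using $\dot D(T)<0$ (Lemma~\ref{prop:D}) and the Lipschitz continuity from Lemma~\ref{lem:lipshtz_continuity}. The only cosmetic difference is that you complete the square in the denominator first and work with $\Delta(u)=\tfrac12 D(R(u))(u+\mathcal G(T^*))^2-\tfrac12 D(T)(u+\mathcal G(T))^2$, whereas the paper keeps the raw quadratic form and reads off coefficients $h_2,h_1,h_0$; both decompositions produce the same two key estimates (a linear-in-$(T-R(u))$ gain in the leading coefficient and a Lipschitz bound on the cross term) and the same conclusion, and you make explicit the uniform bound $D(t)-D(T)\ge c_0(T-t)$ on $[T_1,T]$ that the paper uses implicitly in case (ii).
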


\begin{proof}
Using \nelem{prop:proj1}, we know that there exist some $C_1>0$, $k\in\N$ such that
\begin{align}\label{eq:lem:exp.bound1}
\pk{\vk X(T) - \vk c T > \vk au} \sim C_1u^{-k}\varphi_T(\tilde{\vk a}u + \vk c T),
\end{align}
as $u\to\infty$, where $\varphi_T$ is the density of $\vk X(T)$.
According to Lemma~\ref{lem:bound} there exist $C_2 > 0, K\in\R$ such that,
for all $u > -\mathcal G(T^*(u))$ we have
\begin{equation}\label{eq:lem:exp.bound2}
\pk{\exists_{t\in[L(u),R(u)]}:\vk X(t)-\vk c t>u\vk a}\leq C_2(u+\mathcal G(T^*(u))\exp\bigl(-D(R(u))(u+\mathcal G(T^*(u)))^2/2\bigr),
\end{equation}
where $K(\cdot)$ is defined in Lemma \ref{lem:bound} and $T^*(u) := \argmin_{t\in[L(u),R(u)]} \mathcal G(T)$. From now on, we take $u > -\inf_{t\in[T_1,T]} \mathcal G(t)$, which is finite due to Lemma \ref{lem:lipshtz_continuity}.

For brevity, in the following we denote $\Sigma := \Sigma(T)$, $\vk{\tilde a} := \vk{\tilde a}(T)$. In light of \eqref{eq:lem:exp.bound1} and \eqref{eq:lem:exp.bound2}, it suffices to show that, for any $\beta \in\R$ we have
\begin{equation}\label{eq:to_prove:lem_exp}
u^\beta\frac{\exp(-D(R(u))(u+\mathcal G(T^*(u)))^2/2)}{\exp\left(-(\tilde{\vk a}u+\vk cT)^\top\Sigma^{-1}(\tilde{\vk a}u+\vk cT)/2\right)} = \exp\Big(-\big(u^2h_2(u) + uh_1(u) + h_0(u)\big) + \beta\ln(u)\Big) \to 0,
\end{equation}
as $u\to\infty$, where
\begin{align*}
h_2(u) = \frac{1}{2}\Big(D(R(u))-D(T)\Big), \quad h_1(u) = D(R(u))\mathcal G(T^*(u))-D(T)\mathcal G(T), \quad h_0(u) := \frac{1}{2}\Big(D(R(u))K^2(T^*(u))-T^2\vk c^\top\Sigma^{-1}\vk c\Big).
\end{align*}
We notice that functions $|h_i(u)|$ are all bounded for $u$ large enough.

Suppose that $L(u), R(u)$ satisfy conditions (i). Due to the continuity of $D(\cdot)$, we have that $D(R(u))\to D(T)$. Using the assumption {\bf B1}, that $D(t)$ is differentiable at the point $t=T$, with $\dot D(T) < 0$, we have
\begin{align*}
-uh_2(u) - h_1(u) = u(T-R(u)) \cdot \frac{D(R(u)) - D(T)}{2(R(u)-T)} - h_1(u) \to -\infty,
\end{align*}
which implies \eqref{eq:to_prove:lem_exp} under conditions in item (i).

Next, suppose that $L(u), R(u)$ satisfy  conditions (ii). We have
\bqny{
h_1(u) = \tfrac{D(R(u))}{D(T^*(u))} \cdot \Big(D(T^*(u))\mathcal G(T^*(u)) - D(T)\mathcal G(T)\Big) + \tfrac{D(T)}{D(T^*(u))}\mathcal G(T) \Big(D(R(u)) - D(T^*(u))\Big).
}
Functions $D(t)$ and $D(t)\mathcal G(t) = \sprod{\vk{\lambda}(t),c}t$ are Lipshitz continuous due to Lemma \ref{lem:lipshtz_continuity} and the fact that sums and products of Lipshitz continuous functions are Lipshitz continuous. This implies that there exists $C_3>0$ such that
\begin{align*}
\Big|D(T^*(u))\mathcal G(T^*(u)) - D(T)\mathcal G(T)\Big| \leq C_3|T-T^*(u)| \leq C_3|T-L(u)|,
\end{align*}
as well as
\begin{align*}
\Big|D(R(u)) - D(T^*(u))\Big| \leq C_3|R(u)-T^*(u)| \leq C_3|T-L(u)|.
\end{align*}
Hence, there exists $C_4>0$ such that $|h_1(u)| \leq C_4|T-L(u)|$ for all $u$ large enough and
\begin{align*}
-u^2h_2(u) - uh_1(u) \leq u^2(T-R(u)) \cdot \left[\frac{D(R(u)) - D(T)}{2(R(u)-T)} + C_4 \cdot \frac{T-L(u)}{u(T-R(u))}\right]
\end{align*}
Since $D(R(u))<D(T)$ and $\dot D(T) < 0$ and $T-L(u)=o(u(T-R(u)))$ then the term in the square brackets above is eventually negative and bounded away from $0$ for $u$ large enough. Finally, \eqref{eq:to_prove:lem_exp} follows from $u^2(T-R(u))/\ln(u) \to \infty$.
\end{proof}

\begin{proof}[Proof of Lemma \ref{lem:mid}]
Consider the following upper bound
\begin{eqnarray*}
\lefteqn{\frac{\pk{\exists_{t\in[T_1,T-Lu^{-2}\ln^2 u]}:\vk X(t)-\vk c t>u\vk a}}{\pk{\vk X(T)-\vk c T>u\vk a}}\leq \frac{\pk{\exists_{t\in[T_1,T-Lu^{-1}\ln u]}:\vk X(t)-\vk c t>u\vk a}}{\pk{\vk X(T)-\vk c T>u\vk a}}}\\
&& + \frac{\pk{\exists_{t\in[T-Lu^{-1}\ln u,T-Lu^{-1}\ln^{-1} u]}:\vk X(t)-\vk c t>u\vk a}}{\pk{\vk X(T)-\vk c T>u\vk a}} + \frac{\pk{\exists_{t\in[T-Lu^{-1}\ln^{-1} u],T-Lu^{-2}\ln^2 u]}:\vk X(t)-\vk c t>u\vk a}}{\pk{\vk X(T)-\vk c T>u\vk a}}.
\end{eqnarray*}
Now, the first and second term above satisfy condition (i) of Lemma~\ref{lem:exp.bound},
while the third term satisfies condition (ii) of Lemma~\ref{lem:exp.bound}.
Thus the righthand side of the above inequality converges to $0$, as $u\to\infty$.
\end{proof}

\subsection{Proofs of Lemma \ref{lem:gap} and Lemma \ref{lem:A}}
Before proceeding to the proof of Lemma \ref{lem:gap} and Lemma \ref{lem:A} we need some
auxiliary lemmas.
The following result generalizes \cite[Lemma~5.3]{debicki2020extremes}.

\begin{lem}\label{lem:int_repr}
For any $\vk f := (f_1,\ldots,f_d)\in R^d$
\begin{align*}
\int_{\R^d}\mathbb{I}\left\{\exists_{t\in[0,L]}:~\vk x<\vk f t\right\}e^{\sum_{i=1}^d x_i}\td{\vk x} = \begin{cases}
1+\sum_{i=1}^d f_i^+L,\qquad &\sum_{i=1}^d f_i = 0, \\
\frac{\sum_{i=1}^d f_i^-}{\sum_{i=1}^d f_i}+\frac{\sum_{i=1}^d f_i^+}{\sum_{i=1}^d f_i}e^{\sum_{i=1}^d f_iL},\qquad &\sum_{i=1}^d f_i\not = 0,
\end{cases}
\end{align*}
where $f_i^+ := \max\{f_i, 0\}$ and $f_i^- := \min\{f_i, 0\}$.
\end{lem}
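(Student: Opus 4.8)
The plan is to reduce the $d$-dimensional integral to a one-dimensional one by integrating out along the direction $\vk 1 = (1,\ldots,1)^\top$, which is the gradient direction of the exponent $\sum_{i=1}^d x_i$. Concretely, I would change variables to $z := \sum_{i=1}^d x_i$ (the coordinate along $\vk 1$) together with any $d-1$ complementary coordinates spanning the hyperplane $\{\sum x_i = 0\}$. Since the indicator $\mathbb{I}\{\exists_{t\in[0,L]}: \vk x < \vk f t\}$ is invariant under the flow $\vk x \mapsto \vk x + r\vk 1$ composed with $t \mapsto t$ shifted appropriately — more precisely, translating $\vk x$ by $r\vk 1$ and noting the event $\vk x < \vk f t$ for some $t\in[0,L]$ is monotone in the right way — the inner region in the complementary coordinates is an unbounded set, and the key observation is that for each fixed value of the complementary coordinates, the set of admissible $z$ is a half-line $(-\infty, g)$ where $g$ depends on $\vk f$, $L$, and the complementary coordinates. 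Integrating $e^{z}$ over $(-\infty,g)$ gives $e^{g}$, and then the remaining $(d-1)$-dimensional integral should telescope into the closed form stated.

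A cleaner route, and the one I would actually carry out, is to argue directly on the structure of the event. Write $\tau(\vk x) := \inf\{t \ge 0 : \vk x < \vk f t\}$ when this set is nonempty (and $+\infty$ otherwise); then $\exists_{t\in[0,L]}: \vk x < \vk f t$ holds iff $\tau(\vk x) \le L$. Observe that $\vk x < \vk f t$ means $x_i < f_i t$ for all $i$, i.e. $t > x_i / f_i$ when $f_i > 0$, $t < x_i/f_i$ when $f_i < 0$, and $x_i < 0$ when $f_i = 0$. So the event is: $\max_{i : f_i > 0} (x_i/f_i)^+ < \min_{i: f_i < 0} (x_i/f_i)$ (with the usual conventions when one of the index sets is empty), that this common window intersects $[0,L]$, and $x_i < 0$ for all $i$ with $f_i = 0$. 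The plan is then to split the domain according to which constraint is binding. I would parametrize by $t$ itself: the event $\{\exists t\in[0,L]: \vk x < \vk f t\}$ can be written using the co-area / layer-cake idea as governed by whether $\vk x < \vk 0$ (then $t=0$ works) versus needing strictly positive $t$.

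The concrete computation I would do: by Fubini, $\int_{\R^d}\mathbb{I}\{\exists_{t\in[0,L]}:\vk x<\vk f t\}e^{\sum x_i}\td\vk x$. Split off the coordinates with $f_i = 0$: for those, the constraint is simply $x_i < 0$ regardless of $t$, contributing a factor $\prod_{i: f_i = 0}\int_{-\infty}^0 e^{x_i}\td x_i = 1$, so WLOG all $f_i \ne 0$. Now if $\sum f_i = 0$ but $\vk f \ne \vk 0$, there is both a positive and a negative $f_i$; I'd compute by first integrating in the $z = \sum x_i$ direction and observing the slice is a half-line whose endpoint is linear in the remaining variables, leading after the remaining integration to $1 + (\sum_i f_i^+)L$ (the "$+L$" term coming from the length of the time-window one can slide through). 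If $\sum f_i \ne 0$, the same slicing gives the two-term geometric-type answer $\frac{\sum f_i^-}{\sum f_i} + \frac{\sum f_i^+}{\sum f_i}e^{\sum f_i L}$, where the $t=0$ contribution (i.e. $\vk x < \vk 0$) gives $1 = \frac{\sum f_i^- + \sum f_i^+}{\sum f_i}\cdot\frac{\sum f_i}{\ldots}$ — more carefully, the $\vk x<\vk 0$ part integrates to $1$ and the extra part picked up by allowing $t\in(0,L]$ contributes $\frac{\sum f_i^+}{\sum f_i}(e^{\sum f_i L}-1)$, and these combine to the stated formula. I would verify the $d=1$ and $d=2$ cases explicitly as a sanity check and to pin down the boundary conventions, then present the general case by the slicing argument. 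The main obstacle I anticipate is bookkeeping: correctly handling the boundary between the "$t=0$ suffices" region $\{\vk x < \vk 0\}$ and its complement, getting the orientation of the half-lines right when integrating $e^z$ (one needs $\sum f_i > 0$ versus $< 0$ to control whether $e^{\sum f_i L}$ blows up or decays, but the formula is valid in both cases because it is written as a ratio), and making sure the degenerate case $\sum f_i = 0$ emerges as the correct limit. None of these is deep, but the case analysis must be laid out carefully to be rigorous.
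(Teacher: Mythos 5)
You correctly reduce the problem by peeling off the $f_i=0$ coordinates (they factor out and contribute $1$), and your characterization of the event as ``the window $[\max_{i:f_i>0}(x_i/f_i)^+,\ \min_{i:f_i<0} x_i/f_i]$ meets $[0,L]$'' is right. You also correctly identify the answer you want to land on, namely that the $\{\vk x<\vk 0\}$ region contributes $1$ and the complement contributes $\tfrac{S_+}{S}(e^{SL}-1)$ (with $S_\pm:=\sum f_i^\pm$, $S=\sum f_i$). But the crux --- actually computing that second contribution --- is never carried out. That is not mere bookkeeping; it is essentially the entire lemma.

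The $z=\sum x_i$ slicing strategy, as you describe it, is much less tractable than you suggest. The ``endpoint'' $g$ of the $z$-half-line is not linear in the complementary variables; it involves a $\min$ over the negative-$f_i$ ratios, a $\max$ over the positive-$f_i$ ratios, truncation by $L$, and a feasibility indicator. After the trivial $\int_{-\infty}^g e^{z}\,\td z=e^{g}$ step you are left with $\int e^{g(\vk y)}\,\td\vk y$ over a piecewise region that requires its own multi-case decomposition, and you give no argument that it closes in the stated form. Your first paragraph also leans on a claimed invariance of the indicator under $\vk x\mapsto \vk x + r\vk 1$, which is false in general (it would require $\vk f\parallel\vk 1$); what is true is only a one-sided monotonicity, which you later gesture at but do not use precisely. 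So the proposal as written is a plan plus the correct target formula, not a proof.

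The paper proceeds differently and more cleanly. After disposing of the two one-sign cases (all $f_i\ge 0$ gives $e^{SL}$, all $f_i<0$ gives $1$), it fixes the negative-$f_i$ coordinates, integrates out the nonnegative-$f_i$ coordinates (a product of half-line integrals) to obtain $\exp\{S_+(L\wedge m)\}$ where $m=\min_{i:f_i<0}\tfrac{x_i}{-f_i}$, and then observes that under the remaining weight $e^{-\sum x_i}\mathbb I\{x_i\ge 0\}$ the quantity $m$ is the minimum of independent exponentials, hence itself $\mathrm{Exp}(S_-)$. The whole thing collapses to the one-dimensional expectation $\E{e^{S_+(L\wedge Y)}}$ with $Y\sim\mathrm{Exp}(S_-)$, which is an elementary integral split at $y=L$ and immediately yields both branches of the formula (the $S=0$ branch as the degenerate case). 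If you want to keep your $\{\vk x<\vk 0\}$-versus-complement decomposition, you still need an explicit calculation for the complement; the exponential-minimum reduction is the device that makes it a three-line computation rather than a thicket of piecewise regions. I would either adopt that reduction or, if you insist on the $z$-slicing, actually write out the resulting $(d-1)$-dimensional integral and show it telescopes --- as it stands, the proof is not there.
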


\begin{proof}
Define $S_+ := \sum_{i=1}^d f_i^+$, $S_- := -\sum_{i=1}^d f_i^-$, and $S := \sum_{i=1}^d f_i = S_+ - S_-$. Without the loss of generality, let $k\in\{0,\ldots,d\}$ be such that $f_1\geq 0, \ldots, f_k \geq 0$, and $f_{k+1}<0,\ldots, f_{d}<0$.  We distinguish three cases: (i) $k=d$ (all $f_i$s are non-negative, which is equivalent to $S_- = 0$), (ii) $k=0$ (all $f_i$s are negative, which is equivalent to $S_+ = 0$), and (iii) $0 < k < d$. It can be easily seen that in case (i) we have
\begin{align*}
\int_{\R^d}\mathbb{I}\left\{\exists_{t\in[0,L]}:~\vk x<\vk f t\right\}e^{\sum_{i=1}^d x_i}\td\vk x = \prod_{i=1}^d\int_{-\infty}^{f_i L}e^{x_i}\td x_i = e^{SL},
\end{align*}
and in case (ii) we have
\begin{align*}
\int_{\R^d}\mathbb{I}\left\{\exists_{t\in[0,L]}:~\vk x<\vk f t\right\}e^{\sum_{i=1}^d x_i}\td\vk x = \prod_{i=1}^d\int_{0}^{\infty}e^{-x_i}\td x_i = 1.
\end{align*}
Till the end we consider case (iii). Let us define
\begin{align*}
Q_1 := \{\vk x\in\R^d : \forall_{i\in\{1,\ldots,d\}} \text{ if } f_i < 0 \text{ then } x_i < 0\}, \quad Q_2 := \{x\in\R^d :\exists_{i\in\{1,\ldots,d\}} \text{ if } f_i < 0 \text{ then } x_i \geq 0\},
\end{align*}
so that $Q_1 \cup Q_2 = \R^d$ and $Q_1 \cap Q_2 = \emptyset$. It can be seen that
\begin{align*}
\int_{Q_2}\mathbb{I}\left\{\exists_{t\in[0,L]}:~\vk x<\vk f t\right\}e^{\sum_{i=1}^d x_i}\td\vk x = 0.
\end{align*}
Furthermore, with $m := m(x_{k+1},\ldots, x_d) = \min\{\tfrac{x_{k+1}}{-f_{k+1}},\ldots, \tfrac{x_{d}}{-f_{d}}\}$, we have
\begin{align*}
\int_{Q_1}\mathbb{I}\left\{\exists_{t\in[0,L]}:~\vk x<\vk f t\right\}e^{\sum_{i=1}^d x_i}\td\vk x & = \int_0^\infty\cdots\int_0^\infty \left[\prod_{i=1}^k\int_{-\infty}^{f_i(L\wedge m)}e^{x_i}\td x_i\right] e^{-\sum_{i={k+1}}^d x_{i}}\,\td x_{k+1}\cdots\td x_d \\
& = \int_0^\infty\cdots\int_0^\infty \exp\left\{S_+(L \wedge m)\right\} e^{-\sum_{i={k+1}}^d x_{i}}\td x_{k+1}\cdots\td x_d.
\end{align*}
We recognize that $\exp\{-\sum_{i={k+1}}^d x_i\}\cdot\mathbb{I}\left\{x_i \geq 0\right\}$ is the density of
minimum of
$d-k$ independent exponential distributions with rate $1$;
using that such minimum is again exponentially distributed with rate $(d-k)$ 
we find that, with $Y\sim {\rm Exp}(S_-)$,
\begin{align*}
\int_{Q_1}\mathbb{I}\left\{\exists_{t\in[0,L]}:~\vk x<\vk f t\right\}e^{\sum_{i=1}^d x_i}\td x & = \E{e^{S_+(L \wedge Y)}} = S_- \int_0^\infty e^{S_+(L \wedge y)-S_- y}\td y \\
& = S_- \int_0^L e^{(S_+ - S_-)y}\td y + e^{S_+L}\int_L^\infty S_- e^{-S_-y}\td y \\
& = \begin{cases}
L\cdot S_- + e^{SL}, & S = 0 \\
\frac{S_-}{S} \cdot (e^{S\cdot L}-1) + e^{SL}, & \text{otherwise},
\end{cases}
\end{align*}
which completes the proof.
\end{proof}

\begin{lem}\label{lem:tau}
There exist $\bar\tau\in(0,T)$, $\lambda^*>0$, $\eta > 0$ such that:
\begin{itemize}
\item[(i)] $I_T \subseteq I_t$ for all $t\in[\bar\tau,T]$,
\item[(ii)] $\lambda_i(t)>\lambda^*$ for all $i\in I_t$, $t\in[\bar\tau,T]$, and
\item[(iii)] $\Sigma^{-1}(t)-\eta \mathcal{I}_d$ is positive definite for all $t\in[\bar\tau,T]$.
\end{itemize}
\end{lem}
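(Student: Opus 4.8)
All three assertions are continuity/compactness statements, so the plan is to read off $\bar\tau,\lambda^*,\eta$ from the regularity we already have: on every interval $[T_0,T]$ with $T_0\in(0,T)$ the functions $\tilde a_i(\cdot)$ and $\lambda_i(\cdot)$ are continuous (indeed Lipschitz) by Lemma~\ref{lem:lipshtz_continuity}, and $t\mapsto\Sigma^{-1}(t)$ is continuous because $\Sigma(t)=A\,{\rm diag}(\vk v(t))A^\top$ is symmetric positive definite for $t>0$. For part (i), recall from Lemma~\ref{lem:quadratic_programming} that $I_T$ is nonempty and finite and $\lambda_i(T)>0$ for every $i\in I_T$; by continuity pick, for each such $i$, some $\bar\tau_i\in(0,T)$ with $\lambda_i>0$ on $[\bar\tau_i,T]$, and set $\bar\tau:=\max_{i\in I_T}\bar\tau_i$. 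Then $\lambda_i(t)>0$ on $[\bar\tau,T]$ for all $i\in I_T$, i.e. $I_T\subseteq I_t$ there; moreover $\lambda^*_0:=\tfrac12\min_{i\in I_T}\min_{t\in[\bar\tau,T]}\lambda_i(t)$ is strictly positive (continuous function on a compact set, minimum over a finite index set), so $\lambda_i(t)>\lambda^*_0$ for all $i\in I_T$, $t\in[\bar\tau,T]$.

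For part (ii) the remaining point is that, after possibly shrinking $\bar\tau$, the active set does not grow, i.e. $I_t=I_T$ on $[\bar\tau,T]$ (then (ii) holds with $\lambda^*:=\lambda^*_0$). By (i) we already have $I_T\subseteq I_t$, so it suffices to prove $I_t\subseteq I_T$ near $T$. Split $J_T=\{1,\dots,d\}\setminus I_T$ into $J_T^{\circ}:=\{j\in J_T:\tilde a_j(T)>a_j\}$ and $U_T=\{j\in J_T:\tilde a_j(T)=a_j\}$; recall $\lambda_j(T)=0$ and $\tilde a_j(T)\ge a_j$ for $j\in J_T$. For $j\in J_T^{\circ}$, continuity of $\tilde a_j(\cdot)$ gives $\tilde a_j(t)>a_j$ on a left-neighbourhood of $T$, and complementary slackness for $\Pi_{\Sigma(t)}(\vk a)$ forces $\lambda_j(t)=0$ there, so $j\in J_t$. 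Intersecting these finitely many neighbourhoods with the one from (i) yields $I_T\subseteq I_t\subseteq I_T\cup U_T$ on $[\bar\tau,T]$. The hard part is to dispose of the degenerate indices $U_T$: one must show that, for $\bar\tau$ close enough to $T$, no $j\in U_T$ enters $I_t$ — a parametric quadratic-programming statement where the explicit form $\Sigma(t)=A\,{\rm diag}(\vk v(t))A^\top$, continuity of $\tilde{\vk a}(\cdot),\vk\lambda(\cdot)$, and a finite case distinction over the candidate active sets $S$ with $I_T\subseteq S\subseteq I_T\cup U_T$ (using that on $\{t:I_t=S\}$ one has $\vk\lambda_S(t)=\Sigma_{SS}^{-1}(t)\vk a_S$, a continuous function) have to be combined. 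This is the only non-routine step.

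For part (iii): on the compact interval $[\bar\tau,T]$ the matrix $\Sigma^{-1}(t)$ is symmetric positive definite and depends continuously on $t$, so its characteristic polynomial has continuous coefficients and $d$ real positive roots; ordering them continuously in $t$ (exactly as in the proof of Lemma~\ref{lem:lipshtz_continuity}) the smallest eigenvalue $\sigma_1(t)$ is a continuous strictly positive function, and I would take $\eta:=\tfrac12\min_{t\in[\bar\tau,T]}\sigma_1(t)>0$. Then for $x\ne\vk0$ and $t\in[\bar\tau,T]$ one has $x^\top(\Sigma^{-1}(t)-\eta\mathcal I_d)x\ge(\sigma_1(t)-\eta)\|x\|^2\ge\tfrac12\sigma_1(t)\|x\|^2>0$, giving (iii). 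To summarize, parts (i) and (iii) are straightforward continuity and compactness; the one delicate ingredient is the stabilisation $I_t=I_T$ near $T$ in part (ii), i.e. showing that the constraints active with zero multiplier at $T$ cannot become strictly active for $t$ just below $T$.
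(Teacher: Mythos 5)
Your proofs of (i) and (iii) coincide with the paper's: for (i) use continuity of each $\lambda_i(\cdot)$ with $i\in I_T$ (Lemma~\ref{lem:lipshtz_continuity}) to push the positivity $\lambda_i(T)>0$ to a left neighbourhood of $T$ and take the maximum of the finitely many thresholds; for (iii) order the eigenvalues of $\Sigma^{-1}(t)$ continuously and take the minimum of the smallest eigenvalue over the compact interval.

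For (ii), however, you have put your finger on a real gap, but one that is in the \emph{paper}, not merely in your attempt. The paper's proof verifies $\lambda_i(t)>\lambda^*$ only for indices $i\in I_T$ and then asserts that ``(ii) follows,'' whereas the statement quantifies over $i\in I_t$, which by (i) may a priori be strictly larger than $I_T$. As you correctly argue, if some $j\in U_T$ (so $\lambda_j(T)=0$) lies in $I_t$ for $t$ arbitrarily close to $T$, then by continuity $\lambda_j(t)\to0$, and no uniform $\lambda^*>0$ can exist; hence (ii) as written is \emph{equivalent} to the stabilisation $I_t=I_T$ on $[\bar\tau,T]$, which neither the paper nor your proposal establishes (you explicitly flag it as ``the only non-routine step'' without resolving it, and Lipschitz continuity of $\lambda_j$ alone does not preclude infinitely many sign changes of $\lambda_j$ near $T$). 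Your decomposition $J_T=J_T^\circ\cup U_T$ and the observation $I_T\subseteq I_t\subseteq I_T\cup U_T$ near $T$ are correct intermediate steps. The saving grace is that downstream (in the proof of Lemma~\ref{lem:dominant}) the lower bound $\lambda_i(\tau)>\lambda^*$ is only invoked for $i\in I=I_T$, so the weak form that the paper actually proves is what is actually used; the stronger quantifier ``$i\in I_t$'' in the statement of (ii) appears to be an overstatement. In short: (i), (iii), and your bookkeeping are fine; for (ii) you have correctly diagnosed an unresolved issue that is also present in the published proof, and the pragmatic fix is to weaken (ii) to $i\in I_T$.
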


\begin{proof}
According to Lemma~\ref{lem:quadratic_programming}, $i\in I_{t}$ if and only if $\lambda_i(t)>0$. Since $\lambda_i(t)$ is a continuous function for any $i\in\{1\ldot d\}$ (see  Lemma~\ref{lem:lipshtz_continuity}), then for any $i\in I_T$ there must exist $\tau_i<T$, and $\lambda_i^*>0$ such that $\lambda_i(t) > \lambda_i^*$ for all $t\in[\tau_i,T]$. for all $t\in[\tau_i,T]$ we have $\lambda_i(t)>0$. Thus the claims in (i) and (ii) follow by taking $\tau=\max_{i\in I_T}(\tau_i)$ and $\lambda^*=\min_{i\in I_T}\lambda^*_i$.

The matrix $\Sigma^{-1}(t)$ is symmetric and positive definite for $t>0$, thus it has only real positive eigenvalues $\sigma_1(t)\ldots \sigma_d(t)$. So the related characteristic polynomial $p_{\Sigma^{-1}(t)}$ has continuous monoms and always has $d$ real solutions. It means that we can order the eigenvalues $\sigma_1(t)\ldot \sigma_d(t)$ in such way that this functions will be continuous by $t$ and thus $\eta=\min\limits_{i\in \{1\ldot d\}}\min\limits_{t\in[\tau,T]}\sigma_i(t)>0$. This concludes the proof of (iii).
\end{proof}

In the following, for all $u>0, \tau\in(0,T]$, and $\vk x\in\R^d$ we define:
\begin{equation}\label{def:w}
\vk w_{u,\tau}(\vk x):=u\tilde{\vk a}(\tau)+\vk c\tau - \frac{\vk x}{\bar{\vk u}(\tau)}
\end{equation}
where $\bar{\vk u}(\tau)\in\{u,1\}^d$ such that $\bar{\vk u}_{I_\tau}(\tau) := u\cdot{\vk 1}_{|I_\tau|}$, and $\bar{\vk u}_{J_\tau}(\tau) := {\vk 1}_{|J_\tau|}$, that is $\bar{\vk u}(\tau)$ has the components in the set $I_\tau$ equal to $u$ and the other components equal to 1. Further, for all $L>0$, $\tau\in(0,T]$, $\vk x\in\R^d$ and $u > \sqrt{L/\tau}$ consider a Gaussian process $\{\vk Z^{\vk x}_{u, \tau}(t), t\in[0,L]\}$ defined conditionally:
\begin{equation}\label{eq:def_Z_u}
\Big(\vk Z^{\vk x}_{u, \tau}(t)\Big)_{t\in[0,L]} :\overset{d}{=} \Big(\vk Z(\tau-\tfrac{t}{u^2}) \mid \vk Z(\tau)=A^{-1}\vk w_{u,\tau}(\vk x) \Big)_{t\in[0,L]}
\end{equation}
Since the components of $\vk Z(t)$ are mutually independent, then the components of $\vk Z^{\vk x}_{u, \tau}(t)$ are mutually independent as well, i.e. $\cov((\vk Z^{\vk x}_{u, \tau}(s))_{i}, (\vk Z^{\vk x}_{u, \tau}(t))_j) = 0$ for $i\neq j$. By the definition in \eqref{eq:def_Z_u}, for any $i\in\{1,\ldots,d\}$, $t,s\in[0,L]$ we have
\begin{align*}
\E{(\vk Z^{\vk x}_{u, \tau}(t))_{i}} & = \frac{\rho_i(\tau-\tfrac{t}{u^2}, \tau)}{v_i(\tau)} \left(A^{-1}\vk w_{u,\tau}(\vk x)\right)_i\\
\cov\Big((\vk Z^{\vk x}_{u, \tau}(s))_{i}, (\vk Z^{\vk x}_{u, \tau}(t))_i)\Big) & = \rho_i(\tau-\tfrac{s}{u^2}, \tau-\tfrac{t} {u^2}) - \frac{\rho_i(\tau-\tfrac{s}{u^2}, \tau)\rho_i(\tau-\tfrac{t}{u^2}, \tau)}{v_i(\tau)}
\end{align*}
with $\rho_i$ defined in \eqref{eq:def_rho_i}. In the following let $\widehat{\vk Z}_{u, \tau}(t) := \vk Z^{\vk x}_{u, \tau}(t) - \E{\vk Z^{\vk x}_{u, \tau}(t))}$. It is noted that the distribution of $\widehat{\vk Z}_{u, \tau}(t)$ does not depend on $\vk x$.

\begin{lem}\label{lem:piterbarg_condition_Zhat}
There exists a constant $C>0$ such that for all $L>0$, $\tau\in(0,T]$, and $t,s\in[0,L]$ we have
\bqny{
u^2\E{\Big((A\hat{\vk Z}_{u, \tau}(t))_{i}- (A\hat{\vk Z}_{u, \tau}(s))_{i}\Big)^2}\leq Cu^2\max_{j\in\{1,\ldots,d\}} v_j(Lu^{-2})
}
for all $i\in\{1,\ldots,d\}$ and $u$ large enough.
\end{lem}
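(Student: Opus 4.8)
The plan is to reduce the estimate to a one-dimensional computation by exploiting the independence of the components of $\hat{\vk Z}_{u,\tau}$, and then to read off the one-dimensional bound from an exact algebraic identity for the conditional increment variance.

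First I would use that, writing $a_{ik}$ for the entries of $A$, the $i$-th coordinate of $A\hat{\vk Z}_{u,\tau}(t)$ equals $\sum_{k=1}^d a_{ik}(\hat{\vk Z}_{u,\tau}(t))_k$, and that the coordinate processes $(\hat{\vk Z}_{u,\tau}(\cdot))_k$, $k=1,\ldots,d$, are mutually independent centered Gaussian processes (the centering in the definition of $\hat{\vk Z}_{u,\tau}$ does not change the covariance, which is given in the display preceding the statement). Hence
\[
\E{\Big((A\hat{\vk Z}_{u,\tau}(t))_i-(A\hat{\vk Z}_{u,\tau}(s))_i\Big)^2}=\sum_{k=1}^d a_{ik}^2\,\E{\Big((\hat{\vk Z}_{u,\tau}(t))_k-(\hat{\vk Z}_{u,\tau}(s))_k\Big)^2},
\]
so it suffices to prove that $\E{((\hat{\vk Z}_{u,\tau}(t))_k-(\hat{\vk Z}_{u,\tau}(s))_k)^2}\le\max_{j}v_j(Lu^{-2})$ for every $k$; the claim then follows with $C:=\max_{i}\sum_{k=1}^d a_{ik}^2$, a constant depending only on $A$, after multiplying by $u^2$.

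For the one-dimensional bound, abbreviate $\hat Z_k(t):=(\hat{\vk Z}_{u,\tau}(t))_k$, and set $r:=\tau-tu^{-2}$, $\sigma:=\tau-su^{-2}$, both lying in $(0,T]$ once $u>\sqrt{L/\tau}$. Expanding $\E{(\hat Z_k(t)-\hat Z_k(s))^2}=\var\hat Z_k(t)-2\cov(\hat Z_k(t),\hat Z_k(s))+\var\hat Z_k(s)$ and substituting $\cov(\hat Z_k(t),\hat Z_k(s))=\rho_k(r,\sigma)-\rho_k(r,\tau)\rho_k(\sigma,\tau)/v_k(\tau)$ together with $\var\hat Z_k(t)=v_k(r)-\rho_k(r,\tau)^2/v_k(\tau)$ (and the analogue for $s$), the three terms carrying the factor $1/v_k(\tau)$ combine into a perfect square, while $v_k(r)+v_k(\sigma)-2\rho_k(r,\sigma)=v_k(\abs{r-\sigma})=v_k(\abs{t-s}u^{-2})$ by \eqref{eq:def_rho_i}. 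This yields the exact identity
\[
\E{(\hat Z_k(t)-\hat Z_k(s))^2}=v_k(\abs{t-s}u^{-2})-\frac{\big(\rho_k(r,\tau)-\rho_k(\sigma,\tau)\big)^2}{v_k(\tau)}.
\]
Dropping the subtracted term, which is nonnegative since $v_k(\tau)>0$, and using that $v_k$ is nondecreasing with $\abs{t-s}\le L$ (assumption \textbf{B0}), gives $\E{(\hat Z_k(t)-\hat Z_k(s))^2}\le v_k(Lu^{-2})\le\max_j v_j(Lu^{-2})$, as required.

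The proof has no genuine analytic obstacle; the only point that requires care is the algebraic bookkeeping in deriving the displayed identity, in particular checking that the cross-terms assemble into $-(\rho_k(r,\tau)-\rho_k(\sigma,\tau))^2/v_k(\tau)$. I would also note that the resulting bound is uniform over $\tau\in(0,T]$ precisely because we discard the only $\tau$-dependent quantity, and that it holds as soon as $u>\sqrt{L/\tau}$ and $Lu^{-2}\le T$, i.e.\ for all $u$ large enough.
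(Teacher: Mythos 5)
Your proof is correct and follows essentially the same strategy as the paper: compute the conditional increment variance exactly, recognize it as $v_k(|t-s|u^{-2})$ minus a nonnegative square, drop the subtracted term, and then assemble the coordinate bounds into a bound for $(A\hat{\vk Z})_i$. The one small divergence is in that last assembly step: the paper applies the Cauchy--Schwarz inequality to $\bigl(\sum_j a_{ij}((\hat{\vk Z}_u(s))_j-(\hat{\vk Z}_u(t))_j)\bigr)^2$, obtaining $(\sum_j a_{ij}^2)\sum_j\E{(\cdots)^2}$, whereas you observe that the mutual independence of the (centered) coordinate processes $\hat Z_k$ makes the cross-terms vanish exactly, giving the identity $\sum_k a_{ik}^2\,\E{(\hat Z_k(t)-\hat Z_k(s))^2}$. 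Your version is marginally tighter and avoids an unnecessary inequality, but both yield the same conclusion with a constant depending only on $A$, so this is a refinement rather than a different method.
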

\begin{proof}
For brevity, in the following denote $\overline t := \tau-\tfrac{t}{u^2}$, $\overline s := \tau-\tfrac{s}{u^2}$, and $\hat{\vk Z}_u(t) := \hat{\vk Z}_{u,\tau}(t)$. We have
\begin{align*}
\E{\left((\hat{\vk Z}_u(t))_{i}- (\hat{\vk Z}_u(s))_{i}\right)^2} & = \var\{(\hat{\vk Z}_u(t))_{i}\} + \var\{(\hat{\vk Z}_u(s))_{i}\} - 2\cov\{(\hat{\vk Z}_u(t))_{i},(\hat{\vk Z}_u(s))_{i}\} \\
& = v_i(\bar t) - \frac{\rho^2_i(\bar t, \tau)}{v_i(\tau)} + v_i(\bar s) - \frac{\rho^2_i(\bar s, \tau)}{v_i(\tau)} - 2\left(\rho_i(\bar s,\bar t)- \frac{\rho_i(\bar t, \tau)\rho_i(\bar s, \tau)}{v_i(\tau)}\right) \\
& = v_i(|\bar s- \bar t|) - \frac{\big(\rho_i(\bar t, \tau) - \rho_i(\bar s, \tau)\big)^2}{v_i(\tau)}.
\end{align*}
Now, the above is not greater than $v_i(|\bar s - \bar t|) = v_i(|s-t|/u^2) \leq v_i(Lu^{-2})$. Furthermore, we have
\begin{align*}
u^2\E{\left((A\hat{\vk Z}_u(s))_i - (A\hat{\vk Z}_u(t))_i\right)^2} & =  u^2\E{\Big(\sum_{j=1}^d a_{ij}\big((\hat{\vk Z}_u(s))_j - (\hat{\vk Z}_u(t))_j\big)\Big)^2} \\
& \leq u^2\left(\sum_{j=1}^d a_{ij}^2\right) \left(\sum_{j=1}^d\E{(\hat{\vk Z}_u(s))_j - (\hat{\vk Z}_u(t))_j)^2}\right),
\end{align*}
where we used Cauchy–Schwarz inequality. This completes the proof.
\end{proof}
The following corollary to
Lemma \ref{lem:piterbarg_condition_Zhat}
is a straightforward application of Piterbarg inequality \cite[Theorem~8.1]{Pit96}
and Lipshitz continuity of functions $v_i(\cdot)$.
\begin{korr}\label{cor:weak_convergence_to_0}
There exists $C>0$ such that for all $L>0$, $\tau\in(0,T]$, $z>0$ we have
\begin{align*}
\pk{\sup_{t\in[0,L]} u(A\hat{\vk Z}_{u, \tau}(t))_{i} > z} \leq Cz^2e^{-z^2/(2u^2\max_{j\in\{1,\ldots,d\}} v_j(Lu^{-2}))}
\end{align*}
for $i\in\{1,\ldots,d\}$ and all $u$ large enough.
\end{korr}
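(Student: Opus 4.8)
The plan is to apply Piterbarg's inequality \cite[Thm~8.1]{Pit96} to the centered Gaussian process $\xi_{u,\tau}(t):=u\,(A\hat{\vk Z}_{u,\tau}(t))_{i}$, $t\in[0,L]$, in the same fashion as in the proof of Lemma~\ref{lem:bound}. Two ingredients are needed: a bound on the maximal variance of $\xi_{u,\tau}$, which becomes the exponential rate, and a H\"older bound on its increments, which controls the polynomial prefactor. For the variance, I would first note that $\var\big((\hat{\vk Z}_{u,\tau}(0))_{i}\big)=v_i(\tau)-\rho_i^2(\tau,\tau)/v_i(\tau)=0$ and that $\hat{\vk Z}_{u,\tau}$ is centered, so $\hat{\vk Z}_{u,\tau}(0)=\vk 0$ almost surely; hence $\var(\xi_{u,\tau}(t))=u^2\E{\big((A\hat{\vk Z}_{u,\tau}(t))_{i}-(A\hat{\vk Z}_{u,\tau}(0))_{i}\big)^2}\le Cu^2\max_{j}v_j(Lu^{-2})$ for every $t\in[0,L]$ and all $u$ large, directly by Lemma~\ref{lem:piterbarg_condition_Zhat}.

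For the increments I would use the intermediate estimate established inside the proof of Lemma~\ref{lem:piterbarg_condition_Zhat}, namely $\E{((\hat{\vk Z}_{u,\tau}(t))_{j}-(\hat{\vk Z}_{u,\tau}(s))_{j})^2}\le v_j(|t-s|u^{-2})$, together with Cauchy--Schwarz, to get $\E{(\xi_{u,\tau}(t)-\xi_{u,\tau}(s))^2}\le u^2\big(\sum_{j}a_{ij}^2\big)\sum_{j}v_j(|t-s|u^{-2})$. Since every $v_j\in C^1([0,T])$ by {\bf B0}, it is Lipshitz on $[0,T]$, so $v_j(|t-s|u^{-2})\le C_{\mathrm{Lip}}\,|t-s|u^{-2}$, the factor $u^2$ cancels, and we obtain $\E{(\xi_{u,\tau}(t)-\xi_{u,\tau}(s))^2}\le G\,|t-s|$ with $G$ depending only on $A$, $d$ and the Lipshitz constants of the $v_j$, in particular not on $u$, $\tau$, $L$. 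Thus $\xi_{u,\tau}$ is, uniformly in the parameters, a centered Gaussian process with H\"older-$1$ increments and maximal variance at most $Cu^2\max_j v_j(Lu^{-2})$.

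Finally, Piterbarg's inequality with H\"older exponent $\alpha=1$ produces a constant (depending only on $G$ and $\alpha$, hence universal in our setting) such that $\pk{\sup_{t\in[0,L]}\xi_{u,\tau}(t)>z}\le C z^2\,\pk{\mathcal N(0,\sigma_{u,\tau}^2)>z}$ with $\sigma_{u,\tau}^2:=\max_{t\in[0,L]}\var(\xi_{u,\tau}(t))\le Cu^2\max_j v_j(Lu^{-2})$; bounding $\pk{\mathcal N(0,1)>x}\le e^{-x^2/2}$ and using monotonicity of $s\mapsto e^{-z^2/(2s)}$ to insert the variance bound, then collecting all $u,\tau,L$-free factors into one constant, yields the claim. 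I expect the only real point of care to be the uniformity in $u$, $\tau$ and $L$ of the constants produced by Piterbarg's inequality: this is precisely what the Lipshitz continuity of the $v_j$ delivers, since it is what makes the time-rescaling factor $u^2$ in the definition of $\xi_{u,\tau}$ harmless, whereas the vanishing of $\hat{\vk Z}_{u,\tau}$ at $0$, Cauchy--Schwarz, and the standard Gaussian tail estimate are routine.
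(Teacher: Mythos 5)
Your argument is exactly the paper's intended proof: the paper gives no details, stating only that the corollary is a straightforward application of Piterbarg's inequality \cite[Theorem~8.1]{Pit96} together with the Lipshitz continuity of the $v_j$, and your write-up supplies precisely those ingredients (the variance bound via Lemma~\ref{lem:piterbarg_condition_Zhat} with $\hat{\vk Z}_{u,\tau}(0)=\vk 0$, the H\"older-$1$ increment bound uniform in $u,\tau,L$ from Lipshitz continuity, then Piterbarg). The only caveat, shared with the paper's own statement, is that Piterbarg's bound carries a factor proportional to the length $L$ of the interval, so strictly the constant is not $L$-free unless one absorbs it for $u$ large (as is done where the corollary is applied); apart from this your proof is correct and essentially identical in approach.
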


In the following, for any $i\in\{1,\ldots,d\}$, $t,s\in[0,L]$ we define
\begin{equation}\label{def:h,theta}
\begin{split}
h_{u,\tau}(L, \vk x) &:= \pk{\exists_{s\in[\tau-Lu^{-2},\tau]}:\vk X(t)-\vk c t>u\vk a \mid \vk X(\tau)=\vk w_{u,\tau}(\vk x)},\\
\theta_{u,\tau}(\vk x) &:= \varphi_\tau(\vk w_{u,\tau}(\vk x))/\varphi_\tau(u\tilde{\vk a}(\tau) +\vk c\tau)
\end{split}
\end{equation}

\begin{lem}\label{lem:dominant}
There exists $\tau_0\in(0,T)$ and function $H:\R_+\times \R^d \to \R_+$ satisfying $\int_{\R^d} H(L,\vk x)\td\vk x =: C^*(L) <  \infty$ for all $L>0$, and $u_0 : \R_+\to\R_+$ such that for all $L>0$, $\tau\in[\tau_0,T]$, and $\vk x\in\R^d$ we have
\begin{align*}
h_{u,\tau}(L, \vk x)\theta_{u,\tau}(\vk x) \leq H(L,\vk x)
\end{align*}
for all $u>u_0(L)$. Moreover, there exists $C^*>0$ such that $\limsup_{L\to\infty} C^*(L) < C^*$.
\end{lem}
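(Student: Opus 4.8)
The plan is to pass to the conditional representation \eqref{eq:def_Z_u}, write $h_{u,\tau}(L,\vk x)$ as an exceedance probability for the small‑variance, $\vk x$‑free process $\widehat{\vk Z}_{u,\tau}$ with an explicit $\vk x$‑dependent threshold, and then dominate $h_{u,\tau}$ and $\theta_{u,\tau}$ separately, uniformly over $\tau\in[\tau_0,T]$ and over $u>u_0(L)$, with $u_0(L)$ chosen large enough to absorb every trace of $L$. I would take $\tau_0:=\bar\tau$ from Lemma~\ref{lem:tau}, so that throughout $[\tau_0,T]$ one has $I_T\subseteq I_\tau$, $\lambda_i(\tau)>\lambda^*$ for $i\in I_\tau$, $\Sigma^{-1}(\tau)\succeq\eta\mathcal I_d$, $I_\tau$ takes only finitely many values, and $\tilde{\vk a}(\tau),\vk\lambda(\tau)$ as well as $\mu_i(\tau):=\tfrac12(AQ(\tau)A^{-1}\tilde{\vk a}(\tau))_i$, $Q(\tau)={\rm diag}(\dot v_j(\tau)/v_j(\tau))$, are continuous, hence uniformly bounded. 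Using the conditional‑mean formula displayed after \eqref{eq:def_Z_u}, I get $h_{u,\tau}(L,\vk x)=\pk{\exists_{s\in[0,L]}:\,A\widehat{\vk Z}_{u,\tau}(s)>\vk z(s;\vk x)}$, whose $i$‑th threshold for $i\in I_\tau$, after multiplying by $u$, is $x_i+s\mu_i(\tau)+\varepsilon_i(s,\vk x,u,\tau)$ with $|\varepsilon_i|\le\varepsilon_0(1+|\vk x|)$ once $u>u_0(L)$ (here \textbf{B0}, \textbf{BI} produce the $\dot v_j$–drift exactly as in the computation of $\dot D(T)$ in Lemma~\ref{prop:D}, while the Berman condition \textbf{BII} enters through $u^2v_j(Lu^{-2})\to0$), and for $i\in J_\tau$ is $u(a_i-\tilde a_i(\tau))+x_i+(\text{small})$, tending to $-\infty$ for $i\in J_\tau\setminus U_\tau$ and equal to $x_i+(\text{small})$ for $i\in U_\tau$. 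The key algebraic input is $\langle\vk\lambda_{I_\tau}(\tau),\vk\mu_{I_\tau}(\tau)\rangle=\tfrac12\tilde{\vk a}(\tau)^\top\Sigma^{-1}(\tau)AQ(\tau)A^{-1}\tilde{\vk a}(\tau)=\tfrac12\sum_j\dot v_j(\tau)v_j(\tau)^{-2}\big((A^{-1}\tilde{\vk a}(\tau))_j\big)^2=-\tfrac12\dot D(\tau)\ge\nu^*>0$ uniformly in $\tau$, by \textbf{BI}.

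For $\theta_{u,\tau}$ I would use the exact Gaussian‑ratio identity (normalising constants cancel) $\log\theta_{u,\tau}(\vk x)=\langle\vk\lambda_{I_\tau}(\tau),\vk x_{I_\tau}\rangle+\tau\vk c^\top\Sigma^{-1}(\tau)\vk y-\tfrac12\vk y^\top\Sigma^{-1}(\tau)\vk y$, $\vk y=\vk x/\bar{\vk u}(\tau)$, where $\vk\lambda(\tau)_{J_\tau}=\vk 0$ collapses the first term to $\langle\vk\lambda_{I_\tau}(\tau),\vk x_{I_\tau}\rangle$; since $\vk y_{J_\tau}=\vk x_{J_\tau}$ and $\Sigma^{-1}(\tau)\succeq\eta\mathcal I_d$ (Lemma~\ref{lem:tau}(iii)), the quadratic term is $\le-\tfrac\eta2|\vk x_{J_\tau}|^2$, and the $\vk c$‑linear term is $\le\tfrac Cu|\vk x_{I_\tau}|+C|\vk x_{J_\tau}|$ because the $I_\tau$‑entries of $\vk y$ carry a factor $1/u$. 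Absorbing $C/u$ into $\varepsilon_0$ by enlarging $u_0(L)$, this yields, uniformly in $\tau\in[\tau_0,T]$, $u>u_0(L)$,
\[\theta_{u,\tau}(\vk x)\le C\exp\!\Big(\langle\vk\lambda_{I_\tau}(\tau),\vk x_{I_\tau}\rangle+\varepsilon_0\|\vk x_{I_\tau}\|_1-\tfrac\eta4\|\vk x_{J_\tau}\|_2^2\Big),\]
so $\theta_{u,\tau}$ already decays super‑Gaussianly in $\vk x_{J_\tau}$ and exponentially in $\vk x_{I_\tau}^-$ (taking $\varepsilon_0<\lambda^*$), and only its growth in $\vk x_{I_\tau}^+$ must be killed by $h_{u,\tau}$.

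For $h_{u,\tau}$ I would combine several $L$‑free estimates (valid for $u>u_0(L)$). The primary one uses the $\vk\lambda(\tau)$‑weighted combination $\Xi(s):=\langle\vk\lambda_{I_\tau}(\tau),uA\widehat{\vk Z}_{u,\tau}(s)\rangle$: since $\sum_{i\in I_\tau}\lambda_i(\tau)z_i(s)=\langle\vk\lambda_{I_\tau}(\tau),\vk x_{I_\tau}\rangle+s\nu(\tau)+O(\varepsilon_0(1+|\vk x|))$ with $\nu(\tau)\ge\nu^*>0$, the infimum over $s\in[0,L]$ is essentially $\zeta:=\langle\vk\lambda_{I_\tau}(\tau),\vk x_{I_\tau}\rangle$ and the argument behind Corollary~\ref{cor:weak_convergence_to_0} (which applies verbatim to a fixed bounded linear combination of the components of $A\widehat{\vk Z}_{u,\tau}$, with $u^2\max_jv_j(Lu^{-2})\le1$) gives $h_{u,\tau}(L,\vk x)\lesssim\zeta_+^2\exp\!\big(-\tfrac14(\zeta-\varepsilon_0(1+|\vk x|))_+^2\big)$ — the favourable sign of the drift $s\nu(\tau)$ is precisely what prevents any $L$‑dependent correction from surviving in the threshold. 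I would then partition $\R^d$: on $\{\zeta\ge2\varepsilon_0(1+|\vk x|)\}$ this bound beats the $\theta$‑growth and $\zeta$ itself controls $|\vk x_{I_\tau}|$; on $\{\zeta<2\varepsilon_0(1+|\vk x|)\}$ with $\|\vk x_{I_\tau}^-\|_1$ large relative to $\|\vk x_{I_\tau}^+\|_1$, the $\vk x_{I_\tau}^-$‑decay of $\theta$ alone suffices with $h\le1$; and on the remaining range, where $\vk x_{I_\tau}^+$ is large but $\zeta$ is not, I would use either single coordinates $i\in I_\tau$ whose threshold infimum $x_i-L\max(0,-\mu_i(\tau))-O(\varepsilon_0(1+|\vk x|))$ is still $\gtrsim x_i$, or — since a common exceedance time $s$ must simultaneously satisfy constraints that the drift signs render incompatible once $\vk x_{I_\tau}^+$ is large (the $\mu_i>0$ coordinates force $s$ small, the large $\mu_i<0$ coordinates force $s$ large) — a further linear combination with drift of the opposite sign. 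In every case one obtains, for each of the finitely many $I_\tau$, an integrable upper bound for $h_{u,\tau}\theta_{u,\tau}$ whose shape does not depend on $L$; taking $H(L,\vk x)$ to be the finite maximum of these bounds over the possible $I_\tau$ gives $H(L,\cdot)\in L^1(\R^d)$ with $C^*(L)=\int_{\R^d}H(L,\vk x)\,d\vk x$ bounded uniformly in $L$, so $\limsup_{L\to\infty}C^*(L)<C^*:=1+\sup_{L>0}C^*(L)$.

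The main obstacle is exactly this last partition together with the uniform‑in‑$L$ integrability: in the intermediate range, where $\langle\vk\lambda_{I_\tau}(\tau),\vk x_{I_\tau}\rangle$ stays bounded but $\vk x_{I_\tau}^+$ may grow like $L$, neither $\theta_{u,\tau}$ alone nor any single Piterbarg estimate decays, and one must exploit both the uniform positivity of the $\vk\lambda(\tau)$‑weighted drift $\nu(\tau)\ge\nu^*>0$ (from \textbf{BI}) and the fact that the joint exceedance requires control at one and the same time $s$, which the drift signs make impossible for large $\vk x_{I_\tau}^+$. Keeping every constant ($\lambda^*,\eta,\nu^*$, the bounds on $\vk\mu(\tau),\tilde{\vk a}(\tau)$, the Berman rate) uniform over $\tau$ by Lemma~\ref{lem:tau} and compactness, and over the finitely many index sets $I_\tau$, is what lets $u_0(L)$ soak up all $L$‑dependence.
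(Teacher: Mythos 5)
Your route is genuinely different from the paper's, although it pivots on the same structural fact. The paper bounds $h_{u,\tau}$ by a layer-cake sum over the magnitude of the conditional fluctuation $W_{u,\tau}=\max_i\sup_{t\in[0,L]}u(A\hat{\vk Z}_{u,\tau}(t))_i$ (via Corollary \ref{cor:weak_convergence_to_0}, $\pk{W_{u,\tau}>\ep k}\le e^{-(\ep k)^2}$), which reduces the $\vk x$-dependence to deterministic indicators of the form $\mathbb{I}\{\exists_{t\in[0,L]}\forall_{i\in I}:(-\tfrac12(AQA^{-1}\tilde{\vk a})_i+\ep)t>x_i-\ep(k+1)/(1-\ep)\}$; these are integrated against the dominating $\bar\theta$ exactly via Lemma \ref{lem:int_repr}, and the uniform-in-$L$ bound on $C^*(L)$ falls out of the closed formula because the total drift $\sum_i f_i=\tfrac12\dot D(T)<0$. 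You keep probabilistic estimates throughout: the same conditional representation, drift computation and uniform ingredients from Lemma \ref{lem:tau}, but then a partition of $\R^d$ combined with Piterbarg-type bounds for the $\vk\lambda$-weighted combination, whose uniformly positive drift $-\tfrac12\dot D(\tau)\ge\nu^*>0$ plays exactly the role that $\sum_i f_i<0$ plays in the paper. So both proofs hinge on $\sum_{i\in I}\lambda_i(AQA^{-1}\tilde{\vk a})_i=-\dot D(T)>0$; yours trades the exact evaluation of Lemma \ref{lem:int_repr} for a geometric case analysis, and your treatment of $\theta_{u,\tau}$ and of the error terms (absorbing all $L$-dependence into $u_0(L)$ via {\bf BII}) matches the paper's.

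The soft spot is precisely the step you flag as the main obstacle: the intermediate region where $\zeta=\sprod{\vk\lambda_I,\vk x_I}$ is moderate but $\|\vk x_I^+\|_1$ is large (possibly of order $L$). Your justification there, that ``the $\mu_i>0$ coordinates force $s$ small while the large $\mu_i<0$ coordinates force $s$ large'', is not correct as stated: a coordinate with $\mu_i>0$ but $x_i$ strongly negative (which your region of dominant $\|\vk x_I^-\|_1$, defined through a fixed ratio to $\|\vk x_I^+\|_1$, does not exclude) imposes no constraint for times up to order $|x_i|/\mu_i$, so neither single coordinates nor one fixed linear combination rules out all exceedance times at once. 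The argument can be completed, but it needs more than is written: e.g.\ a further split according to whether $\zeta\le-\kappa\|\vk x_I^+\|_1$ (where $\theta$ alone decays exponentially and $h\le1$ suffices), and otherwise a time split at $s_0\sim\delta\|\vk x_I^+\|_1$, using a single large-positive coordinate on $[0,s_0]$ and the $\vk\lambda$-weighted combination, whose threshold $\zeta+s\nu$ is then of order $\|\vk x_I^+\|_1$, on $[s_0,L]$, with $\delta$, $\kappa$, $\ep_0$ chosen in the correct order and the $\vk x_J$-part of the error terms absorbed by the Gaussian decay of $\theta$ in $\vk x_J$. The paper bypasses all of this: the layer-cake bound plus Lemma \ref{lem:int_repr} handles the whole of $\R^{|I|}$ in one stroke, which is exactly why that lemma is isolated and proved separately. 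In short: a viable alternative strategy, correctly anchored on $-\dot D(T)>0$ and the uniformity supplied by Lemma \ref{lem:tau}, but the decisive uniform-in-$L$ integrability estimate on the intermediate region is asserted rather than established in your write-up.
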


\begin{proof}
For $u>0, \tau\in(0,T]$, and $\vk x\in\R^d$ let $\theta_{u,\tau}(\vk x) := \varphi_\tau(\vk w_{u,\tau}(\vk x))/\varphi_\tau(u\tilde{\vk a}(\tau) +\vk c\tau)$. Then
\begin{align*}
\theta_{u,\tau}(\vk x) & = \exp\left\{u\tilde{\vk a}^\top(\tau)\Sigma^{-1}(\tau)(\vk x/\bar{\vk u}) + \tau\vk c^\top\Sigma^{-1}(\tau)(\vk x/\bar{\vk u}) - \frac{1}{2}(\vk x/\bar{\vk u})^\top \Sigma^{-1}(\tau)(\vk x/\bar{\vk u})\right\} \\
& = e^{<\vk\lambda_I(\tau),\vk x_I>}e^{<\tilde{\vk c}_I(\tau)/u,\vk x_I>}e^{<\tilde{\vk c}_J(\tau),\vk x_J>}e^{-\frac{1}{2}(\vk x/\bar{\vk u})^\top\Sigma^{-1}(\tau)(\vk x/\bar{\vk u})},
\end{align*}
where $\tilde{\vk c}(\tau) := \vk c^\top \Sigma^{-1}(\tau)$. From Lemma~\ref{lem:tau}(iii) we know that there exists $\eta>0$ such that $\Sigma^{-1}(\tau)-\eta \mathcal I_d$ is positive definite for all $\tau<T$, thus
\begin{align*}
e^{-\frac{1}{2}(\vk x/\bar{\vk u})^\top\Sigma^{-1}(\tau)(\vk x/\bar{\vk u})} \leq e^{-\frac{1}{2}(\vk x/\bar{\vk u})^\top\left(\Sigma^{-1}(\tau)-\eta I_d\right)(\vk x/\bar{\vk u})}e^{-\frac{\eta}{2}||\vk x/\bar{\vk u}||^2} \leq e^{-\frac{\eta}{2}\|\vk x_J\|^2}.
\end{align*}
Furthermore, due to the continuity of $v_i(\cdot)$ and $\lambda_i(\cdot)$ (see Lemma \ref{lem:lipshtz_continuity}), for all $\ep>0$ there exists $\tau_0<T$ large enough such that
\begin{align*}
\lambda_i(T)(1-\ep) < \lambda_i(\tau) < \lambda_i(T)(1+\ep), \quad\text{and}\quad c_i^*(T)(1-\ep) \leq |c_i^*(\tau)| \leq c_i^*(T)(1+\ep)
\end{align*}
for all $i\in\{1,\ldots,d\}$ and $\tau\in[\tau_0,T]$. Moreoever, for all $\ep>0$ small enough and $\tau_0>\bar\tau$, where $\bar\tau$ is defined in \nelem{lem:tau} we also have $\lambda_i^{\ep, \vk x}(t) := \lambda_i(t)-{\rm sgn}(x_i)\ep>0$ for all $i\in I$, thus for every $\ep>0$ small enough there exists $\tau_0$ such that
\bqny{
\theta_u(\vk x) \leq e^{< \vk \lambda_I(T)(1+\ep\cdot\sgn{\vk x_I}),\vk x_I>}e^{<\vk c_J^*(T)(1 + \ep\cdot\sgn{\vk x_J}),\vk x_J>}e^{-\frac{\eta}{2}||\vk x_J||^2}=:\bar\theta(\vk x),
}
Now, let $\vk Z^{\vk x}_{u,\tau}(t)$, and $\hat{\vk Z}_{u,\tau}(t)$ be defined as in \eqref{eq:def_Z_u}. Since $\vk X = A \vk Z$, then
\begin{eqnarray}
\label{eq:h_equalities} h_{u,\tau}(L, \vk x)&=&\pk{\exists_{t\in[\tau-Lu^{-2},\tau]}: A\vk Z(t)-\vk c t>u\vk a \mid A\vk Z(\tau)=\vk w_{u,\tau}(\vk x)},\\
\nonumber &=&\pk{\exists_{t\in[0,L]}: A\vk Z(\tau-\tfrac{t}{u^2})-\vk c(\tau-\tfrac{t}{u^2})>u\vk a \mid A\vk Z(\tau)=\vk w_{u,\tau}(\vk x)},\\
\nonumber &=&\pk{\exists_{t\in[0,L]}: A\vk Z^{\vk x}_{u, \tau}(t) - \vk c \tau + \vk c t/u^2 >u\vk a} \\
\nonumber &=&\pk{\exists_{t\in[0,L]} \forall_{i\in \{1,\ldots,d\}}: u(A\hat{\vk Z}_{u,\tau}(t))_i + (\vk\mu_{u,\tau}(t,\vk x))_i > 0},
\end{eqnarray}
where, with defining $R_{u,\tau}(t) := {\rm diag}(\rho_i(\tau-\tfrac{t}{u^2}, \tau) / v_i(\tau))$ for breviety, we have
\begin{align*}
\vk \mu_{u,\tau}(t,\vk x) & := uA\,R_{u,\tau}(t)A^{-1}\vk w_{u,\tau}(\vk x) - u\vk c\tau + \vk c t/u - u^2 \vk a \\
& = uA\,R_{u,\tau}(t)A^{-1} (u \tilde{\vk a} + \vk c\tau - \vk x/\bar{\vk u}) - u\vk c\tau + \vk c t/u - u^2 \vk a \\
& = u^2 A\left(R_{u,\tau}(t) - \mathcal I_d\right)A^{-1} \cdot (\tilde{\vk a} +\vk c\tau/u -\vk x/(u\bar{\vk u})) \\
& \quad\quad + \vk c t/u + u^2(\tilde{\vk a}-\vk a) - u\vk x/\bar{\vk u}.
\end{align*}
Now, notice that
\begin{align}
\nonumber u^2 A\left(R_{u,\tau}(t) - \mathcal I_d\right)A^{-1} & = u^2 A\cdot {\rm diag}\left(\frac{\rho_i(\tau-\tfrac{t}{u^2}, \tau)-v_i(\tau)}{v_i(\tau)}\right)\cdot A^{-1} \\
\nonumber & = tA\cdot {\rm diag}\left(\frac{1}{2v_i(\tau)}\left[\frac{v_i(\tau-\tfrac{t}{u^2})-v_i(\tau)}{t/u^2} + \frac{ v_i(\tfrac{t}{u^2})}{t/u^2}\right]\right)\cdot A^{-1} \\
\label{RQ_convergence}& \to -\frac{t}{2}AQ(\tau)A^{-1}, \quad u\to \infty
\end{align}
for any fixed $\tau,T$, where $Q(\tau) := {\rm diag}(\dot v_i(\tau)/ v_i(\tau))$. Moreover, applying the mean value theorem yields
\begin{align*}
\inf_{s\in[\tau_0-Lu^{-2},T]} |\dot v_i(s)| \leq \left|\frac{v_i(\tau-\tfrac{t}{u^2})-v_i(\tau)}{t/u^2}\right| \leq \sup_{s\in[\tau_0-Lu^{-2},T]} |\dot v_i(s)|,
\end{align*}
so using the assumption {\bf B0}, for every $\ep>0$ there exists $\tau_0<T$ such that
\begin{align*}
-(1+\ep)\dot v_i(T) \leq \frac{v_i(\tau-\tfrac{t}{u^2})-v_i(\tau)}{t/u^2} \leq -(1-\ep)\dot v_i(T)
\end{align*}
for all $\tau\in[\tau_0,T]$, $t\in[0,L]$ and $u$ large enough. The bound above implies that for any $\ep>0$, we can find $\tau_0<T$ such that for all $i\in I$ we have
\begin{align*}
(\vk \mu_{u,\tau}(t,\vk x))_{i} \leq \left(-\tfrac{1}{2} (AQA^{-1} \tilde{\vk a})_i + \ep\right)t - x_i(1-\ep\cdot\sgn{x_i}),
\end{align*}
for all $t\in[\tau_0,T]$ and $u$ large enough, where $Q:= Q(T) = {\rm diag}(\dot v_i(T)/ v_i(T))$.  In the following define $W_{u,\tau} := \max_{i\in\{1,\ldots,d\}}\sup_{t\in[0,L]} u(A\hat{\vk Z}_{u,\tau}(t))_i$ and see that
\begin{align*}
h_{u,\tau}(L, \vk x) & \leq \pk{\exists_{t\in[0,L]} \forall_{i\in I}: W_{u,\tau} + \left(-\tfrac{1}{2} (AQA^{-1}\tilde{\vk a})_i  + \ep\right)t - x_i(1 - \ep\cdot\sgn{x_i}) > 0} \\
& \leq \pk{\exists_{t\in[0,L]} \forall_{i\in I}: \frac{-\tfrac{1}{2} (AQA^{-1} \tilde{\vk a})_i + \ep}{1-\ep\cdot\sgn{x_i}} \cdot t > x_i - \frac{W_{u,\tau}}{1-\ep}}\\
& \leq \sum_{k=0}^\infty \pk{\exists_{t\in[0,L]} \forall_{i\in I}: \frac{-\tfrac{1}{2} (AQA^{-1} \tilde{\vk a})_i + \ep}{1-\ep\cdot\sgn{x_i}} \cdot t > x_i - \frac{W_{u,\tau}}{1-\ep}; W_{u,\tau}\in(\ep k,\ep(k+1)]}\\
& \leq \sum_{k=0}^\infty \mathbb{I}\left\{\exists_{t\in[0,L]} \forall_{i\in I}: \frac{-\tfrac{1}{2} (AQA^{-1} \tilde{\vk a})_i + \ep}{1-\ep\cdot\sgn{x_i}} \cdot t > x_i - \frac{\ep(k+1)}{1-\ep}\right\}\pk{W_{u,\tau}>\ep k}
\end{align*}
Furthermore, due to Corollary \ref{cor:weak_convergence_to_0} and assumption {\bf BII}, we have
\begin{align*}
\pk{W_{u,\tau}>\ep k} \leq e^{-(\ep k)^2}
\end{align*}
for all $\tau\in[\tau_0,T]$ and $u$ large enough. Thus,
\begin{align*}
h_{u,\tau}(L,\vk x) \leq \sum_{k=0}^\infty \mathbb{I}\left\{\exists_{t\in[0,L]} \forall_{i\in I}: \frac{-\tfrac{1}{2} (AQA^{-1} \tilde{\vk a})_i + \ep}{1-\ep\cdot\sgn{x_i}} \cdot t > x_i - \frac{\ep(k+1)}{1-\ep}\right\}e^{-(\ep k)^2} := \bar h(L,\vk x).
\end{align*}
for all $u$ large enough. Furthermore, define
\begin{align*}
E_k(L) := \int_{\R^{|I|}} \mathbb{I}\left\{\exists_{t\in[0,L]} \forall_{i\in I}: \frac{-\tfrac{1}{2} (AQA^{-1} \tilde{\vk a})_i + \ep}{1-\ep\cdot\sgn{x_i}} \cdot t > x_i - \frac{\ep(k+1)}{1-\ep}\right\}e^{< \vk \lambda_I(T)(1+\ep\cdot\sgn{\vk x_I}),\vk x_I>}\td\vk x_I.
\end{align*}
Then
\begin{align*}
\int_{\R^d} \bar h(L,\vk x)\bar\theta(\vk x) & = \sum_{k=0}^\infty E_k(L)e^{-(\ep k)^2} \cdot \int_{\R^{|J|}}e^{<\vk c_J^*(T)(1 + \ep\cdot\sgn{\vk x_J}),\vk x_J>}e^{-\frac{\eta}{2}||\vk x_J||^2}\td\vk x_J,
\end{align*}
Now, the integral over $\R^{|J|}$ above is bounded for all $\ep$ small enough because it does not depend on $\tau$ and $u$. We now focus on the sum $\sum_{k=0}^\infty E_k(L)e^{-(\varepsilon k)^2}$.  Let $\vk\delta = (\delta_1,\ldots,\delta_d)\in\{-1,1\}^{|I|}$. For each $k\in\N$ we have
\begin{align*}
E_k(L) \leq \sum_{\vk \delta\in\{-1,1\}^{|I|}} \int_{\R^{|I|}} \mathbb{I}\left\{\exists_{t\in[0,L]} \forall_{i\in I}: \frac{-\tfrac{1}{2} (AQA^{-1} \tilde{\vk a})_i + \ep}{1-\ep\delta_i} \cdot t > x_i - \frac{\ep(k+1)}{1-\ep}\right\}e^{< \vk \lambda_I(T)(1+\ep\delta_i),\vk x_I>}\td\vk x_I.
\end{align*}
After applying substitution $x_i := \lambda_i(1+\ep\delta_i)\left[x_i - \frac{\ep(k+1)}{1-\ep}\right]$, each term of the sum above is bounded from above by $\mathcal C(L ; \ep, \vk\delta)\cdot e^{g_i(\ep)}$, where
\begin{align*}
\mathcal C(L ; \ep, \vk\delta) := \frac{1}{\prod_{i\in I} \lambda_i(1+\ep\delta_i)}\int_{\R^{|I|}} \mathbb{I}\left\{\exists_{t\in[0,L]} \forall_{i\in I}: f_i(\ep,\delta_i) t > x_i\right\}e^{\sum_{i\in I} x_i}\td\vk x_I,
\end{align*}
with $f_i(\ep,\delta_i)$ and $g_i(\ep)$ defined below
\begin{align*}
f_i(\ep,\delta_i) := \frac{\left(-\tfrac{1}{2} (AQA^{-1} \tilde{\vk a})_i + \ep\right)\lambda_i(1+\ep\delta_i)}{1-\ep\delta_i}, \quad g_i(\ep) := \frac{\lambda_i(1+\ep)\ep(k+1)}{1-\ep}.
\end{align*}
It is straightforward to see that $f_i(\ep,\delta_i) \to f_i := (-A\frac{\dot{\vk v}(T)}{2\vk v(T)}A^{-1} \vk a)_i\lambda_i$, as $\ep\to0$ for any $\vk \delta$ and similarly $g_i(\ep)\to0$. Therefore,
\begin{align*}
\mathcal C(L ; \ep, \vk\delta) \to \mathcal C(L), \quad \ep\to0,
\end{align*}
where
\begin{align}
\label{def:CL}\mathcal C(L) & := \prod_{i\in I}\lambda_i\int_{\R^{|I|}}\mathbb{I}\left\{\exists_{t\in[0,L]}:-\tfrac{1}{2}(AQA^{-1} \tilde{\vk a})_I-\vk x_I>\vk 0_{I}\right\}e^{\sum\limits_{i\in I}\lambda_i x_i}\td\vk x_I\\
\nonumber & = \int_{\R^{|I|}}\mathbb{I}\left\{\exists_{t\in[0,L]}\forall_{i\in I}:-\tfrac{1}{2}\lambda_i \cdot (AQA^{-1} \tilde{\vk a})_i  -x_i>0\right\}e^{\sum\limits_{i\in I} x_i}\td\vk x.
\end{align}
Now, since $\Sigma^{-1} = A^{-\top}~{\rm diag}(v_i(T))A^{-1}$ and $\vk \lambda = \Sigma^{-1}\tilde a$, then
\begin{align*}
\sum_{i\in I}\lambda_i \cdot (AQA^{-1} \tilde{\vk a})_i & = \sprod{AQA^{-1} \tilde{\vk a},\vk\lambda} = \sprod{AQA^{-1} \tilde{\vk a},\vk\lambda} \\
& = \vk\lambda^\top~{\rm diag}\left(\frac{\dot{v_i}(T)}{v_i(T)}\right)A^{\top}A^{-\top}{\rm diag}\left(\frac{1}{v_i(T)}\right)A^{-1}\tilde{\vk a} \\
& = -\dot D(T),
\end{align*}
where in the last line we used Lemma~\ref{prop:D}. Applying Lemma \ref{lem:int_repr} with $\vk f = (f_i)$, $f_i = -\tfrac{1}{2}\sum_{i\in I}\lambda_i \cdot (AQA^{-1} \tilde{\vk a})_i$ and the fact that, $\sum_i f_i = \tfrac{1}{2}\dot D(T)<0$, cf. Lemma \ref{prop:D}, we conclude that $\mathcal C(L) \to \mathcal C$, as $L\to\infty$, with $\mathcal C$ defined in \eqref{def:C}.

Now, since $\mathcal C(L)\to \mathcal C$, then there must exist some $c_1>0$ such that for all $\ep$ small enough and all $\vk \delta\in\{-1,1\}^{|I|}$, we have $\mathcal C(L;\ep,\vk \delta) \leq (1+c_1) 2^{d+1}\mathcal C$. Finally, notice that for each $\ep>0$,
\begin{align*}
\sum_{k=0}^\infty \exp\left\{\frac{\lambda_i(1+\ep)\ep(k+1)}{1-\ep}\right\}e^{-(\ep k)^2} < \infty.
\end{align*}
These observations combined give us that there exists a constant $c_2>0$ such that
\begin{align*}
\int_{\R^d} \bar h(L,\vk x)\bar\theta(\vk x) < c_2 \cdot \mathcal C,
\end{align*}
for all $u$ large enough. This completes the proof.
\end{proof}

In the following, for any $\tau\in(0,T]$, $L>0$ and $u>\sqrt{L/\tau}$ let
\begin{equation}\label{def:Mtau}
M_\tau(u,L) := \pk{\exists_{t\in[\tau-Lu^{-2},\tau]}:\vk X(t)-\vk c t >u\vk a}.
\end{equation}

\begin{proof}[Proof of Lemma \ref{lem:A}]
For any $T>0$, with $M_T(u,L)$ defined in \eqref{def:Mtau}, we have
\bqny{
M_T(u,L)=
\int_{\R^d}\pk{\exists_{t\in[T-Lu^{-2},T]}:\vk X(t)-\vk c t>u\vk a \mid \vk X(T)=\vk x}\varphi_T(\vk x)\td\vk x,
}
where $\varphi_T$ be the pdf of $\vk X(T)$. After applying substitution $\vk w_{u,T}(\vk x)$; see \eqref{def:w} we obtain
\begin{align}
\nonumber M_T(u,L) & = u^{-|I_T|}\int_{\R^d}\pk{\exists_{t\in[T-Lu^{-2},T]}:\vk X(t)-\vk c t>u\vk a \mid \vk X(T)=\vk w_{u,T}(\vk x)}\varphi(\vk w_{u,T}(\vk x))\td\vk x \\
\label{eq:M_T}& = u^{-|I_T|}\int_{\R^d}h_{u,T}(L,\vk x)\varphi(\vk w_{u,T}(\vk x))\td\vk x.
\end{align}
Now, let $\theta_{u,T}(\vk x) := \varphi_T(\vk w_{u,T}(\vk x))/\varphi_T(u\tilde{\vk a}(T) +\vk cT)$. Then
\begin{align*}
\theta_{u,T}(\vk x) & = \exp\left\{u\tilde{\vk a}^\top(T)\Sigma^{-1}(T)(\vk x/\bar{\vk u}) + T\vk c^\top\Sigma^{-1}(T)(\vk x/\bar{\vk u}) - \frac{1}{2}(\vk x/\bar{\vk u})^\top \Sigma^{-1}(T)(\vk x/\bar{\vk u})\right\},
\end{align*}
with the three terms under the exponent exhibit the following behavior:
\bqny{
& &e^{-\frac{1}{2}(\vk x/\bar{\vk u})^\top \Sigma^{-1}(T)(\vk x/\bar{\vk u})}\to e^{-\frac{1}{2}\vk x_J^\top(\Sigma^{-1}(T))_{JJ}\vk x_J}, \quad u\to\infty\\
& &e^{T\vk c^\top\Sigma^{-1}(T)(\vk x/\bar{\vk u})}\to e^{T \sprod{(\vk c^\top \Sigma^{-1}(T))_J, \vk x_J}}, \quad u\to\infty\\
& &e^{\tilde{\vk a}^\top(T)\Sigma^{-1}(T)(u\vk x/\bar{\vk u})}=e^{\vk \lambda^\top(T)(u\vk x/\bar u)}= e^{\sprod{\vk \lambda_I(T), \vk x_I}}.
}
Letting $\tilde{\vk c} := Tc^\top\Sigma^{-1}(T)$ we obtain
\bqny{
\theta(\vk x):=\limit{u} \theta_{u}(\vk x) = e^{\sprod{\vk \lambda_I(T), \vk x_I}} \cdot e^{-\frac{1}{2}\vk x_J^\top(\Sigma^{-1}(T))_{JJ}\vk x_J}e^{\sprod{\tilde{\vk c}_J, \vk x_J}}.
}
Let $\hat{\vk Z}_{u, \tau}(t)$ be defined as in \eqref{eq:def_Z_u}. Repeating steps from the proof of Lemma \ref{lem:dominant}, cf. Eq.~\eqref{eq:h_equalities} and below, we find that
\begin{align*}
h_{u,T}(L, \vk x) = \pk{\exists_{t\in[0,L]}: uA\hat{\vk Z}_{u,\tau}(t) + \vk\mu_{u,T}(t,\vk x) > 0},
\end{align*}
where,
\begin{align*}
\vk\mu_{u,T}(t,\vk x) & = u^2 A\left(R_{u,\tau}(t) - \mathcal I_d\right)A^{-1} \cdot (\tilde{\vk a} +\vk c\tau/u - x/(u\bar{\vk u})) + \vk c t/u + u^2(\tilde{\vk a}-\vk a) - u\vk x/\bar{\vk u}.
\end{align*}
with $R_{u,\tau}(t) := {\rm diag}(\rho_i(\tau-\tfrac{t}{u^2}, \tau) / v_i(\tau))$. Let $U_T := \{i\in J_T: \tilde a_i(T) = a_i\}$ be the subset of $J_T$. Repeating steps from the proof of Lemma \ref{lem:dominant}, cf. Eq.~\eqref{RQ_convergence} and below, we obtain
\begin{align*}
(\vk\mu_{u,T}(t,\vk x))_i \to \mu(T,\vk x) := \begin{cases}
-\tfrac{1}{2}(AQA^{-1} \tilde{\vk a})_i - x_i, & i\in I\\
{\rm sgn}(-x_i) \cdot \infty, & i\in U \\
\infty, & i\in J\setminus U
\end{cases}
\end{align*}
as $u\to\infty$, where $Q:={\rm diag}(\dot v_i(T)/ v_i(T))$ and $U$ was defined in \eqref{keyWW}. Now, see that
\begin{eqnarray*}
\lefteqn{\pk{\exists_{t\in[0,L]} \forall_{i\in \{1,\ldots,d\}}: (\vk\mu_{u,T}(t,\vk x))_i > 0} \leq h_{u,T}(L, \vk x)} \\
&&\leq \pk{\exists_{t\in[0,L]} \forall_{i\in \{1,\ldots,d\}}: \sup_{t\in[0,L]} (uA\hat{\vk Z}_{u,\tau}(t))_i + (\vk\mu_{u,T}(t,\vk x))_i > 0}.
\end{eqnarray*}
Corollary \ref{cor:weak_convergence_to_0} implies that $\sup_{t\in[0,L]} (uA\hat{\vk Z}_{u,\tau}(t))_i \to 0$, as $u\to\infty$, so for every $\vk x\in\R$ we have
\begin{align*}
\lim_{u\to\infty} h_{u,T}(L, \vk x) = \mathbb{I}\left\{\exists_{t\in[0,L]}:-\tfrac{1}{2}(AQA^{-1} \tilde{\vk a})_I-\vk x_I>\vk 0_{I}\right\} \cdot \mathbb{I}\{\vk x_{U}<\vk 0_{U}\} =: h(L,\vk x)
\end{align*}
Thanks to Lemma \ref{lem:dominant}, we may apply Lebesgue's dominated convergence theorem and obtain,
as $u\to\infty$,
\begin{align*}
\int_{\R^d}h_{u,T}(L,\vk x)\theta_{u,T}(\vk x)\td\vk x & \to \int_{\R^d}h(L,\vk x)\theta(\vk x)\td\vk x \\
& = \mathcal C(L) \cdot \frac{1}{\prod_{i\in I} \lambda_i(T)}\int_{\R^{|J|}} e^{-\frac{1}{2}(\vk x_J-\tilde{\vk c})^\top(\Sigma^{-1}(T))_{JJ}(\vk x_J-\tilde{\vk c})}e^{\frac{1}{2}\tilde{\vk c}^\top(\Sigma^{-1}(T))_{JJ}\tilde{\vk c}} \mathbb{I}_{\{\vk x_U<\vk 0_U\}}\td\vk x_J,
\end{align*}
with $\mathcal C(L)$ defined in \eqref{def:CL}. Finally, using Lemma~\ref{prop:proj1} yields
\begin{align*}
\frac{M_T(u,L)}{\pk{\vk X(T)-\vk c T>u\vk a}} \to \mathcal C(L),
\end{align*}
as $u\to\infty$. Repeating the reasoning from the proof of Lemma \ref{lem:dominant}, we conclude that $\mathcal C(L) \to \mathcal C$, as $L\to\infty$, with $\mathcal C$ defined in \eqref{def:C}.
\end{proof}

\begin{proof}[Proof of Lemma \ref{lem:gap}]
Define a sequence $\tau_k := T-kLu^{-2}$ and a constant $K(u)=[\ln^2(u)+1]$. Then
\bqny{
\frac{\pk{\exists_{t\in[T-Lu^{-2}\ln^2(u),T-Lu^{-2}]}\vk X(t)-\vk ct>\vk au}}{\pk{\vk X(T)-\vk cT>u\vk a}}\leq \sum_{k=1}^{K(u)}\frac{M_{\tau_k}(u,L)}{\pk{\vk X(T)-\vk cT>u\vk a}},
}
with $M_\tau(u,L)$ defined in \eqref{def:Mtau}. Similarly to \eqref{eq:M_T}, for any $\tau>0$ we have
\begin{align*}
M_\tau(u,L) = u^{-|I_\tau|}\int_{\R^d}h_{u,\tau}(L,\vk x)\varphi_\tau(\vk w)\td\vk x,
\end{align*}
with $h_{u,\tau}$ defined as in \eqref{def:h,theta}. Let $\tau_0$ be as in Lemma~\ref{lem:dominant}. Then $\tau_k \in[\tau_0,T]$ for all $k\in\{1,\ldots,K(u)\}$ we may apply Lemma~\ref{lem:dominant} and obtain
\begin{align*}
M_{\tau_k}(u,L) \leq u^{-|I_T|} \varphi_{\tau_k}(u\tilde{\vk a}(\tau_k)+\vk c \tau_k) \cdot \int_{\R^d} H(L, \vk x)\td\vk x
\end{align*}
for all $u$ large enough. Furthermore, according to Lemma~\ref{lem:dominant}, there exists a constant $C_1$ such that $\int_{\R^d} H(L, \vk x)\td\vk x \leq C_1$ for all $u$ large enough. Moreover, according to Lemma \ref{prop:proj1}, there exist $C_2>0$, such that $\pk{\vk X(T)-\vk cT>u\vk a}\geq C_2u^{-|I_T|}\varphi_T(\tilde{\vk a}u+\vk c T)$ for all $u$ large enough. We thus have
\bqny{
\frac{\pk{\exists_{t\in[T-Lu^{-2}\ln^2(u),T-Lu^{-2}]}\vk X(t)-\vk ct>\vk au}}{\pk{\vk X(T)-\vk cT>u\vk a}}\leq \frac{C_1}{C_2}\sum_{k=1}^{K(u)}\frac{\varphi_{\tau_k}(u\tilde{\vk a}(\tau_k)+\vk c \tau_k)}{\varphi_T(u\tilde{\vk a}(T)+\vk c T)}.
}
Consider one of the terms in the sum above. We have
\begin{align*}
\frac{\varphi_{\tau_k}(u\tilde{\vk a}(\tau_k)+\vk c \tau_k)}{\varphi_T(u\tilde{\vk a}(T)+\vk c T)} = \sqrt{\frac{|\Sigma(T)|}{|\Sigma(\tau_k)|}} \cdot e^{-\frac{u^2}{2}(D(\tau_k)-D(T))}e^{-u\sprod{\vk\lambda(\tau_k)-\vk\lambda(T), \vk c}}e^{-\frac{1}{2}c^\top(\Sigma^{-1}(\tau_k)-\Sigma^{-1}(T))c}.
\end{align*}
Now, using the fact that $D(t)$ has a negative derivative at $t=T$ (cf. Lemma \ref{prop:D}), and the fact that $\tau_{K(u)}\to T$ we know that for $u$ large enough $u^2(D(\tau_k)-D(T))\geq |\dot D(T)|kL/2$. Using Lipshitz continuity of $\lambda_i$ we also find that there exists a constant $C_1>0$ such that $|\sprod{\lambda_i(\tau_k)-\lambda_i(T), \vk c}|\leq C_1kL$. Both these observations combined imply that there exist some constants $C_3$, $\beta>0$ such that
\begin{align*}
\frac{\varphi_{\tau_k}(u\tilde{\vk a}(\tau_k)+\vk c \tau_k)}{\varphi_T(u\tilde{\vk a}(T)+\vk c T)} \leq C_3 e^{-\beta kL}
\end{align*}
for all $u$ large enough. Finally, we have
\begin{align*}
\sum_{k=1}^{K(u)}\frac{M_{\tau_k}(u,L)}{\pk{\vk X(T)-\vk cT>u\vk a}} & \leq \frac{C_1C_3}{C_2} \sum_{k=1}^{\infty} e^{-\beta kL} \leq \frac{C_1C_3}{C_2(1-e^{-\beta L})} \cdot e^{-\beta L},
\end{align*}
which completes the proof.
\end{proof}

\section*{Acknowledgements}

We are in debt to Enkelejd Hashorva for stimulating discussions and suggestions that
substantially improved the manuscript and to
Pavel Ievlev for pointing out errors in earlier version of this manuscript.
Krzysztof Bisewski's and Nikolai Kriukov's research was funded by SNSF Grant 200021-196888.
Krzysztof D{\c{e}}bicki was partially supported by NCN Grant No 2018/31/B/ST1/00370
(2019-2022).

\bibliographystyle{ieeetr}

\bibliography{SGR}

\def\polhk#1{\setbox0=\hbox{#1}{\ooalign{\hidewidth
  \lower1.5ex\hbox{`}\hidewidth\crcr\unhbox0}}}
\begin{thebibliography}{10}

\bibitem{MR789369}
S.~M. Berman, ``An asymptotic formula for the distribution of the maximum of a
  {G}aussian process with stationary increments,'' {\em J. Appl. Probab.},
  vol.~22, no.~2, pp.~454--460, 1985.

\bibitem{DeR02}
K.~D{\c{e}}bicki and T.~Rolski, ``A note on transient {G}aussian fluid
  models,'' {\em Queueing Systems}, vol.~41, no.~4, pp.~321--342, 2002.

\bibitem{DHM19}
K.~D\c{e}bicki, E.~Hashorva, and Z.~Michna, ``Simultaneous ruin probability for
  two-dimensional brownian risk model,'' {\em Journal of Applied Probability},
  vol.~57, no.~2, pp.~597--612, 2020.

\bibitem{KoW20}
D.~Korshunov and L.~Wang, ``Tail asymptotics for {S}hepp-statistics of
  {B}rownian motion in ${R}^d$,'' {\em Extremes}, vol.~23, no.~1, pp.~35--54,
  2020.

\bibitem{DHK21}
K.~D{\polhk{e}}bicki, E.~Hashorva, and N.~Kriukov, ``Pandemic-type failures in
  multivariate {B}rownian risk models,'' {\em arXiv:2008.07480}, 2021.

\bibitem{Ji18}
L.~Ji and S.~Robert, ``Ruin problem of a two-dimensional fractional {B}rownian
  motion risk process,'' {\em Stochastic Models}, vol.~34, no.~1, pp.~73--97,
  2018.

\bibitem{FPP08}
F.~Avram, Z.~Palmowski, and M.~R. Pistorius, ``Exit problem of a
  two-dimensional risk process from the quadrant: exact and asymptotic
  results,'' {\em The Annals of Applied Probability}, vol.~18, no.~6,
  pp.~2421--2449, 2008.

\bibitem{HuJ13}
Z.~Hu and B.~Jiang, ``On joint ruin probabilities of a two-dimensional risk
  model with constant interest rate,'' {\em Journal of Applied Probability},
  vol.~50, no.~2, pp.~309--322, 2013.

\bibitem{FPR17}
S.~Foss, D.~Korshunov, Z.~Palmowski, and T.~Rolski, ``Two-dimensional ruin
  probability for subexponential claim size,'' {\em Probability and
  Mathematical Statistics}, vol.~37, no.~2, pp.~319--335, 2017.

\bibitem{Mic20}
Z.~Michna, ``Ruin probabilities for two collaborating insurance companies,''
  {\em Probability and Mathematical Statistics}, vol.~40, no.~2, pp.~369--386,
  2020.

\bibitem{DHW20}
K.~D{\c{e}}bicki, E.~Hashorva, and L.~Wang, ``Extremes of vector-valued
  {G}aussian processes,'' {\em Stochastic Processes and their Applications},
  vol.~130, no.~9, pp.~5802--5837, 2020.

\bibitem{HHJT15}
K.~D{\c{e}}bicki, E.~Hashorva, L.~Ji, and K.~Tabi{\'s}, ``Extremes of
  vector-valued {G}aussian processes: Exact asymptotics,'' {\em Stochastic
  Processes and their Applications}, vol.~125, no.~11, pp.~4039--4065, 2015.

\bibitem{Mic98}
Z.~Michna, ``Self-similar processes in collective risk theory,'' {\em Journal
  of Applied Mathematics and Stochastic Analysis}, vol.~11, no.~4,
  pp.~429--448, 1998.

\bibitem{HuP99}
J.~H{\"u}sler and V.~Piterbarg, ``Extremes of a certain class of {G}aussian
  processes,'' {\em Stochastic Processes and their Applications}, vol.~83,
  no.~2, pp.~257--271, 1999.

\bibitem{HuP04}
J.~H{\"u}sler and V.~Piterbarg, ``On the ruin probability for physical
  fractional {B}rownian motion,'' {\em Stochastic Processes and their
  Applications}, vol.~113, no.~2, pp.~315--332, 2004.

\bibitem{Deb02}
K.~D{\c{e}}bicki, ``Ruin probability for {G}aussian integrated processes,''
  {\em Stochastic Processes and their Applications}, vol.~98, no.~1,
  pp.~151--174, 2002.

\bibitem{debicki2020extremes}
K.~D{\c{e}}bicki, E.~Hashorva, and L.~Wang, ``Extremes of vector-valued
  {G}aussian processes,'' {\em Stochastic Processes and their Applications},
  vol.~130, no.~9, pp.~5802--5837, 2020.

\bibitem{Has05}
E.~Hashorva, ``Asymptotics and bounds for multivariate {G}aussian tails,'' {\em
  Journal of theoretical probability}, vol.~18, no.~1, pp.~79--97, 2005.

\bibitem{Has19}
E.~Hashorva, ``Approximation of some multivariate risk measures for {G}aussian
  risks,'' {\em Journal of Multivariate Analysis}, vol.~169, pp.~330--340,
  2019.

\bibitem{MR800188}
Y.~Gordon, ``Some inequalities for {G}aussian processes and applications,''
  {\em Israel J. Math.}, vol.~50, no.~4, pp.~265--289, 1985.

\bibitem{MR1088478}
R.~J. Adler, {\em An introduction to continuity, extrema, and related topics
  for general {G}aussian processes}, vol.~12 of {\em Institute of Mathematical
  Statistics Lecture Notes---Monograph Series}.
\newblock Institute of Mathematical Statistics, Hayward, CA, 1990.

\bibitem{Pit96}
V.~I. Piterbarg, {\em Asymptotic methods in the theory of {G}aussian processes
  and fields}, vol.~148 of {\em Translations of Mathematical Monographs}.
\newblock Providence, RI: American Mathematical Society, 1996.
\newblock Translated from the Russian by V.V. Piterbarg, revised by the author.

\bibitem{Hag79}
W.~W. Hager, ``Lipschitz continuity for constrained processes,'' {\em SIAM J.
  Control Optim.}, vol.~17, no.~3, pp.~321--338, 1979.

\end{thebibliography}

\end{document}